\documentclass[11pt]{article}

\usepackage{amsmath,amssymb,amsthm,mathrsfs}
\usepackage[margin=1in]{geometry}
\usepackage{hyperref}

\hypersetup{
  colorlinks=true,
  linkcolor=blue,
  citecolor=blue,
  urlcolor=blue
}

\newtheorem{theorem}{Theorem}[section]
\newtheorem{proposition}[theorem]{Proposition}
\newtheorem{lemma}[theorem]{Lemma}
\newtheorem{corollary}[theorem]{Corollary}
\theoremstyle{definition}
\newtheorem{definition}[theorem]{Definition}
\theoremstyle{remark}
\newtheorem{remark}[theorem]{Remark}
\theoremstyle{definition}
\newtheorem{example}[theorem]{Example}
\theoremstyle{plain}

\newcommand{\Hom}{\mathrm{Hom}}
\newcommand{\Irr}{\mathrm{Irr}}
\newcommand{\Z}{\mathrm{Z}}
\newcommand{\C}{\mathrm{C}}

\newcommand{\Prob}{\mathbf{P}}

\newcommand{\Comm}{\mathrm{Comm}}
\newcommand{\Aut}{\mathrm{Aut}}

\newcommand{\Fix}{\mathrm{Fix}}
\newcommand{\citedin}[1]{\newblock \emph{Cited in:} #1.}

\newcommand{\VecG}{\ensuremath{\mathrm{Vec}_{G}}}

\makeatletter
\def\@fnsymbol#1{%
  \ifcase#1\or
    \TextOrMath\textasteriskcentered \ast%
  \or
    \TextOrMath{\textasteriskcentered\textasteriskcentered}{\ast\ast}%
  \else
    \@ctrerr
  \fi
}

\makeatother

\newcommand{\Author}[2]{%
    {#1}\thanks{Email: \href{mailto:#2}{\texttt{#2}}}\\\Affiliation}

\newcommand{\Department}[1]{\def\DeptName{#1}}
\newcommand{\Faculty}[1]{\def\FacName{#1}}
\newcommand{\University}[1]{\def\UnivName{#1}}

\Department{Department of Mathematics}
\Faculty{Faculty of Natural Sciences}
\University{Ariel University, Ariel, Israel}

\author{
\Author{Vadim E. Levit}
{levitv@ariel.ac.il}
\and 
\Author{Robert Shwartz}
{robertsh@ariel.ac.il}
}

\title{Higher Commutativity in Finite Groups:\\[0.2em]
Rigidity, Extremal Bounds,\\
and Heisenberg-Type Families}

\date{}

\begin{document}
\maketitle

\begin{abstract}
For a finite group $G$ and an integer $r\ge 2$ let
\[
P_r(G):=\frac{|\Hom(\mathbb Z^r,G)|}{|G|^r},
\]
where $\Hom(\mathbb Z^r,G)$ is the set of pairwise commuting $r$-tuples in $G$. This paper studies rigidity and extremal behavior of the hierarchy $\{P_r(G)\}_{r\ge2}$, together with a low-rank representation-theoretic / TQFT counting bridge. The first main direction is cyclic-index rigidity: for groups with an abelian normal subgroup $A$ and cyclic quotient of order $\omega$, under a natural fixed-subgroup hypothesis we prove the exact all-rank formula
\[
P_r(G)=\frac{1}{\omega^r}+\left(1-\frac{1}{\omega^r}\right)\left(\frac{|A\cap Z(G)|}{|A|}\right)^{r-1},
\]
which yields gap and rigidity statements for non-abelian abelian extensions of prime index. The second main direction is the class-$2$ exponent-$p$ world. We develop a symplectic reduction, obtain closed formulas when $|G'|=p$, and prove a closed all-$r$ hierarchy in the $\mathbb F_q$-Heisenberg family:
\[
P_r(G)=q^{-2nr}\sum_{k=0}^{\min(n,r)}L_{n,k}(q)\prod_{i=0}^{k-1}(q^r-q^i).
\]
In particular, inside the $\mathbb F_q$-Heisenberg family the pair $(P_2(G),P_3(G))$ already determines the isoclinism class. Combining the cyclic-index formula with the known sharp upper bound for the multiple commutativity degree gives equality and near-extremal rigidity, including a stability gap near $11/32$ for commuting triples. At the low-rank end we also prove explicit class-number formulas for $P_3(G)$ and $P_4(G)$; these recover the simple-count formulas for the untwisted Drinfeld double and the untwisted quantum triple / double-loop-groupoid algebra.
\end{abstract}

\medskip
\noindent\textbf{Keywords:} finite groups; commuting probability; multiple commutativity degree; rigidity; extremal bounds; Heisenberg groups; Drinfeld double; quantum triple.

\medskip
\noindent\textbf{MSC 2020:} Primary 20P05; Secondary 20D60, 20D15, 20D25, 16T05.

\section{Introduction}

For a finite group $G$ and an integer $r\ge 2$ define
\[
P_r(G):=\frac{|\Hom(\mathbb Z^r,G)|}{|G|^r},
\qquad
\kappa_r(G):=\big|\Hom(\mathbb Z^r,G)/G\big|,
\]
where $G$ acts on $\Hom(\mathbb Z^r,G)$ by diagonal conjugation. Thus $P_2(G)=k(G)/|G|$, the classical commuting probability. This invariant goes back at least to Gustafson's theorem on the probability that two group elements commute and was developed structurally by Lescot and by Guralnick--Robinson \cite{Gustafson1973,Lescot1995,GuralnickRobinson2006}; in Lescot's notation $P_r(G)=d_{r-1}(G)$. The companion paper \cite{LevitShwartzPartI} develops the exact asymptotic and finite-spectrum theory of higher commutativity. The present paper has a different center of gravity: we study rigidity, extremal behavior, and exact low-rank formulas.

Higher commuting probabilities belong to a wider circle of probabilistic invariants of finite groups. Besides the classical commuting probability, recent work has also examined the probability that two random elements have the same centralizer and the probability that two random elements generate a nilpotent subgroup, again with structural consequences for the ambient group \cite{RahimiradZarrin2025,Lucchini2026}.

Generalized commuting probabilities attached to permutation equalities were studied by Cherniavsky et al. via Hultman numbers, and by Shwartz--Levit in the signed setting \cite{CherniavskyGoldsteinLevitShwartz2017,ShwartzLevit2022}. Our contribution in this landscape is to show that the full hierarchy $\{P_r(G)\}_{r\ge2}$ often behaves rigidly once one is near extremal values or inside natural structural families.

The paper has three main structural themes. The first is prime-index and cyclic-quotient rigidity. This direction is adjacent to ordinary commuting-degree computations for semidirect products of finite abelian groups, such as Nath's formula in a natural semidirect-product family \cite{Nath2013}. If $A\trianglelefteq G$ is abelian, $G/A\cong C_{\omega}$, and all nontrivial powers of a lift of a generator have the same fixed subgroup in $A$, then we prove the exact formula
\[
P_r(G)=\frac{1}{\omega^r}+\left(1-\frac{1}{\omega^r}\right)\left(\frac{|A\cap Z(G)|}{|A|}\right)^{r-1}.
\]
For prime index this yields immediate gap and rigidity statements, and on the extremal isoclinism class $G/Z(G)\cong C_p\times C_p$ it recovers the explicit hierarchy
\[
P_r(G)=\frac{p^r+p^{r-1}-1}{p^{2r-1}}.
\]

The second theme is extremal behavior near the top of the hierarchy. Here we recall, rather than claim as new, the known sharp upper bound for the multiple commutativity degree \cite[Prop.~2.4]{RezaeiNiroomandErfanian2014}. We include a compact proof only because the equality and deficit mechanisms are used in the rigidity statements that follow. In the form relevant here, if $p$ is the smallest prime divisor of $|G|$, then
\[
P_r(G)\le \frac{p^r+p^{r-1}-1}{p^{2r-1}},
\]
with equality if and only if $G/Z(G)\cong C_p\times C_p$. Using this known bound together with the prime-index formula, we derive a stability gap near $11/32$ for commuting triples and a $p$-group rigidity ladder that propagates near-extremal information through the hierarchy.

The third theme is the class-$2$ exponent-$p$ world. There the commutator induces an alternating form on $G/Z(G)$, so higher commuting probabilities can be studied via isotropic tuples in finite symplectic spaces. For $|G'|=p$ this yields a recursion and closed formulas for low ranks, together with a rigidity theorem saying that the full hierarchy determines isoclinism. More generally, for groups of $\mathbb F_q$-Heisenberg type of rank $n$ we prove the closed all-$r$ formula
\[
P_r(G)=q^{-2nr}\sum_{k=0}^{\min(n,r)}L_{n,k}(q)\prod_{i=0}^{k-1}(q^r-q^i),
\]
where $L_{n,k}(q)$ counts totally isotropic $k$-subspaces of $\mathbb F_q^{2n}$. In particular,
\[
m(G)=q^{n+1},
\qquad
N_{\max}(G)=\prod_{i=1}^{n}(q^i+1),
\]
and already $P_2(G)$ and $P_3(G)$ determine the isoclinism class inside this family.

A complementary low-rank theme runs through the paper. We prove exact class-number formulas for $P_3(G)$ and $P_4(G)$; in particular,
\[
|G|^2P_3(G)=|\Irr(D(G))|,
\qquad
|G|^3P_4(G)=|\Irr(\mathbb C[\Lambda^2G])|.
\]
These identities give a representation-theoretic counting bridge between higher commutativity and the untwisted finite-gauge / Dijkgraaf--Witten side of topological field theory \cite{DijkgraafWitten1990,FreedQuinn1993,SchroederTongViet2025}. They are used here as counting interpretations rather than as new tensor-categorical constructions, but they help place the low-rank invariants in a broader algebraic-topological context; recent work of Schroeder and Tong--Viet explicitly treats finite-group invariants from Dijkgraaf--Witten theory as higher-genus analogues of commuting probability \cite{SchroederTongViet2025}.

In summary, Part~I treats higher commutativity as an asymptotic/spectral invariant, whereas Part~II treats it as a rigidity/extremal invariant. Sections~2 and~3 record only the standing notation and the recursion facts imported from the companion paper \cite{LevitShwartzPartI}. Section~4 develops the low-rank class-number formulas, the loop-groupoid counting interpretations, and the prime-index rigidity statements. Sections~5 and~6 organize the extremal bounds and the $11/32$ stability gap. Section~7 develops the class-$2$ exponent-$p$ and Heisenberg-family theory, and Section~8 concludes.
\section{Standing notation and isoclinism}

For a finite group $G$ and an integer $r\ge 1$ define
\[
\Comm_r(G):=\{(x_1,\dots,x_r)\in G^r : [x_i,x_j]=1\ \forall i,j\},
\qquad
P_r(G):=\frac{|\Comm_r(G)|}{|G|^r}.
\]
Equivalently, $\Comm_r(G)=\Hom(\mathbb Z^r,G)$ and $P_r(G)=|\Hom(\mathbb Z^r,G)|/|G|^r$.
As in Lescot's notation, $P_r(G)=d_{r-1}(G)$ for every $r\ge 2$ \cite{Lescot1995,RezaeiNiroomandErfanian2014}.
The companion paper \cite{LevitShwartzPartI} develops the full asymptotic/spectral picture and proves several basic formal properties of the hierarchy.  Here we retain only the pieces that are used explicitly below.

\subsection{Isoclinism and invariance of \texorpdfstring{$P_r$}{Pr}}\label{subsec:isoclinism}

It is natural to regard the family $\{P_r(G)\}_{r\ge 2}$ as a ``commutator-geometry'' invariant of $G$.
This is made precise by \emph{isoclinism}, introduced by P.~Hall \cite{Hall1940} and used systematically by
Lescot in the study of commuting probabilities \cite{Lescot1995}.

\begin{definition}[Isoclinism \cite{Hall1940}]\label{def:isoclinism}
Two groups $G,H$ are \emph{isoclinic} if there exist isomorphisms
\[
\phi:G/\Z(G)\xrightarrow{\ \sim\ } H/\Z(H),
\qquad
\psi:G'\xrightarrow{\ \sim\ } H',
\]
such that the commutator maps are intertwined:
\[
\psi\big([g_1,g_2]\big)=\big[\widehat{\phi}(g_1\Z(G)),\,\widehat{\phi}(g_2\Z(G))\big]
\quad\text{for all }g_1,g_2\in G,
\]
where $\widehat{\phi}(g\Z(G))$ denotes any lift of $\phi(g\Z(G))$ to $H$.
Equivalently, if we write
\[
\kappa_G:(G/\Z(G))^2\to G',\qquad (g\Z(G),h\Z(G))\mapsto [g,h],
\]
(and similarly $\kappa_H$), then $\psi\circ \kappa_G=\kappa_H\circ(\phi\times\phi)$.
\end{definition}

\begin{proposition}[Isoclinism invariance of higher commuting probabilities]\label{prop:isoclinism-invariant}
If $G$ and $H$ are isoclinic finite groups, then
\[
P_r(G)=P_r(H)\qquad\text{for all }r\ge 2.
\]
In particular, the entire sequence $\{P_r(G)\}_{r\ge 2}$ depends only on the isoclinism class of $G$.
\end{proposition}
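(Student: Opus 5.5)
The plan is to show that $P_r(G)$ can be computed entirely from the commutator map $\kappa_G:(G/\Z(G))^2\to G'$, which isoclinism preserves. Write $\bar G=G/\Z(G)$. Whether $[x_i,x_j]=1$ depends only on the cosets $x_i\Z(G),x_j\Z(G)$, and $[x_i,x_j]=1$ holds exactly when $\kappa_G(\bar x_i,\bar x_j)=1$. So the condition $(x_1,\dots,x_r)\in\Comm_r(G)$ depends only on $(\bar x_1,\dots,\bar x_r)\in\bar G^{\,r}$, and each admissible tuple of cosets has exactly $|\Z(G)|^r$ lifts. Hence
\[
|\Comm_r(G)|=|\Z(G)|^r\cdot\big|\{(\bar x_1,\dots,\bar x_r)\in \bar G^{\,r}:\kappa_G(\bar x_i,\bar x_j)=1\ \forall i,j\}\big|.
\]
Dividing by $|G|^r=|\Z(G)|^r|\bar G|^r$ gives
\[
P_r(G)=\frac{\big|\{\bar x\in\bar G^{\,r}:\kappa_G(\bar x_i,\bar x_j)=1\ \forall i,j\}\big|}{|\bar G|^{r}},
\]
a quantity that depends only on the pair $(\bar G,\kappa_G)$.

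Next, let $(\phi,\psi)$ be an isoclinism as in Definition~\ref{def:isoclinism}. The map $\phi^{\times r}:\bar G^{\,r}\to (H/\Z(H))^r$ is a bijection, and $|\bar G|=|H/\Z(H)|$. Because $\psi\circ\kappa_G=\kappa_H\circ(\phi\times\phi)$ and $\psi$ is injective with $\psi(1)=1$, we get $\kappa_G(\bar x_i,\bar x_j)=1$ if and only if $\kappa_H(\phi\bar x_i,\phi\bar x_j)=1$. So $\phi^{\times r}$ restricts to a bijection between the two sets of pairwise-commuting coset tuples. By the reduced formula above, $P_r(G)=P_r(H)$ for every $r$, and in fact for every $r\ge1$.

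Both steps are routine. The only point needing care is the well-definedness used in the first step: $[xz,yw]=[x,y]$ for $z,w\in\Z(G)$. This follows from the commutator identities, or directly because central factors cancel in $x^{-1}y^{-1}xy$. It is also what makes $\kappa_G$ well defined in the definition of isoclinism. With that checked, the final claim about the whole sequence $\{P_r(G)\}_{r\ge2}$ follows immediately.
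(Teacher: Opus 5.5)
Your proof is correct. The paper's own proof only cites its companion \cite[\S2.1]{LevitShwartzPartI}, and your argument is the standard one: you write $|\Comm_r(G)|$ as $|\Z(G)|^r$ times the number of pairwise-commuting tuples in $(G/\Z(G))^r$, then transport that set along $\phi$ using $\psi\circ\kappa_G=\kappa_H\circ(\phi\times\phi)$. This is the same lift-counting reduction the paper itself uses in Proposition~\ref{prop:class2-reduction}.
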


\begin{proof}
See \cite[\S2.1]{LevitShwartzPartI}.
\end{proof}

\section{Centralizer recursion and orbit counts}

\subsection{A recursion via centralizers}

\begin{lemma}[Centralizer recursion]\label{lem:recursion}
For every finite group $G$ and every integer $r\ge 2$,
\[
P_r(G)=\frac{1}{|G|^r}\sum_{x\in G}|\C_G(x)|^{r-1}P_{r-1}(\C_G(x)).
\]
Equivalently, grouping the preceding sum by conjugacy classes gives
\[
|G|^rP_r(G)=|G|\sum_{[g]\subseteq G}|\C_G(g)|^{r-2}P_{r-1}(\C_G(g)),
\]
where the sum runs over conjugacy classes in $G$.
\end{lemma}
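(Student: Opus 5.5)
The plan is to count $\Comm_r(G)$ by conditioning on the first coordinate. A tuple $(x_1,\dots,x_r)$ lies in $\Comm_r(G)$ exactly when $x_2,\dots,x_r$ all lie in $\C_G(x_1)$ and also commute pairwise. In other words, $(x_2,\dots,x_r)\in\Comm_{r-1}(\C_G(x_1))$. This gives a bijection
\[
\Comm_r(G)\;\longleftrightarrow\;\bigsqcup_{x\in G}\{x\}\times\Comm_{r-1}(\C_G(x)),
\]
and therefore
\[
|\Comm_r(G)|=\sum_{x\in G}|\Comm_{r-1}(\C_G(x))|.
\]
By definition, $|\Comm_{r-1}(H)|=|H|^{r-1}P_{r-1}(H)$ for any finite group $H$. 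For $r=2$ we have $P_1(H)=1$ under the convention $\Comm_1(H)=H$, which is consistent with the definition for $r\ge1$. Substituting $H=\C_G(x)$ and dividing by $|G|^r$ yields the first displayed identity.

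For the class-sum form, I will use that the summand $x\mapsto |\C_G(x)|^{r-1}P_{r-1}(\C_G(x))$ is constant on conjugacy classes. If $y=gxg^{-1}$, then $\C_G(y)=g\,\C_G(x)\,g^{-1}$ is isomorphic to $\C_G(x)$. Both the order and $P_{r-1}$ are isomorphism invariants, since conjugation carries commuting tuples to commuting tuples bijectively.

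Grouping the sum by classes, the class $[g]$ contributes
\[
|[g]|\cdot|\C_G(g)|^{r-1}P_{r-1}(\C_G(g)).
\]
Now apply the orbit–stabilizer relation $|[g]|=|G|/|\C_G(g)|$. This gives
\[
|G|\,|\C_G(g)|^{r-2}P_{r-1}(\C_G(g)),
\]
and multiplying the first identity through by $|G|^r$ produces the second.

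No step is a genuine obstacle. The only point needing care is the base case $r=2$, which requires the convention $P_1\equiv1$. In that case the formula reduces to $|G|^2P_2(G)=|G|\,k(G)$, which agrees with $P_2(G)=k(G)/|G|$ and serves as a sanity check.
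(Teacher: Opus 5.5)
Your proof is correct: conditioning on the first coordinate gives $|\Comm_r(G)|=\sum_{x\in G}|\Comm_{r-1}(\C_G(x))|$, with $P_1\equiv 1$ coming directly from the paper's definition of $P_r$ for $r\ge 1$, and grouping by classes via $|[g]|=|G|/|\C_G(g)|$ gives the second form. The paper gives no argument of its own here (it cites Lemma~3.1 of the companion paper), but it uses exactly these two steps in its proofs of Theorems~\ref{thm:P3formula} and~\ref{thm:P4formula}, so your approach is essentially the paper's.
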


\begin{proof}
See \cite[Lemma~3.1]{LevitShwartzPartI}.
\end{proof}

\subsection{Orbit-count normalization and integrality}

\begin{definition}[Higher class numbers]\label{def:kappa}
For $r\ge 0$ define
\[
\kappa_r(G):=\big|\Hom(\mathbb Z^r,G)/G\big|,
\]
where $G$ acts by diagonal conjugation on $\Hom(\mathbb Z^r,G)$.
By convention, $\kappa_0(G)=1$ and $\kappa_1(G)=k(G)$.
\end{definition}

\begin{theorem}[Burnside orbit lemma normalization]\label{thm:burnside}
For every finite group $G$ and every $r\ge 1$,
\[
\kappa_r(G)=|G|^r\,P_{r+1}(G)
\qquad\text{equivalently}\qquad
P_{r+1}(G)=\frac{\kappa_r(G)}{|G|^r}.
\]
In particular, $|G|^rP_{r+1}(G)$ is always an integer.
\end{theorem}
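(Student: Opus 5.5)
The plan is to apply Burnside's orbit-counting lemma to the diagonal conjugation action of $G$ on $X_r:=\Hom(\mathbb Z^r,G)=\Comm_r(G)$. Burnside gives
\[
\kappa_r(G)=\frac{1}{|G|}\sum_{g\in G}|\Fix_{X_r}(g)|.
\]
So the whole proof comes down to identifying the fixed-point sets and then counting.

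First, a tuple $(x_1,\dots,x_r)\in X_r$ is fixed by $g$ exactly when $g$ commutes with every $x_i$. Thus $\Fix_{X_r}(g)$ is the set of commuting $r$-tuples in $\C_G(g)$, so $|\Fix_{X_r}(g)|=|\Comm_r(\C_G(g))|$.

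Second, summing over $g$ gives
\[
\sum_{g\in G}|\Fix_{X_r}(g)|=\#\{(g,x_1,\dots,x_r)\in G^{r+1}: g \text{ and all } x_i \text{ pairwise commute}\}=|\Comm_{r+1}(G)|.
\]
The pairwise-commutation condition among the $x_i$ is already built into $X_r$, and the condition that $g$ commutes with each $x_i$ is exactly the fixed-point condition. Equivalently, this is the first form of Lemma~\ref{lem:recursion} at rank $r+1$, read as a count. Dividing by $|G|$ then yields
\[
\kappa_r(G)=\frac{|\Comm_{r+1}(G)|}{|G|}=|G|^{r}P_{r+1}(G).
\]

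Third, for $r=1$ the same argument reproduces the classical identity $k(G)=|G|P_2(G)$, which is consistent with the convention $\kappa_1(G)=k(G)$. The integrality claim is immediate, because $\kappa_r(G)$ is a cardinality.

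No step is a genuine obstacle. The only point needing care is the bookkeeping in the second step: one must recognize the tuple $(g,x_1,\dots,x_r)$ as an element of $\Comm_{r+1}(G)$ and make sure the definition of $\Comm$ requires commutation for all pairs $i,j$, so that $g$ is treated on equal footing with the $x_i$.
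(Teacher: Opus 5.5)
Your argument is correct: identifying $\Fix(g)$ with $\Comm_r(\C_G(g))$ and then recognizing $\sum_{g\in G}|\Comm_r(\C_G(g))|=|\Comm_{r+1}(G)|$ is exactly the Burnside orbit-counting computation that the theorem's name refers to. The paper gives no proof at this point and only cites Theorem~3.4 of the companion Part~I. Your write-up therefore supplies that standard argument, and you correctly note that it is the first form of Lemma~\ref{lem:recursion} at rank $r+1$ read as a count.
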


\begin{proof}
See \cite[Theorem~3.4]{LevitShwartzPartI}.
\end{proof}

\section{Low-rank formulas and prime-index rigidity}

\subsection{An explicit formula for \texorpdfstring{$P_3(G)$}{P3(G)}}
\begin{theorem}[Centralizer class-number formula]\label{thm:P3formula}
For every finite group $G$,
\[
P_3(G)=\frac{1}{|G|^3}\sum_{x\in G}|\C_G(x)|\,k(\C_G(x))
=\frac{1}{|G|^2}\sum_{[g]\subseteq G} k(\C_G(g)).
\]
Equivalently,
\[
|G|^2P_3(G)=\kappa_2(G)=\sum_{[g]}k(\C_G(g)).
\]
\end{theorem}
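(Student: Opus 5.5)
The plan is to specialize the centralizer recursion of Lemma~\ref{lem:recursion} to $r=3$ and then use $P_2(H)=k(H)/|H|$ for every finite group $H$. With $r=3$ the lemma gives
\[
P_3(G)=\frac{1}{|G|^3}\sum_{x\in G}|\C_G(x)|^{2}P_{2}(\C_G(x)).
\]
Substituting $P_2(\C_G(x))=k(\C_G(x))/|\C_G(x)|$ yields the first expression, $\frac{1}{|G|^3}\sum_{x}|\C_G(x)|\,k(\C_G(x))$.

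To make the argument self-contained I will also verify the recursion directly at this rank. The triple $(x,y,z)$ lies in $\Comm_3(G)$ exactly when $(y,z)$ is a commuting pair inside $\C_G(x)$. Hence
\[
|\Comm_3(G)|=\sum_{x\in G}|\Comm_2(\C_G(x))|,
\]
and by the classical class equation $|\Comm_2(H)|=|H|\,k(H)$.

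Next, group the sum by conjugacy classes. The summand $|\C_G(x)|\,k(\C_G(x))$ is constant on the class $[g]$ of $x$, because conjugate elements have conjugate, hence isomorphic, centralizers. The class has size $|G|/|\C_G(g)|$, so each class contributes $|G|\,k(\C_G(g))$. Therefore
\[
\sum_{x}|\C_G(x)|\,k(\C_G(x))=|G|\sum_{[g]}k(\C_G(g)),
\]
and dividing by $|G|^3$ gives the second expression. This is also the conjugacy-class form of Lemma~\ref{lem:recursion} with $r=3$.

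Finally, Theorem~\ref{thm:burnside} with $r=2$ gives $\kappa_2(G)=|G|^2P_3(G)$, which completes the chain of equalities. As an independent check, I can count orbits of commuting pairs $(x,y)$ under diagonal conjugation. Fibre over the class of the first coordinate: fixing a representative $g$, orbits with first coordinate conjugate to $g$ correspond bijectively to $\C_G(g)$-conjugacy classes in $\C_G(g)$. This gives $\kappa_2(G)=\sum_{[g]}k(\C_G(g))$ directly.

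There is no real obstacle. The only point needing care is the well-definedness of the grouping, namely that $k(\C_G(x))$ depends only on the class of $x$. This holds because $\C_G(hxh^{-1})=h\,\C_G(x)\,h^{-1}$.
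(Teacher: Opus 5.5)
Your proposal is correct and follows the paper's proof essentially step for step: you specialize Lemma~\ref{lem:recursion} to $r=3$, substitute $P_2(\C_G(x))=k(\C_G(x))/|\C_G(x)|$, group the sum by conjugacy classes so that each class contributes $|G|\,k(\C_G(g))$, and invoke Theorem~\ref{thm:burnside} with $r=2$. Your extra checks (the direct count $|\Comm_3(G)|=\sum_x|\Comm_2(\C_G(x))|$ and the fibrewise orbit count for $\kappa_2(G)$) are also valid. The orbit count is the $r=3$ case of the argument the paper gives later in Theorem~\ref{thm:iterated-loop}.
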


\begin{proof}
By Lemma \ref{lem:recursion} for $r=3$,  $P_3(G)=\frac{1}{|G|^3}\sum_{x\in G}|C_G(x)|^2 P_2(C_G(x))$.
Set $C:=\C_G(x)$, by using $P_2(H)=k(H)/|H|$, we obtain
\[
\begin{aligned}
|C|^2 P_2(C)
&= |C|^2\cdot \frac{k(C)}{|C|} \\
&= |C|\,k(C).
\end{aligned}
\]
Summing over $x$ yields the first identity.

For the second identity, group by conjugacy classes: if $[g]$ is a conjugacy class of size
$|[g]|$, then $|\C_G(g)|=|G|/|[g]|$, and $k(\C_G(x))$ is constant for $x\in[g]$.
Hence the contribution of the class $[g]$ to $\sum_{x\in G}|\C_G(x)|\,k(\C_G(x))$
equals
\[
|[g]|\cdot \frac{|G|}{|[g]|}\,k(\C_G(g))=|G|\,k(\C_G(g)).
\]
Divide by $|G|^3$ to obtain $P_3(G)=(1/|G|^2)\sum_{[g]}k(\C_G(g))$.
The final claim is Theorem~\ref{thm:burnside} for $r=2$.
\end{proof}

\begin{corollary}\label{dihedral-P3-lim}
Let $G=D_{2n}$ be the dihedral group of order $2n$. Then
\begin{itemize}
\item If $n$ is odd, then $P_3(G)=\frac{1}{8}+\frac{7}{8n^2}$.
\item If $n$ is even, then $P_3(G)=\frac{1}{8}+\frac{7}{2n^2}$.
\item In particular, $\lim_{n\to\infty}P_3(D_{2n})=\frac{1}{8}$.
\end{itemize}
\end{corollary}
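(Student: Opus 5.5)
The plan is to apply Theorem~\ref{thm:P3formula} directly, in the form $|G|^2P_3(G)=\sum_{[g]}k(\C_G(g))$ with $|G|=2n$. This reduces the statement to listing the conjugacy classes of $D_{2n}=\langle \rho,s\mid \rho^n=s^2=1,\ s\rho s=\rho^{-1}\rangle$, identifying each centralizer, and recording its class number. We use the standard facts $k(D_{2n})=(n+3)/2$ for $n$ odd and $k(D_{2n})=n/2+3$ for $n$ even. Everything then comes down to a finite bookkeeping computation, split by the parity of $n$.

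\emph{Case $n$ odd.} The classes are as follows.
\begin{itemize}
\item The identity class has centralizer $G$, contributing $(n+3)/2$.
\item There are $(n-1)/2$ rotation classes $\{\rho^{\pm i}\}$. Each has centralizer $\langle\rho\rangle\cong C_n$, so each contributes $n$.
\item The reflections form a single class. Its centralizer $\{1,s\}$ has class number $2$.
\end{itemize}
Summing gives $(n+3)/2+n(n-1)/2+2=(n^2+7)/2$. Dividing by $4n^2$ yields $1/8+7/(8n^2)$.

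\emph{Case $n$ even.} The classes are as follows.
\begin{itemize}
\item The two central classes $\{1\}$ and $\{\rho^{n/2}\}$ each have centralizer $G$, contributing $2(n/2+3)=n+6$.
\item There are $n/2-1$ noncentral rotation classes. Each has centralizer $\langle\rho\rangle$ and contributes $n$, for a total of $n(n-2)/2$.
\item The reflections split into two classes. Each reflection $s\rho^j$ has centralizer $\{1,\rho^{n/2},s\rho^j,s\rho^{j+n/2}\}\cong C_2\times C_2$, so each class contributes $4$, for a total of $8$.
\end{itemize}
Summing gives $(n^2+28)/2$. Dividing by $4n^2$ yields $1/8+7/(2n^2)$. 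The degenerate case $n=2$ (where $G$ is abelian) is consistent, since the formula then gives $1$. The limit statement is immediate from the two closed forms.

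The only step requiring care is the correct description of the centralizers. In particular, for $n$ even the reflection centralizer has order $4$ and the noncentral rotations have centralizer exactly $\langle\rho\rangle$. I would verify both points by the relation $s\rho^j s=\rho^{-j}$: the rotation $\rho^j$ commutes with $s$ iff $\rho^{2j}=1$. This check is the main place an error could creep in.
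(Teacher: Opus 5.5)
Your proposal is correct and is essentially the paper's proof. Both apply Theorem~\ref{thm:P3formula}, list the conjugacy classes of $D_{2n}$ separately for $n$ odd and $n$ even, and sum the class numbers of the centralizers. This gives $(n^2+7)/2$ and $(n^2+28)/2$ respectively, and dividing by $4n^2$ yields the two formulas. Your explicit centralizer check via $s\rho^j s=\rho^{-j}$ and the $n=2$ sanity check are harmless additions that the paper does not include.
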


\begin{proof}
Write
\[
D_{2n}=\langle a,b \mid a^n=b^2=1,\; bab=a^{-1}\rangle.
\]

\noindent\emph{Case 1: $n$ odd.}
The conjugacy classes are:
\begin{itemize}
\item $\{1\}$,
\item $\{a^k,a^{n-k}\}$ for $1\le k\le (n-1)/2$,
\item the class of reflections $\{a^k b:0\le k\le n-1\}$.
\end{itemize}
Moreover:
\begin{itemize}
\item For $1\le k\le n-1$, $\C_G(a^k)=\langle a\rangle\cong C_n$ is abelian of order $n$, so $k(\C_G(a^k))=n$.
\item For each reflection $a^k b$, $\C_G(a^k b)$ has order $2$ (hence is abelian), so $k(\C_G(a^k b))=2$.
\item $\C_G(1)=G$, and $k(G)=\frac{n-1}{2}+2=\frac{n+3}{2}$.
\end{itemize}
Therefore, by Theorem~\ref{thm:P3formula},
\[
P_3(D_{2n})
=\frac{1}{(2n)^2}\sum_{[g]\subseteq D_{2n}} k(\C_{D_{2n}}(g))
=\frac{1}{4n^2}\left( \frac{n(n-1)}{2}+2+\frac{n+3}{2}\right)
=\frac{n^2+7}{8n^2}
=\frac18+\frac{7}{8n^2}.
\]

\noindent\emph{Case 2: $n$ even.}
The conjugacy classes are:
\begin{itemize}
\item $\{1\}$ and $\{a^{n/2}\}$ (both central),
\item $\{a^k,a^{n-k}\}$ for $1\le k\le (n-2)/2$,
\item the two reflection classes $\{a^{2k}b:0\le k\le (n-2)/2\}$ and $\{a^{2k+1}b:0\le k\le (n-2)/2\}$.
\end{itemize}
Moreover:
\begin{itemize}
\item For $1\le k\le n-1$ with $k\ne n/2$, $\C_G(a^k)=\langle a\rangle$ is abelian of order $n$, so $k(\C_G(a^k))=n$.
\item For each reflection $a^k b$, $\C_G(a^k b)$ has order $4$ (hence is abelian), so $k(\C_G(a^k b))=4$.
\item $\C_G(1)=\C_G(a^{n/2})=G$, and $k(G)=\frac{n}{2}+3$.
\end{itemize}
Thus Theorem~\ref{thm:P3formula} yields
\[
P_3(D_{2n})
=\frac{1}{4n^2}\left( \frac{n(n-2)}{2}+8+2\left(\frac{n}{2}+3\right)\right)
=\frac{n^2+28}{8n^2}
=\frac18+\frac{7}{2n^2}.
\]
The limit statement is immediate.
\end{proof}

\subsection{The Drinfeld double}\label{subsec:double}

We briefly record the standard low-rank bridge between commuting triples and the untwisted Drinfeld double.
Throughout this subsection we work over $\mathbb C$.

\begin{definition}[Untwisted Drinfeld double $D(G)$]\label{def:drinfeld-double}
Let $G$ be a finite group and let $\mathbb C^G$ be the algebra of functions $G\to \mathbb C$.
The \emph{(untwisted) Drinfeld double} $D(G)$ \cite{DrinfeldICM,DPR,EGNO} is the semidirect (crossed) product Hopf algebra
\[
D(G):=\mathbb C^G \rtimes \mathbb C G,
\]
whose underlying vector space is $\mathbb C^G\otimes \mathbb C G$ and whose multiplication is
determined on the basis elements $\{\delta_g\otimes x: g,x\in G\}$ by
\[
(\delta_g\otimes x)\,(\delta_h\otimes y)=
\delta_{g,\,x h x^{-1}}\;(\delta_g\otimes xy).
\]
It is a finite-dimensional semisimple quasitriangular Hopf algebra whose representation category is braided
and is equivalent to the Drinfeld center $\mathcal Z(\VecG)$ \cite{DPR,EGNO}.
\end{definition}

\begin{remark}[How simples of $D(G)$ are labeled]\label{rem:double-simples}
A standard feature of $D(G)$ is the parametrization of its simple modules by pairs $([g],\rho)$ where
$[g]$ is a conjugacy class in $G$ and $\rho\in\Irr(\C_G(g))$ is an irreducible representation of the centralizer.
Concretely, the commutative semisimple subalgebra $\mathbb C^G$ has primitive idempotents $\delta_g$, so a finite-dimensional
$D(G)$-module decomposes according to conjugacy support, and the stabilizer of a homogeneous piece over $g$ is $\C_G(g)$.
See, for instance, \cite{DPR,Witherspoon,EGNO}.
\end{remark}

\begin{theorem}[Commuting triples and the quantum double]\label{thm:double}
For every finite group $G$,
\[
|\Irr(D(G))|
=\sum_{[g]\subseteq G} k(\C_G(g))
=|G|^2P_3(G).
\]
\end{theorem}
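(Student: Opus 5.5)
The second equality is exactly Theorem~\ref{thm:P3formula}, so the work lies in proving $|\Irr(D(G))|=\sum_{[g]}k(\C_G(g))$. I would prove this directly from the multiplication rule in Definition~\ref{def:drinfeld-double}, by computing the dimension of the center $Z(D(G))$. Since $D(G)$ is a finite-dimensional semisimple algebra over $\mathbb C$, Wedderburn's theorem makes the number of simple modules equal to $\dim Z(D(G))$. This route avoids relying only on the labeling in Remark~\ref{rem:double-simples}.

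The first step is to write a general element as $z=\sum_{g,x}c_{g,x}\,\delta_g\otimes x$ and impose commutation with the generators. Commuting with $\delta_h\otimes 1$ for all $h$ forces $c_{g,x}=0$ unless $xgx^{-1}=g$, i.e.\ $x\in\C_G(g)$. Commuting with $1\otimes y$ (here $1=\sum_h\delta_h$) for all $y$ then forces $c_{ygy^{-1},\,yxy^{-1}}=c_{g,x}$. Since these two families generate $D(G)$, the center has a basis indexed by the orbits of $G$ acting by simultaneous conjugation on the set of commuting pairs
\[
\{(g,x)\in G^2 : gx=xg\}.
\]
These orbits are exactly $\Hom(\mathbb Z^2,G)/G$, so $\dim Z(D(G))=\kappa_2(G)$.

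Finally I would apply Theorem~\ref{thm:burnside} with $r=2$, which gives $\kappa_2(G)=|G|^2P_3(G)$. Alternatively, sorting the orbits by the conjugacy class of the first coordinate $g$: for fixed $g$, the orbits correspond to $\C_G(g)$-conjugacy classes of $x\in\C_G(g)$, giving $\sum_{[g]}k(\C_G(g))$. This matches Theorem~\ref{thm:P3formula}.

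The main obstacle is the commutator computation with $1\otimes y$. One has to check carefully, from the stated product rule, that $(1\otimes y)(\delta_g\otimes x)(1\otimes y^{-1})=\delta_{ygy^{-1}}\otimes yxy^{-1}$, since conventions in the crossed product are easy to get backwards. Semisimplicity is taken as stated in the definition; it follows from Maschke-type arguments in characteristic $0$. As a cross-check, Remark~\ref{rem:double-simples} gives the count $\sum_{[g]}|\Irr(\C_G(g))|$ directly, via Clifford/induction theory for the crossed product.
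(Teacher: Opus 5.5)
Your proof is correct, but it takes a different route from the paper's. The paper gets the first equality in one line from the standard parametrization of simple $D(G)$-modules by pairs $([g],\rho)$ with $\rho\in\Irr(\C_G(g))$ (Remark~\ref{rem:double-simples}, imported from the literature): exactly $k(\C_G(g))$ simples lie over each class $[g]$. It then quotes Theorem~\ref{thm:P3formula}.

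You instead compute $\dim Z(D(G))$ from the product rule and apply Wedderburn. The convention check you flag does go through with the stated multiplication. One has $(\delta_g\otimes 1)(1\otimes x)=\delta_g\otimes x$, so your two families generate $D(G)$. One also has $(1\otimes y)(\delta_g\otimes x)(1\otimes y^{-1})=\delta_{ygy^{-1}}\otimes yxy^{-1}$. Hence the orbit sums over commuting pairs form a basis of the center, and $|\Irr(D(G))|=\kappa_2(G)$.

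Your route is self-contained. It needs only semisimplicity, which Definition~\ref{def:drinfeld-double} grants, and no Clifford or induction theory. It also lands directly on the orbit count $\kappa_2(G)$, so the link to $P_3$ can go through Theorem~\ref{thm:burnside}. Your sorting of orbits by first coordinate is exactly the $r=3$ case of the fiber argument in Theorem~\ref{thm:iterated-loop}.

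The paper's route gives finer information: an explicit labeling of the simples and the class each one lies over, which a center-dimension count cannot see. If you want both without citing the literature, combine Remark~\ref{rem:double-as-loopgroupoid} with Lemma~\ref{lem:groupoid-decomp}. Identifying $D(G)$ with $\mathbb C[\Lambda G]$ gives $D(G)\cong\bigoplus_{[g]}M_{|[g]|}(\mathbb C[\C_G(g)])$, which yields semisimplicity, the count, and the labeling all at once.
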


\begin{proof}
By Remark~\ref{rem:double-simples}, for each conjugacy class $[g]$ there are exactly $k(\C_G(g))$ simple
$D(G)$-modules lying over $[g]$, hence
\[
|\Irr(D(G))|=\sum_{[g]} k(\C_G(g)).
\]
The equality with $|G|^2P_3(G)$ is exactly Theorem~\ref{thm:P3formula}.
\end{proof}

\begin{remark}[A loop-groupoid viewpoint]\label{rem:double-as-loopgroupoid}
Let $\Lambda G$ be the loop groupoid of $G$, whose objects are elements of $G$ and whose morphisms are conjugations.
Up to the usual opposite-convention harmlessness, the groupoid algebra $\mathbb C[\Lambda G]$ identifies with the crossed product
$\mathbb C^G\rtimes \mathbb C G$ from Definition~\ref{def:drinfeld-double}; compare Willerton's loop-groupoid description of the twisted Drinfeld double \cite{Willerton2008}. In this sense, Theorem~\ref{thm:double} is the first low-rank case of the general iterated loop-groupoid identity proved later in Theorem~\ref{thm:iterated-loop}.
\end{remark}

\subsection{Commuting quadruples and the quantum triple}

In this subsection we use only the resulting class-number and simple-count identities; no tensor-categorical structure of the quantum triple is constructed or needed.
We now push the commuting-triple dictionary one step further, from $P_3(G)$ to $P_4(G)$. We use the term
``quantum triple'' in the sense common in the Dijkgraaf--Witten / topological-phases literature \cite{BullivantDelcamp2019,BullivantDelcampCrossCircle}.

\begin{theorem}[Centralizer class-number formula for commuting quadruples]\label{thm:P4formula}
For every finite group $G$,
\[
P_4(G)=\frac{1}{|G|^3}\sum_{[g]\subseteq G}~\sum_{[h]\subseteq \C_G(g)} k(\C_G(g,h)),
\]
equivalently
\[
|G|^3P_4(G)=\kappa_3(G)=\sum_{[g]\subseteq G}~\sum_{[h]\subseteq \C_G(g)} k(\C_G(g,h)),
\]
where, for each $G$-conjugacy class $[g]$, the inner sum runs over the $\C_G(g)$-conjugacy classes in $\C_G(g)$; equivalently, the total sum is over $G$-conjugacy orbits of commuting pairs $(g,h)$.
\end{theorem}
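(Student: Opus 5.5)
The plan is to mimic the proof of Theorem~\ref{thm:P3formula}, applying the centralizer recursion twice. Lemma~\ref{lem:recursion} with $r=4$ gives
\[
P_4(G)=\frac{1}{|G|^4}\sum_{x\in G}|\C_G(x)|^{3}P_{3}(\C_G(x)).
\]
Next I apply Theorem~\ref{thm:P3formula} to the group $C:=\C_G(x)$, which gives
\[
|C|^2P_3(C)=\sum_{[h]_C\subseteq C}k(\C_C(h)).
\]
Since $\C_C(h)=\C_G(x)\cap\C_G(h)=\C_G(x,h)$, each summand becomes
\[
|C|^3P_3(C)=|C|\sum_{[h]\subseteq \C_G(x)}k(\C_G(x,h)),
\]
where the inner sum runs over $C$-conjugacy classes.

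I then group the outer sum over $x\in G$ by $G$-conjugacy classes $[g]$, exactly as in Theorem~\ref{thm:P3formula}. This needs one observation: conjugating by $t\in G$ carries $\C_G(g)$ isomorphically onto $\C_G(tgt^{-1})$. It also maps $\C_G(g)$-classes to $\C_G(tgt^{-1})$-classes and sends $\C_G(g,h)$ to $\C_G(tgt^{-1},tht^{-1})$. Hence the inner sum is constant on $[g]$. The class $[g]$ then contributes
\[
|[g]|\cdot|\C_G(g)|\cdot S(g)=|G|\,S(g),
\qquad S(g):=\sum_{[h]\subseteq\C_G(g)}k(\C_G(g,h)).
\]
Dividing by $|G|^4$ gives the first displayed formula, and the identification with $\kappa_3(G)$ is Theorem~\ref{thm:burnside} with $r=3$.

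Finally, the reinterpretation as a sum over $G$-orbits of commuting pairs needs a short argument. The $G$-orbits on commuting pairs $(g,h)$ correspond bijectively to pairs consisting of a $G$-class $[g]$ (with a fixed representative $g$) and a $\C_G(g)$-class of $h\in\C_G(g)$. Indeed, every orbit meets $\{g\}\times\C_G(g)$, and two pairs $(g,h)$ and $(g,h')$ are $G$-conjugate iff some $t\in\C_G(g)$ conjugates $h$ to $h'$. This is the standard stabilizer-reduction argument and is only bookkeeping.

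I expect no step to be a genuine obstacle. The only point requiring care is the well-definedness in the grouping step, i.e.\ that the inner sum does not depend on the choice of representative of $[g]$. The conjugation-transport argument above handles this.
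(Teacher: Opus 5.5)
Your proposal is correct and follows essentially the same route as the paper: Lemma~\ref{lem:recursion} at $r=4$, Theorem~\ref{thm:P3formula} applied to $\C_G(x)$, the identity $\C_{\C_G(x)}(y)=\C_G(x,y)$, and then a regrouping of the sum. The only difference is bookkeeping. The paper first expands into a sum over all commuting pairs weighted by $|\C_G(x,y)|\,k(\C_G(x,y))$ and then groups by $G$-orbits of pairs via orbit--stabilizer. You instead use the class-grouped form of the $P_3$ formula and group the outer variable by $G$-classes, which is why you need the conjugation-transport and orbit-bijection arguments, and you supply both correctly.
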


\begin{proof}
Apply Lemma~\ref{lem:recursion} with $r=4$:
\[
|G|^4P_4(G)=\sum_{x\in G} |\C_G(x)|^3\,P_3(\C_G(x)).
\]
Fix $x\in G$.
Theorem~\ref{thm:P3formula} applied to the group $\C_G(x)$ gives
\[
|\C_G(x)|^3P_3(\C_G(x))=\sum_{y\in \C_G(x)} |\C_{\C_G(x)}(y)|\,k(\C_{\C_G(x)}(y)).
\]
But $\C_{\C_G(x)}(y)=\C_G(x,y)$, the common centralizer of $x$ and $y$ in $G$.
Summing over $x$ yields
\[
|G|^4P_4(G)=\sum_{\substack{(x,y)\in G^2\\ xy=yx}} |\C_G(x,y)|\,k(\C_G(x,y)).
\]
Now group the last sum by conjugacy orbits of commuting pairs.
If $(g,h)$ is a representative, then its orbit has size $|G|/|\C_G(g,h)|$, so its contribution is
\[
\frac{|G|}{|\C_G(g,h)|}\cdot |\C_G(g,h)|\,k(\C_G(g,h))=|G|\,k(\C_G(g,h)).
\]
Therefore
\[
|G|^4P_4(G)=|G|\sum_{[g]\subseteq G}~\sum_{[h]\subseteq \C_G(g)} k(\C_G(g,h)),
\]
and dividing by $|G|^4$ gives the claim.
\end{proof}

\paragraph{The double loop groupoid.}
Write $BG$ for the one-object groupoid with automorphism group $G$.
The double loop groupoid $\Lambda^2G:=\mathrm{Fun}(B\mathbb Z^2,BG)$ has objects the commuting pairs $(g,h)\in G^2$ and morphisms given by simultaneous conjugation.
The automorphism group at an object $(g,h)$ is the common centralizer $\C_G(g,h)$.

\begin{lemma}[Finite groupoid algebra decomposition]\label{lem:groupoid-decomp}
Let $\mathcal G$ be a finite groupoid. For each isomorphism class of objects $[x]\in \mathrm{Ob}(\mathcal G)/\cong$
choose a representative $x$.
Then there is a (noncanonical) $\mathbb C$-algebra isomorphism
\[
\mathbb C[\mathcal G]\ \cong\ \bigoplus_{[x]\in \mathrm{Ob}(\mathcal G)/\cong} M_{|[x]|}\big(\mathbb C[\Aut(x)]\big),
\]
where $|[x]|$ denotes the number of objects in the isomorphism class $[x]$.
Consequently,
\[
|\Irr(\mathbb C[\mathcal G])|
=\sum_{[x]\in \mathrm{Ob}(\mathcal G)/\cong} k(\Aut(x)).
\]
\end{lemma}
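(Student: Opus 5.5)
The plan is to reduce first to a single isomorphism class of objects, and then to treat that class as a matrix algebra over the group algebra of the automorphism group.

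\textbf{Reduction to connected components.} The groupoid algebra $\mathbb C[\mathcal G]$ has as basis the morphisms of $\mathcal G$, with product $f\cdot g=f\circ g$ when composable and $0$ otherwise. Morphisms only go between isomorphic objects, so the morphism set is a disjoint union over isomorphism classes $[x]$. The corresponding subspaces $\mathbb C[\mathcal G_{[x]}]$ multiply to zero pairwise. Each is a two-sided ideal with unit $\sum_{y\in[x]}\mathrm{id}_y$. Hence $\mathbb C[\mathcal G]\cong\bigoplus_{[x]}\mathbb C[\mathcal G_{[x]}]$, and it suffices to treat a connected groupoid.

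\textbf{The connected case.} Let the objects be $x=y_1,\dots,y_m$ with $m=|[x]|$, and set $H=\Aut(x)$. Choose isomorphisms $\gamma_i:x\to y_i$ with $\gamma_1=\mathrm{id}_x$; this choice is the source of noncanonicity. Every morphism $y_j\to y_i$ can be written uniquely as $\gamma_i h\gamma_j^{-1}$ with $h\in H$. Define
\[
\Phi(\gamma_i h\gamma_j^{-1}) := E_{ij}\otimes h\in M_m(\mathbb C)\otimes\mathbb C[H]\cong M_m(\mathbb C[H]).
\]
This is a bijection on bases, so it is a linear isomorphism. It is also multiplicative, since
\[
(\gamma_i h\gamma_j^{-1})(\gamma_k h'\gamma_l^{-1})=\delta_{jk}\,\gamma_i hh'\gamma_l^{-1},
\]
which matches $(E_{ij}\otimes h)(E_{kl}\otimes h')=\delta_{jk}E_{il}\otimes hh'$. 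Composability is exactly $j=k$, and the non-composable products vanish on both sides. This proves the decomposition.

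\textbf{Counting simples.} $M_m(R)$ is Morita equivalent to $R$. Concretely, $M_m(\mathbb C[H])\cong\bigoplus_{\rho\in\Irr(H)}M_{m\dim\rho}(\mathbb C)$ by Maschke and Wedderburn. Thus the number of simple modules of $M_m(\mathbb C[H])$ is $|\Irr(H)|=k(H)$. Simple modules of a finite direct sum of algebras are the simples of the individual summands, so summing over $[x]$ gives
\[
|\Irr(\mathbb C[\mathcal G])|=\sum_{[x]}k(\Aut(x)).
\]

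The only point requiring care is verifying that $\Phi$ respects products in the degenerate cases. These are the non-composable pairs, which are sent to $0$, and the fixed composition-order convention, where an opposite convention merely transposes the matrices. I therefore expect no real obstacle.
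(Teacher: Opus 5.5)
Your proposal is correct and is essentially the paper's proof: reduce to a connected component, fix transport isomorphisms $\gamma_i\colon x\to y_i$ with $\gamma_1=\mathrm{id}_x$, send a morphism $f\colon y_j\to y_i$ to $E_{ij}\otimes \gamma_i^{-1}f\gamma_j$ (the paper's map $\Theta$), check multiplicativity on basis elements, and count simples via the Morita equivalence between $M_m(\mathbb C[H])$ and $\mathbb C[H]$. Two points are only spelled out more fully in your version than in the paper's brief treatment: that each component is a unital two-sided ideal, and the explicit Wedderburn decomposition.
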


\begin{proof}
Since $\mathcal G$ is a groupoid, its connected components are exactly its isomorphism classes of objects.
So it suffices to treat the case when $\mathcal G$ is connected, i.e.\ $\mathrm{Ob}(\mathcal G)=[x]$ for some $x$.

Fix a base object $x$.
For each object $y\in [x]$ choose an isomorphism $\tau_y:x\to y$ with $\tau_x=\mathrm{id}_x$.
Let $n:=|[x]|$.
Define a linear map
\[
\Theta:\mathbb C[\mathcal G]\to M_n(\mathbb C[\Aut(x)])
\]
on the basis $\mathrm{Mor}(\mathcal G)$ by
\[
\Theta(f:y\to z)\ :=\ E_{z,y}\otimes\big(\tau_z^{-1}\circ f\circ \tau_y\big),
\]
where $E_{z,y}$ is the $(z,y)$ matrix unit indexed by the object set $[x]$, and
$\tau_z^{-1}\circ f\circ \tau_y\in\Aut(x)$.
A direct check shows that $\Theta$ is multiplicative:
if $f:y\to z$ and $g:z\to w$ are composable then
\[
\Theta(g)\Theta(f)
=(E_{w,z}\otimes \tau_w^{-1}g\tau_z)(E_{z,y}\otimes \tau_z^{-1}f\tau_y)
=E_{w,y}\otimes (\tau_w^{-1}g\tau_z)(\tau_z^{-1}f\tau_y)
=\Theta(g\circ f),
\]
and if $g$ and $f$ are not composable then both products are $0$.
Moreover, $\Theta$ is a bijection on bases: given $E_{z,y}\otimes a$ with $a\in\Aut(x)$, the inverse sends it to
the morphism $\tau_z\circ a\circ \tau_y^{-1}:y\to z$.
Thus $\Theta$ is an algebra isomorphism in the connected case, and taking the direct sum over components
yields the stated decomposition in general.

The formula for $|\Irr(\mathbb C[\mathcal G])|$ follows because $M_n(A)$ and $A$ have equivalent module categories,
so the number of simple modules of $M_n(\mathbb C[\Aut(x)])$ is $|\Irr(\mathbb C[\Aut(x)])|=k(\Aut(x))$.
\end{proof}

\begin{corollary}[Quantum triple count]\label{cor:quantumtriple}
For every finite group $G$,
\[
|\Irr(\mathbb C[\Lambda^2G])|=\kappa_3(G)=|G|^3P_4(G).
\]
Equivalently,
\[
P_4(G)=\frac{|\Irr(\mathbb C[\Lambda^2G])|}{|G|^3}.
\]
\end{corollary}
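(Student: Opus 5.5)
The plan is to apply Lemma~\ref{lem:groupoid-decomp} to the finite groupoid $\mathcal G=\Lambda^2G$ and then match the resulting sum with the class-number formula of Theorem~\ref{thm:P4formula}. Everything needed is already in place, so the argument amounts to correctly identifying the isomorphism classes and automorphism groups of $\Lambda^2G$.

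First, I will record the structure of $\Lambda^2G$ from its definition. The objects are the commuting pairs $(g,h)\in G^2$. A morphism $(g,h)\to(g',h')$ is an element $x\in G$ with $(xgx^{-1},xhx^{-1})=(g',h')$. Hence the isomorphism classes of objects are exactly the orbits of $G$ acting by diagonal conjugation on $\Hom(\mathbb Z^2,G)$. The automorphism group of $(g,h)$ is $\C_G(g,h)=\C_G(g)\cap\C_G(h)$. The groupoid is finite, so Lemma~\ref{lem:groupoid-decomp} applies and gives
\[
|\Irr(\mathbb C[\Lambda^2G])|=\sum_{[(g,h)]\in \Hom(\mathbb Z^2,G)/G} k(\C_G(g,h)).
\]

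Second, I will identify this sum with the right-hand side of Theorem~\ref{thm:P4formula}. That theorem already expresses $|G|^3P_4(G)=\kappa_3(G)$ as the sum of $k(\C_G(g,h))$ over $G$-orbits of commuting pairs. The only point to check is the parametrization of these orbits: each $G$-orbit of commuting pairs contains a pair whose first entry is a fixed representative $g$ of its class $[g]$. Two such pairs $(g,h)$ and $(g,h')$ are $G$-conjugate if and only if $h$ and $h'$ are conjugate under $\C_G(g)$. So the double sum over $[g]$ and $[h]\subseteq\C_G(g)$ is the sum over $\Hom(\mathbb Z^2,G)/G$, and this yields $|\Irr(\mathbb C[\Lambda^2G])|=\kappa_3(G)=|G|^3P_4(G)$. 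Dividing by $|G|^3$ gives the equivalent form.

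As an alternative route to $\kappa_3(G)$, one can also observe that $\sum_{[(g,h)]}k(\C_G(g,h))$ counts $G$-orbits of commuting triples. The orbits lying over the orbit of $(g,h)$ correspond to $\C_G(g,h)$-classes in $\C_G(g,h)$, and Theorem~\ref{thm:burnside} with $r=3$ then gives the equality. There is no real obstacle. The only step requiring care is the orbit bookkeeping that identifies isomorphism classes in $\Lambda^2G$ with the index set of Theorem~\ref{thm:P4formula}, which is a routine stabilizer argument.
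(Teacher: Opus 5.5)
Your proposal is correct and follows the paper's proof. You apply Lemma~\ref{lem:groupoid-decomp} to $\Lambda^2G$, identify its isomorphism classes with $G$-orbits of commuting pairs and its automorphism groups with $\C_G(g,h)$, and match the resulting sum with Theorem~\ref{thm:P4formula}; your explicit check that the double sum over $[g]$ and $\C_G(g)$-classes $[h]$ runs over $\Hom(\mathbb Z^2,G)/G$ spells out a step the paper leaves implicit, and your alternative Burnside route is the $r=4$ case of the fibre argument in Theorem~\ref{thm:iterated-loop}.
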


\begin{proof}
By Lemma~\ref{lem:groupoid-decomp}, the simple modules of $\mathbb C[\Lambda^2G]$ are counted by the sum of
$k(\C_G(g,h))$ over conjugacy-orbit representatives of commuting pairs $(g,h)$.
That is,
\[
|\Irr(\mathbb C[\Lambda^2G])|=\sum_{[g]\subseteq G}~\sum_{[h]\subseteq \C_G(g)} k(\C_G(g,h)).
\]
Theorem~\ref{thm:P4formula} identifies this sum with $|G|^3P_4(G)$.
\end{proof}

\begin{remark}[Topological meaning]\label{rem:crosscircle}
In untwisted Dijkgraaf--Witten theory one has
\[
Z_{\mathrm{DW}}(T^4)=\frac{|\Hom(\mathbb Z^4,G)|}{|G|}
=|G|^3P_4(G)
\]
\cite{DijkgraafWitten1990,FreedQuinn1993}. This viewpoint is compatible with the recent finite-group invariant program of Schroeder and Tong--Viet, where Dijkgraaf--Witten surface invariants are treated as higher-genus analogues of commuting probability \cite{SchroederTongViet2025}. Bullivant and Delcamp identify the same quantity with the simple-module count of the double loop-groupoid algebra, more generally for circle compactifications of surfaces \cite{BullivantDelcampCrossCircle}. Thus Corollary~\ref{cor:quantumtriple} is the untwisted $4$-torus counting formula in this language.
\end{remark}

\begin{theorem}[Iterated loop groupoids and higher commuting probabilities]\label{thm:iterated-loop}
Let $G$ be a finite group and let $r\ge 2$.
Consider the $(r{-}2)$-fold loop groupoid
\[
\Lambda^{r-2}G := \mathrm{Fun}(B\mathbb Z^{r-2}, BG),
\]
whose objects are commuting $(r{-}2)$-tuples $\mathbf g=(g_1,\dots,g_{r-2})\in \Comm_{r-2}(G)$ and whose morphisms are simultaneous conjugations by $G$.
Write
\[
\C_G(\mathbf g):=\bigcap_{i=1}^{r-2}\C_G(g_i)
\]
for the joint centralizer, with the convention that $\Comm_0(G)$ is a singleton and $\C_G(\varnothing)=G$ when $r=2$.
Then
\[
|\Irr(\mathbb C[\Lambda^{r-2}G])|
=\sum_{[\mathbf g]\in \Comm_{r-2}(G)/G} k(\C_G(\mathbf g))
=\kappa_{r-1}(G)
=|G|^{r-1}P_r(G).
\]
\end{theorem}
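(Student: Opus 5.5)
The plan is to prove the three equalities in the chain separately and then concatenate them.

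\emph{First equality.} Apply Lemma~\ref{lem:groupoid-decomp} to the finite groupoid $\mathcal G=\Lambda^{r-2}G$. Its isomorphism classes of objects are exactly the diagonal-conjugation orbits $\Comm_{r-2}(G)/G$. The automorphism group of an object $\mathbf g$ is the set of $x\in G$ with $x g_i x^{-1}=g_i$ for all $i$, which is $\C_G(\mathbf g)$. The lemma then gives $|\Irr(\mathbb C[\Lambda^{r-2}G])|=\sum_{[\mathbf g]}k(\C_G(\mathbf g))$ at once. For $r=2$ the groupoid is $BG$ and the formula reads $k(G)$.

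\emph{Second equality.} I will count orbits of $G$ on $\Comm_{r-1}(G)$ fibrewise over the first $r-2$ coordinates. The projection $\Comm_{r-1}(G)\to\Comm_{r-2}(G)$, $(g_1,\dots,g_{r-1})\mapsto(g_1,\dots,g_{r-2})$, is $G$-equivariant. Its fibre over $\mathbf g$ is $\C_G(\mathbf g)$, since $g_{r-1}$ must commute with every $g_i$.

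The standard orbit-counting principle for equivariant maps says that the $G$-orbits in the preimage of the orbit $G\cdot\mathbf g$ correspond bijectively to the $\mathrm{Stab}_G(\mathbf g)$-orbits on the fibre over $\mathbf g$. I will verify this directly: two elements $(\mathbf g,y)$ and $(\mathbf g,y')$ are $G$-conjugate exactly when some $x\in\C_G(\mathbf g)$ conjugates $y$ to $y'$. Here the stabilizer is $\C_G(\mathbf g)$, acting on itself by conjugation, so the number of such orbits is $k(\C_G(\mathbf g))$. Summing over $[\mathbf g]$ gives $\kappa_{r-1}(G)=\sum_{[\mathbf g]}k(\C_G(\mathbf g))$.

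\emph{Third equality.} $\kappa_{r-1}(G)=|G|^{r-1}P_r(G)$ is Theorem~\ref{thm:burnside} applied with $r-1\ge1$.

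An alternative route to the second equality is to induct on $r$ using Lemma~\ref{lem:recursion}, as in the proofs of Theorems~\ref{thm:P3formula} and~\ref{thm:P4formula}. The fibration argument avoids that bookkeeping.

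The only point needing care is the orbit/fibre bijection in the second equality: that is, well-definedness and independence of the choice of orbit representative $\mathbf g$. Both follow because conjugating $\mathbf g$ by $x$ carries $\C_G(\mathbf g)$ isomorphically onto $\C_G(x\mathbf g x^{-1})$, and hence preserves $k$. The edge case $r=2$ is handled separately by the stated conventions.
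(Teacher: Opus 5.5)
Your proposal is correct and follows essentially the same route as the paper: Lemma~\ref{lem:groupoid-decomp} gives the first equality, the $G$-equivariant projection $\Comm_{r-1}(G)\to\Comm_{r-2}(G)$ with fibre $\C_G(\mathbf g)$ gives the second, and Theorem~\ref{thm:burnside} gives the third. Your explicit check that $G$-conjugacy within a fibre reduces to $\C_G(\mathbf g)$-conjugacy is exactly the point the paper's proof also relies on.
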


\begin{proof}
Objects of $\Lambda^{r-2}G$ are commuting $(r{-}2)$-tuples, and two such objects are isomorphic if and only if they are simultaneously conjugate by an element of $G$.
The isotropy group of $\mathbf g$ is its stabilizer for conjugation, namely the joint centralizer $\C_G(\mathbf g)$.
Therefore Lemma~\ref{lem:groupoid-decomp} gives
\[
|\Irr(\mathbb C[\Lambda^{r-2}G])|
=\sum_{[\mathbf g]\in \Comm_{r-2}(G)/G} k(\C_G(\mathbf g)).
\]

To identify the same sum with $\kappa_{r-1}(G)$, consider the $G$-equivariant projection
\[
\pi:\Comm_{r-1}(G)\to \Comm_{r-2}(G),\qquad (g_1,\dots,g_{r-2},t)\mapsto (g_1,\dots,g_{r-2}).
\]
Fix a $G$-orbit $[\mathbf g]\in \Comm_{r-2}(G)/G$ and choose a representative $\mathbf g$.
The fiber $\pi^{-1}(\mathbf g)$ consists of all $t\in G$ commuting with each $g_i$, namely $\pi^{-1}(\mathbf g)=\C_G(\mathbf g)$.
Moreover, the stabilizer of $\mathbf g$ is $\C_G(\mathbf g)$ and it acts on the fiber by conjugation.
Two points $(\mathbf g,t)$ and $(\mathbf g,t')$ in the fiber define $G$-conjugate commuting $(r{-}1)$-tuples if and only if $t$ and $t'$ are conjugate by an element of $\C_G(\mathbf g)$.
Hence the $G$-orbits in $\Comm_{r-1}(G)$ lying above $[\mathbf g]$ are in bijection with the conjugacy classes of $\C_G(\mathbf g)$, and their number is $k(\C_G(\mathbf g))$.
Summing over all $[\mathbf g]$ yields
\[
\kappa_{r-1}(G)=\sum_{[\mathbf g]\in \Comm_{r-2}(G)/G} k(\C_G(\mathbf g)).
\]
Finally, Theorem~\ref{thm:burnside} gives $\kappa_{r-1}(G)=|G|^{r-1}P_r(G)$.
\end{proof}

\begin{remark}
For $r=3$, Theorem~\ref{thm:iterated-loop} recovers Theorem~\ref{thm:double}; for $r=4$, it recovers Corollary~\ref{cor:quantumtriple}.
\end{remark}

Corollary \ref{dihedral-P3-lim} is a special case of the following exact formula, which in fact holds for all higher commutativity probabilities.

\begin{theorem}[Normal abelian cyclic index-$\omega$ formula]\label{prop:Pk-normal-index-p}
Let $G$ be a finite group and let $A\trianglelefteq G$ be an abelian normal subgroup such that $G/A$ is cyclic of order $\omega$.
Choose $t\in G$ whose image generates $G/A$, and set
\[
F:=\C_A(t)=A\cap \Z(G),\qquad f:=|F|,\qquad n:=|A|.
\]
Assume that
\[
\C_A(t^j)=F\qquad (1\le j\le \omega-1).
\]
Then for every $r\ge 1$,
\begin{equation}\label{eq:Comm-k-index-p}
|\Comm_r(G)|=n^r+(\omega^r-1)\,n\,f^{\,r-1}.
\end{equation}
Equivalently, for every $r\ge 1$,
\begin{equation}\label{eq:Pk-index-p}
P_r(G)
=\frac{1}{\omega^r}+\left(1-\frac{1}{\omega^r}\right)\left(\frac{f}{n}\right)^{r-1}.
\end{equation}
In particular, for $r=3$ one has
\[
P_3(G)=\frac{1}{\omega^3}+\left(1-\frac{1}{\omega^3}\right)\left(\frac{f}{n}\right)^2 .
\]
\end{theorem}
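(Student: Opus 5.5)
We count $\Comm_r(G)$ directly by splitting each commuting $r$-tuple according to the first coordinate that lies outside $A$. The facts needed are that every $x\in G\setminus A$ has $|\C_G(x)|=\omega f$, that $\C_G(x)\cap A=F$, and that $\C_G(x)$ is abelian. The identity $F=\C_A(t)=A\cap\Z(G)$ is immediate, since $A$ is abelian and $G=\langle A,t\rangle$.

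\emph{Step 1: the centralizer of a non-$A$ element.} Let $x=t^ja$ with $a\in A$ and $1\le j\le\omega-1$. Because $A$ is abelian, $\C_A(x)=\C_A(t^j)=F$ by hypothesis.

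To find $|\C_G(x)|$ we compute the conjugacy class of $x$. Write $A$ additively, with $t$ acting as an automorphism $\tau$. Conjugating $x$ by elements of $A$ gives the set $x\cdot I_j$, where $I_j=\mathrm{im}(\tau^j-1)$. The kernel of $\tau^j-1$ is $\C_A(t^j)=F$, so $|I_j|=n/f$. Conjugating by $t$ sends $x$ into $x\cdot I_1$. Since $\tau^j-1=(\tau-1)(1+\tau+\dots+\tau^{j-1})$, we have $I_j\subseteq I_1$, and both have size $n/f$, so $I_j=I_1$. Hence the $G$-class of $x$ is exactly $xI_j$, which has size $n/f$, and therefore $|\C_G(x)|=\omega f$.

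As $\C_G(x)\cap A=F$ has order $f$, the image of $\C_G(x)$ in $G/A$ has order $\omega$, i.e.\ it is all of $G/A$. Moreover $F$ is central in $G$ and $\C_G(x)/F$ embeds in the cyclic group $G/A$. A group that is cyclic modulo a central subgroup is abelian, so $\C_G(x)$ is abelian.

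\emph{Step 2: the count.} The tuples with all $x_k\in A$ contribute $n^r$. Otherwise let $i$ be the least index with $x_i\notin A$. Then:
\begin{itemize}
\item $x_i$ can be chosen in $(\omega-1)n$ ways;
\item $x_1,\dots,x_{i-1}$ lie in $\C_A(x_i)=F$, giving $f^{i-1}$ choices;
\item $x_{i+1},\dots,x_r$ are arbitrary elements of $\C_G(x_i)$, giving $(\omega f)^{r-i}$ choices.
\end{itemize}
Every such choice is genuinely a commuting tuple. The entries from $F$ are central. The later entries together with $x_i$ all lie in the abelian group $\C_G(x_i)$.

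Summing over $i$ gives
\[
n^r+(\omega-1)\,n\,f^{r-1}\sum_{i=1}^{r}\omega^{r-i}=n^r+(\omega^r-1)\,n\,f^{r-1},
\]
which is \eqref{eq:Comm-k-index-p}. Dividing by $|G|^r=\omega^r n^r$ gives \eqref{eq:Pk-index-p}, and $r=3$ is the special case.

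The main obstacle is Step 1, specifically showing that $t$-conjugation does not enlarge the class of $x$ beyond its $A$-orbit. This is exactly where the hypothesis $\C_A(t^j)=F$ for all $j$ is needed, through the equality $I_j=I_1$.
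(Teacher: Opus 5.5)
Your proof is correct. Step~1 is essentially the paper's argument seen from the orbit side. Both rest on $\operatorname{im}(\tau^j-1)=\operatorname{im}(\tau-1)$, obtained from the factorization together with the kernel hypothesis. The paper counts the $f$ solutions of $\delta_k(b)=\delta_\ell(a)$ in each coset $At^\ell$, whereas you compute the conjugacy class of $x$. The step from $txt^{-1}\in xI_1$ to ``the $G$-class is $xI_j$'' needs one more line: since $A\trianglelefteq G$, conjugation by $t$ carries the $A$-orbit of $x$ onto the $A$-orbit of $txt^{-1}$, which is the same orbit, so this orbit is stable under $\langle A,t\rangle=G$.

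The genuine difference is in Step~2. The paper conditions on the first coordinate via the centralizer recursion (Lemma~\ref{lem:recursion}). This needs the extra fact $\C_G(a)=A$ for $a\in A\setminus F$ and yields the recurrence $N_r=fN_{r-1}+(n-f)n^{r-1}+(\omega-1)n(\omega f)^{r-1}$; the closed form is then checked by induction on $r$. You condition instead on the first coordinate lying outside $A$. This bypasses the imported lemma, the centralizers of elements of $A\setminus F$, and the induction, and the closed form comes out directly as a geometric sum.

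Your decomposition also explains the shape of the answer. Here $\omega^r-1$ is the number of nonzero image patterns in $(G/A)^r$. Each such pattern is realized by exactly $nf^{r-1}$ commuting tuples, because $\C_G(x_i)$ meets every coset of $A$ in exactly $f$ elements. What the paper's route buys in exchange is consistency with the centralizer-recursion framework it uses elsewhere, e.g.\ in Lemma~\ref{lem:onestep} and Theorem~\ref{thm:Prbound}.
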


\begin{proof}
Fix $t\in G$ as in the statement, and let $\varphi\in \Aut(A)$ be conjugation by $t$.
Since $G/A\cong C_{\omega}$, the automorphism $\varphi$ has order dividing $\omega$.
Because $A$ is abelian and $G=\langle A,t\rangle$, we have
\[
F=\C_A(t)=A\cap \Z(G).
\]

Every element of $G$ is uniquely of the form $a t^\ell$ with $a\in A$ and $0\le \ell\le \omega-1$.
For $j\ge 0$ define
\[
\delta_j:A\to A,\qquad \delta_j(a):=\varphi^j(a)a^{-1}.
\]
Thus $\delta_0=1$, and for $1\le j\le \omega-1$ one has
\[
\ker(\delta_j)=\C_A(t^j)=F
\]
by hypothesis, so
\[
|\operatorname{im}(\delta_j)|=\frac{n}{f}.
\]
Moreover, for $j\ge 1$,
\[
\delta_j(a)=\delta_1(\varphi^{j-1}(a))\cdots \delta_1(\varphi(a))\,\delta_1(a),
\]
hence $\operatorname{im}(\delta_j)\subseteq \operatorname{im}(\delta_1)$.
Since both images have size $n/f$, it follows that
\[
\operatorname{im}(\delta_j)=\operatorname{im}(\delta_1)\qquad (1\le j\le \omega-1).
\]

We next record the centralizers needed for the counting argument.

If $a\in F$, then $a\in \Z(G)$ and $\C_G(a)=G$.
If $a\in A\setminus F$ and $y=b t^\ell\in G$ commutes with $a$, then
\[
a=yay^{-1}=t^\ell a t^{-\ell}=\varphi^\ell(a).
\]
If $\ell\ne 0$, this would force $a\in \C_A(t^\ell)=F$, a contradiction.
Hence $\ell=0$, so $y\in A$ and therefore
\[
\C_G(a)=A \qquad (a\in A\setminus F).
\]

Now fix $x=a t^k\in G\setminus A$, with $1\le k\le \omega-1$.
For $y=b t^\ell\in G$, one computes
\[
yx=b\,\varphi^\ell(a)\,t^{\ell+k},
\qquad
xy=a\,\varphi^k(b)\,t^{k+\ell}.
\]
Thus $y$ commutes with $x$ if and only if
\[
b\,\varphi^\ell(a)=a\,\varphi^k(b),
\]
equivalently,
\[
\delta_k(b)=\delta_\ell(a).
\]
For $\ell=0$ the right-hand side is $1$, so there are exactly $|\ker(\delta_k)|=f$ solutions $b$.
For $1\le \ell\le \omega-1$, the element $\delta_\ell(a)$ lies in $\operatorname{im}(\delta_\ell)=\operatorname{im}(\delta_k)$, so again there are exactly $|\ker(\delta_k)|=f$ solutions $b$.
Therefore each coset $At^\ell$ contributes exactly $f$ elements to $\C_G(x)$, and hence
\[
|\C_G(x)|=\omega f \qquad (x\in G\setminus A).
\]

Also,
\[
\C_G(x)\cap A=\C_A(x)=\C_A(t^k)=F,
\]
so the image of $\C_G(x)$ in $G/A$ has order
\[
\frac{|\C_G(x)|}{|\C_G(x)\cap A|}=\frac{\omega f}{f}=\omega.
\]
Hence $\C_G(x)/F\cong G/A\cong C_\omega$ is cyclic. Since $F\le \Z(G)$, it follows that $\C_G(x)$ is abelian.

Let $N_r:=|\Comm_r(G)|$.
Using the partition $G=F\sqcup (A\setminus F)\sqcup (G\setminus A)$, the centralizer descriptions above, and Lemma~\ref{lem:recursion}, we obtain
\begin{equation}\label{eq:Nk-recurrence}
N_r = f\,N_{r-1} + (n-f)\,n^{r-1} + (\omega-1)n\,(\omega f)^{r-1}.
\end{equation}

We claim that \eqref{eq:Comm-k-index-p} holds for all $r\ge 1$.
For $r=1$ one has
\[
N_1=|G|=\omega n=n+(\omega-1)n,
\]
which matches \eqref{eq:Comm-k-index-p}.
Assume \eqref{eq:Comm-k-index-p} holds for $r-1$.
Substituting
\[
N_{r-1}=n^{r-1}+(\omega^{r-1}-1)n f^{r-2}
\]
into \eqref{eq:Nk-recurrence} gives
\begin{align*}
N_r
&= f\bigl(n^{r-1}+(\omega^{r-1}-1)n f^{r-2}\bigr)+(n-f)n^{r-1}+(\omega-1)n(\omega f)^{r-1} \\
&= n^r+\bigl((\omega^{r-1}-1)+(\omega-1)\omega^{r-1}\bigr)n f^{r-1} \\
&= n^r+(\omega^r-1)n f^{r-1},
\end{align*}
proving \eqref{eq:Comm-k-index-p}.
Dividing by $|G|^r=(\omega n)^r$ yields \eqref{eq:Pk-index-p}.
\end{proof}

\begin{example}[Why the fixed-subgroup hypothesis is needed]\label{ex:cyclic-index-omega-nonfaithful}
Let
\[
G=\langle a,t \mid a^5=t^4=1,\ t a t^{-1}=a^{-1}\rangle,
\qquad
A:=\langle a\rangle\cong C_5.
\]
Then $A\trianglelefteq G$ is abelian and $G/A\cong C_4$ is cyclic.
Moreover
\[
F=\C_A(t)=1,
\qquad\text{but}\qquad
\C_A(t^2)=A,
\]
because $t^2$ acts trivially on $A$.
So the hypothesis $\C_A(t^j)=F$ for all $1\le j\le \omega-1$ fails.

This failure is genuine: a direct count gives
\[
P_2(G)=\frac{2}{5},
\]
whereas the formula of Theorem~\ref{prop:Pk-normal-index-p} with $\omega=4$, $n=5$, and $f=1$
would incorrectly predict
\[
\frac{1}{4^2}+\left(1-\frac{1}{4^2}\right)\frac{1}{5}=\frac14.
\]
Thus the equal-fixed-subgroup hypothesis in Theorem~\ref{prop:Pk-normal-index-p} cannot be omitted.
\end{example}

\begin{corollary}[metacyclic groups of order $p(p-1)$]\label{pp-1 groups}
Let $G$ be a non-abelian group of order $p(p-1)$ for a prime $p$, and let $A$ be its unique subgroup of order $p$.
Assume that $G/A$ is cyclic and that every element of $G\setminus A$ is noncentral.
Then for every $r\ge 2$,
\[
P_r(G)=\frac{1}{(p-1)^r}+\left(1-\frac{1}{(p-1)^r}\right)\frac{1}{p^{\,r-1}}.
\]
In particular,
\[
P_2(G)=\frac{1}{p-1}.
\]
\end{corollary}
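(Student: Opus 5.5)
The plan is to apply Theorem~\ref{prop:Pk-normal-index-p} with $\omega=p-1$, $n=p$, and $f=1$. Once its hypotheses are checked, the general formula follows by substitution:
\[
\left(\frac{f}{n}\right)^{r-1}=p^{-(r-1)}.
\]
For $r=2$ we get $\frac{1}{(p-1)^2}+\bigl(1-\frac{1}{(p-1)^2}\bigr)\frac1p$. Putting this over $p(p-1)^2$ gives numerator $p+(p-1)^2-1=p(p-1)$, so $P_2(G)=\frac{1}{p-1}$.

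\emph{Setting up.} Since $|G|=p(p-1)$, $A$ has order $p$, so it is a Sylow $p$-subgroup. It is unique by hypothesis, hence normal. It is also abelian (cyclic of prime order). We are given that $G/A\cong C_{p-1}$. Pick $t$ whose image generates $G/A$, and let $\varphi$ be conjugation by $t$ on $A$. Then $n=p$ and $\omega=p-1$.

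\emph{Identifying $F$.} $F=\C_A(t)=A\cap\Z(G)$ is a subgroup of $A$, so it is $1$ or $A$. If $F=A$, then $t$ centralizes $A$, so $G=\langle A,t\rangle$ is abelian, contradicting the hypothesis. Hence $f=1$.

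\emph{Fixed-subgroup hypothesis.} We must show $\C_A(t^j)=1$ for $1\le j\le p-2$. Here we use that every element of $G\setminus A$ is noncentral. If $\C_A(t^j)\ne 1$ for some such $j$, then $t^j$ centralizes all of $A$, since $A$ has prime order. Also $t^j$ commutes with $t$, and $G=\langle A,t\rangle$, so $t^j\in\Z(G)$. But $t^j\notin A$ because $j\not\equiv 0 \pmod{p-1}$, so this contradicts the noncentrality hypothesis. Hence $\C_A(t^j)=1=F$ for all $1\le j\le p-2$, and Theorem~\ref{prop:Pk-normal-index-p} applies.

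The main obstacle is the fixed-subgroup hypothesis. Example~\ref{ex:cyclic-index-omega-nonfaithful} shows it can fail, and the argument needs the dichotomy "$t^j$ acts on $A$ either trivially or fixed-point-freely," which holds because $|A|=p$ is prime. Combined with the noncentrality assumption, this dichotomy is exactly what rules out the bad case.
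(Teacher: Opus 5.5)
Your proposal is correct and follows essentially the same route as the paper. Both arguments show $A\trianglelefteq G$, deduce $f=1$ from non-abelianness, and combine the noncentrality hypothesis with $|A|=p$ prime to force $\C_A(t^j)=1$ for $1\le j\le p-2$. Both then apply Theorem~\ref{prop:Pk-normal-index-p} with $\omega=p-1$, $n=p$, $f=1$.

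The only difference is cosmetic. The paper proves that $A$ is unique: two distinct subgroups of order $p$ would give $|AB|=p^2>p(p-1)$. You instead take uniqueness from the wording of the statement, which is acceptable, though that one-line argument (or a Sylow count) would make it self-contained.
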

\begin{proof}
By Cauchy's theorem, $G$ contains a subgroup $A$ of order $p$.
If $A$ and $B$ were distinct subgroups of order $p$, then $A\cap B=\{1\}$ and
\[
|AB|=\frac{|A||B|}{|A\cap B|}=p^2,
\]
contradicting $|G|=p(p-1)<p^2$.
Thus $A$ is unique and therefore normal.

Choose $t\in G$ whose image generates the cyclic quotient $G/A\cong C_{p-1}$.
Since $G$ is non-abelian and $A$ has prime order, one has
\[
F:=A\cap \Z(G)=1;
\]
indeed, if $F\neq 1$ then $A\le \Z(G)$, and the cyclic quotient would force $G$ to be abelian.

Now fix $1\le j\le p-2$.
If $t^j$ centralized $A$, then $t^j$ would commute with both $A$ and $t$, hence would lie in $\Z(G)$.
But $t^j\notin A$ because $tA$ has order $p-1$ in $G/A$, contradicting the assumption that every element of $G\setminus A$ is noncentral.
Therefore the automorphism induced by $t^j$ on $A\cong C_p$ is nontrivial.
Any nontrivial automorphism of $C_p$ fixes only the identity, so
\[
\C_A(t^j)=1=F \qquad (1\le j\le p-2).
\]
Thus Theorem~\ref{prop:Pk-normal-index-p} applies with $\omega=p-1$, $n=p$, and $f=1$, yielding the displayed formula.

For $P_2(G)$ we obtain
\[
P_2(G)=\frac{1}{(p-1)^2}+\left(1-\frac{1}{(p-1)^2}\right)\frac{1}{p}
=\frac{1}{p-1}.
\]
\end{proof}

\begin{example}
    Let $G$ be the following non-abelian group of order 20.
    $$G=\langle a,b \mid a^5=b^4=1,\ bab^{-1}=a^2\rangle.$$
    Then the subgroup $A=\langle a\rangle$ is an abelian normal subgroup of $G$, such that $G/A\cong C_4$ and $Z(G)=\{1\}$. Thus, every $x\in G\setminus A$ is not a central element. Hence, applying Corollary \ref{pp-1 groups},
    $$P_r(G)=\frac{1}{4^r}+\left(1-\frac{1}{4^r}\right)\left(\frac{1}{5}\right)^{r-1},$$
    and
    $$P_2(G)=\frac{1}{4}.$$
\end{example}

\begin{corollary}[Normal abelian subgroup of prime index]\label{prop:Pk-normal-index-p1}
Let $G$ be a finite non-abelian group, and let $A\trianglelefteq G$ be an abelian normal subgroup such that $[G:A]=p$ is prime.
Choose $t\in G$ whose image generates $G/A$.
Then
\[
\C_A(t^i)=\C_A(t)=A\cap \Z(G)\qquad (1\le i\le p-1).
\]
Consequently, Theorem~\ref{prop:Pk-normal-index-p} applies to $G$.
\end{corollary}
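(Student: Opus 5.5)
The plan is to verify the equal-fixed-subgroup hypothesis of Theorem~\ref{prop:Pk-normal-index-p} directly, using only the primality of $p$. This is done in three steps.

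\emph{Step 1: $\C_A(t)=A\cap\Z(G)$.} Since $A$ is abelian and $G=\langle A,t\rangle$ (because $tA$ generates $G/A$), an element $a\in A$ is central if and only if it commutes with $t$.

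\emph{Step 2: $\C_A(t^i)=\C_A(t)$ for $1\le i\le p-1$.} Fix such an $i$. Since $\gcd(i,p)=1$, choose an integer $j$ with $ij\equiv 1\pmod p$. Then $t^{ij}=t\,t^{mp}$ for some integer $m$, and $t^p\in A$ because $G/A$ has order $p$. Let $\varphi$ denote conjugation by $t$ on $A$. Because $A$ is abelian, $t^p$ acts trivially on $A$, so $\varphi^p=\mathrm{id}$. Hence $\varphi^{ij}=\varphi$.
\begin{itemize}
\item If $a\in\C_A(t^i)$, then $\varphi^i(a)=a$, so $\varphi(a)=\varphi^{ij}(a)=(\varphi^i)^j(a)=a$, giving $a\in\C_A(t)$.
\item Conversely, $\C_A(t)\subseteq\C_A(t^i)$ is trivial.
\end{itemize}
Thus $\C_A(t^i)=\C_A(t)=A\cap\Z(G)$ for all $1\le i\le p-1$.

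\emph{Step 3: apply the theorem.} With $\omega=p$ and $F=A\cap\Z(G)$, the hypothesis of Theorem~\ref{prop:Pk-normal-index-p} holds verbatim, so that theorem applies to $G$. The non-abelian assumption is not needed for the argument; it only ensures $F\ne A$, so the conclusion is nontrivial.

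There is no genuine obstacle. The only point needing care is justifying $\varphi^p=\mathrm{id}$, which follows from $t^p\in A$ together with the commutativity of $A$.
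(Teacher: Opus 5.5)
Your proof is correct and follows the same route as the paper: identify $\C_A(t)$ with $A\cap\Z(G)$ via $G=\langle A,t\rangle$, then use $\gcd(i,p)=1$ to see that $\varphi^i$ and $\varphi$ generate the same cyclic subgroup of $\Aut(A)$ and hence have the same fixed points. Your explicit derivation of $\varphi^p=\mathrm{id}$ from $t^p\in A$ adds a bit of care. You are also right that the non-abelian hypothesis, which the paper uses only to get $\varphi$ of order exactly $p$, is not needed for this step.
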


\begin{proof}
Since $[G:A]=p$, the quotient $G/A$ is cyclic of order $p$.
Let $\varphi\in \Aut(A)$ be conjugation by $t$.
If $\varphi=\mathrm{id}$, then $t$ centralizes $A$ and $G=\langle A,t\rangle$ is abelian, contrary to hypothesis.
Hence $\varphi$ has order $p$.

For each $1\le i\le p-1$, the automorphism $\varphi^i$ generates the same cyclic subgroup of $\Aut(A)$ as $\varphi$, because $\gcd(i,p)=1$.
Therefore
\[
\Fix_A(\varphi^i)=\Fix_A(\varphi).
\]
But
\[
\Fix_A(\varphi)=\C_A(t)=A\cap \Z(G),
\]
so
\[
\C_A(t^i)=A\cap \Z(G)\qquad (1\le i\le p-1),
\]
as required.
\end{proof}

\begin{example}[Normality in Corollary~\ref{prop:Pk-normal-index-p1} cannot be dropped]\label{ex:S3-index3-formula-fails}
Let $G=S_3$ and let $A=\langle(12)\rangle\cong C_2$.
Then $A$ is abelian of prime index $3$, but it is not normal.
If one naively tried to extend Corollary~\ref{prop:Pk-normal-index-p1} to this situation using $p=3$, $n=2$, and $|A\cap Z(G)|=1$, one would obtain
\[
P_2(G)\stackrel{?}{=} \frac{1}{3^2}+\left(1-\frac{1}{3^2}\right)\frac12=\frac59.
\]
In fact
\[
P_2(S_3)=\frac{k(S_3)}{|S_3|}=\frac{3}{6}=\frac12.
\]
So the normality hypothesis in Corollary~\ref{prop:Pk-normal-index-p1} is essential.
\end{example}

\begin{corollary}[Normality of index-$p$ subgroups when $p$ is minimal]\label{cor:index-p-normality}
Let $G$ be a finite group with $|G|=pn$, where $p$ is the smallest prime divisor of $|G|$.
If $G$ contains a subgroup $A$ of index $p$, then $A\trianglelefteq G$.
If moreover $A$ is abelian, then the formula of Theorem~\ref{prop:Pk-normal-index-p} applies.
\end{corollary}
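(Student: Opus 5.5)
The plan has two parts: first show that $A$ is normal, then deduce that the cyclic-index formula applies.

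For normality we use the standard action on cosets. Let $G$ act by left multiplication on the set $G/A$ of left cosets of $A$. This set has $p$ elements, so the action gives a homomorphism $\rho:G\to S_p$. Its kernel $K=\bigcap_{g}gAg^{-1}$ is the normal core of $A$, and $K\le A$.

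The quotient $G/K$ embeds in $S_p$, so $[G:K]$ divides $p!$. It also divides $|G|$. Every prime divisor of $|G|$ is at least $p$, and the only such prime dividing $p!$ is $p$ itself, which divides $p!$ exactly once. Hence $[G:K]$ divides $p$. Since $K\le A$, we have $[G:K]\ge[G:A]=p$. Therefore $[G:K]=p=[G:A]$, which forces $K=A$, and $A$ is normal.

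For the second assertion, assume $A$ is abelian. If $G$ is non-abelian, then $A\trianglelefteq G$ is abelian of prime index $p$. Corollary~\ref{prop:Pk-normal-index-p1} gives $\C_A(t^i)=A\cap\Z(G)$ for $1\le i\le p-1$. So the hypotheses of Theorem~\ref{prop:Pk-normal-index-p} hold with $\omega=p$.

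If $G$ is abelian, then $F=A$ and $\C_A(t^j)=A=F$ for every $j$. The hypothesis holds trivially, and the formula gives $P_r=1$, which is correct.

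The main point needing care is the divisibility step. One must note that $\gcd(|G|,p!)$ can only involve primes $\le p$, and only $p$ among these divides $|G|$, with $p^2\nmid p!$. The rest is routine.
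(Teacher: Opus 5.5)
Your proof is correct and follows the same route as the paper: the coset action $\rho\colon G\to S_p$, the divisibility argument showing $[G:\ker\rho]=p$, and the identification $\ker\rho=A$. The only differences are that you get the lower bound $[G:K]\ge [G:A]=p$ from $K\le A$ where the paper uses transitivity, and that you spell out the abelian-$A$ clause (via Corollary~\ref{prop:Pk-normal-index-p1} in the non-abelian case, and directly when $G$ is abelian), which the paper leaves implicit.
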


\begin{proof}
    The action of $G$ on the left cosets $G/A$ yields a homomorphism $\rho\colon G\to S_p$.
Its image is transitive, hence $p\mid |\rho(G)|$.
Every prime divisor of $|\rho(G)|$ divides $|G|$, hence is at least $p$, while also dividing $p!$, hence is at most $p$.
Thus $\rho(G)$ is a $p$-group.
Since $p$ occurs only once in $p!$, we have $|\rho(G)|=p$ and $\ker(\rho)$ has index $p$.
Moreover $\ker(\rho)\subseteq A$, so $\ker(\rho)=A$ and $A\trianglelefteq G$.
\end{proof}

\begin{example}[The smallest-prime hypothesis is essential]\label{ex:index-p-not-normal}
Again let $G=S_3$ and $A=\langle(12)\rangle$.
Then $[G:A]=3$, but $3$ is not the smallest prime divisor of $|G|=6$, and indeed $A$ is not normal.
Thus Corollary~\ref{cor:index-p-normality} fails without the assumption that $p$ is the smallest prime divisor of $|G|$.
Equivalently, Theorem~\ref{thm:gap-index-p-abelian} cannot be extended to arbitrary abelian subgroups of prime index:
here
\[
|A\cap Z(G)|=1>\frac{|A|}{3}=\frac23.
\]
\end{example}

\begin{corollary}[All higher commuting probabilities for dihedral groups]\label{cor:dihedral-all-Pr}
Let $G=D_{2n}$. For every $r\ge 2$,
\[
P_r(D_{2n})
=\frac{1}{2^r}+\left(1-\frac{1}{2^r}\right)\left(\frac{f}{n}\right)^{r-1},
\]
where $f=1$ if $n$ is odd and $f=2$ if $n$ is even.
In particular, $\lim_{n\to\infty}P_r(D_{2n})=2^{-r}$ for each fixed $r$.
\end{corollary}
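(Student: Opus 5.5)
The plan is to apply Theorem~\ref{prop:Pk-normal-index-p} with $A=\langle a\rangle\cong C_n$, the rotation subgroup. It is abelian and normal of index $2$, so $\omega=2$. We may take $t=b$, whose image generates $G/A\cong C_2$. When $n\ge 3$ the group $D_{2n}$ is non-abelian, so Corollary~\ref{prop:Pk-normal-index-p1} with $p=2$ shows that the fixed-subgroup hypothesis $\C_A(t^j)=F$ holds. This is in any case trivial here, since the only index is $j=1$. For the degenerate cases $n\le 2$ the hypothesis is still vacuous-to-check: we only need $\C_A(b)=F$ with $j=1$, and Theorem~\ref{prop:Pk-normal-index-p} itself requires nothing more.

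Next we compute $F=\C_A(b)$. Since $ba^kb^{-1}=a^{-k}$, the element $a^k$ is fixed exactly when $a^{2k}=1$, that is, when $2k\equiv 0 \pmod n$. This has one solution ($k=0$) if $n$ is odd, and two solutions ($k=0,n/2$) if $n$ is even. Thus $f=1$ or $f=2$ respectively, and $F=A\cap\Z(G)$ as stated in the theorem. Substituting $\omega=2$ and $|A|=n$ into \eqref{eq:Pk-index-p} gives the displayed formula for every $r\ge 2$.

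For the limit, fix $r\ge 2$. Then $f\le 2$ gives $(f/n)^{r-1}\le (2/n)^{r-1}\to 0$, so $P_r(D_{2n})\to 2^{-r}$. As a sanity check, for $r=3$ this reproduces Corollary~\ref{dihedral-P3-lim}: $\tfrac18+\tfrac78\cdot\tfrac1{n^2}$ for odd $n$ and $\tfrac18+\tfrac78\cdot\tfrac4{n^2}=\tfrac18+\tfrac7{2n^2}$ for even $n$.

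The only step requiring any care is the identification of $F$. That is a one-line fixed-point computation, so I expect no real obstacle. The point worth flagging is the small cases $n=1,2$, where $G$ is abelian. There the formula should still agree, giving $P_r=1$ because $f=n$, so the statement holds uniformly provided Theorem~\ref{prop:Pk-normal-index-p} is invoked directly rather than through the non-abelian Corollary~\ref{prop:Pk-normal-index-p1}.
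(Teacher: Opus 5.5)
Your proposal is correct and matches the paper's proof: apply Theorem~\ref{prop:Pk-normal-index-p} with $\omega=2$, $A=\langle a\rangle$ and $t=b$, then compute $F=\C_A(b)$ to get $f=1$ or $f=2$ according to the parity of $n$. Your two extra remarks are accurate details the paper leaves implicit: for $\omega=2$ the fixed-subgroup hypothesis is just the definition of $F$, and the abelian cases $n\le 2$ are covered directly because $f=n$ there.
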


\begin{proof}
Apply Theorem~\ref{prop:Pk-normal-index-p} with $\omega=2$ and $A=\langle a\rangle\cong C_n$.
The fixed subgroup $F=\C_A(b)$ is trivial if $n$ is odd and has order $2$ if $n$ is even.
\end{proof}

\begin{corollary}[Groups of order $pq$]\label{cor:order-pq}
Let $p<q$ be primes and let $G$ be a group of order $pq$.
Then either $G$ is cyclic (hence abelian), in which case $P_r(G)=1$ for all $r\ge 2$,
or else $G$ is non-abelian and for every $r\ge 2$,
\[
P_r(G)=\frac{1}{p^r}+\left(1-\frac{1}{p^r}\right)\frac{1}{q^{\,r-1}}.
\]
In particular,
\[
P_2(G)=\frac{1}{p^2}+\left(1-\frac{1}{p^2}\right)\frac{1}{q},
\qquad
P_3(G)=\frac{1}{p^3}+\left(1-\frac{1}{p^3}\right)\frac{1}{q^{2}}.
\]
\end{corollary}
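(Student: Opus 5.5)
Let $q$ be the Sylow $q$-subgroup of $G$ and set $A:=Q$; the plan is to apply Corollary~\ref{prop:Pk-normal-index-p1} with this $A$ and then read off the parameters. The cyclic case is immediate: every group of order $pq$ with $p<q$ that is abelian is cyclic, and then $\Comm_r(G)=G^r$, so $P_r(G)=1$. Assume from now on that $G$ is non-abelian.

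\emph{Step 1: $A$ is normal, abelian, of prime index.} Since $q$ is the largest prime divisor of $|G|$, Sylow's theorem gives $n_q\equiv 1\pmod q$ and $n_q\mid p<q$. Hence $n_q=1$ and $Q\trianglelefteq G$. Alternatively, Corollary~\ref{cor:index-p-normality} applies directly, because $Q$ has index $p$ and $p$ is the smallest prime divisor of $|G|$. Since $|Q|=q$ is prime, $Q\cong C_q$ is abelian, and $[G:Q]=p$ is prime. Corollary~\ref{prop:Pk-normal-index-p1} therefore shows that Theorem~\ref{prop:Pk-normal-index-p} applies with $\omega=p$ and $n=q$.

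\emph{Step 2: compute $f=|A\cap \Z(G)|$.} This is the only point needing an argument. Since $A\cap\Z(G)$ is a subgroup of $A\cong C_q$, either $f=1$ or $f=q$. If $f=q$, then $A\le \Z(G)$. Because $G/A$ is cyclic, $G=\langle A,t\rangle$ is generated by central elements together with a single element $t$, so $G$ is abelian. This contradicts our assumption, so $f=1$.

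\emph{Step 3: substitute.} Putting $\omega=p$, $n=q$ and $f=1$ into \eqref{eq:Pk-index-p} gives
\[
P_r(G)=\frac{1}{p^r}+\left(1-\frac{1}{p^r}\right)\frac{1}{q^{r-1}}\qquad(r\ge 2).
\]
Taking $r=2$ and $r=3$ yields the two displayed special cases. No step here is a genuine obstacle; the only structural inputs are the normality of $Q$ and the observation $f=1$ from Step~2.
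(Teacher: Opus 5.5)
Your proof is correct and follows the paper's argument. The subgroup of order $q$ has index $p$ and is normal by the smallest-prime argument (or Sylow); it is abelian of prime order; non-abelianness forces $A\cap \Z(G)=1$; and substituting $\omega=p$, $n=q$, $f=1$ into the cyclic-index formula gives the result. There are two small differences: you explicitly invoke the prime-index corollary to check the equal-fixed-subgroup hypothesis, which the paper leaves implicit, and your first sentence has a typo and should read ``Let $Q$ be the Sylow $q$-subgroup''.
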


\begin{proof}
If $G$ is abelian, then it is cyclic of order $pq$ and $P_r(G)=1$ for all $r\ge 2$.
Assume $G$ is non-abelian.
By Cauchy's theorem, $G$ contains a subgroup $A$ of order $q$, hence of index $p$.
Since $p$ is the smallest prime divisor of $|G|$, Corollary~\ref{cor:index-p-normality} shows that $A\trianglelefteq G$.
Thus $A\cong C_q$ is abelian with $[G:A]=p$.
Because $G$ is non-abelian, $A$ is not central, so $A\cap \Z(G)=1$.
Now Theorem~\ref{prop:Pk-normal-index-p} with $n=|A|=q$ and $f=|A\cap \Z(G)|=1$ gives the stated formula.
\end{proof}

\begin{corollary}\label{pp+1 groups}
    Let $G$ be a non-abelian group of order $p(p+1)$ which contains a normal abelian subgroup $A$ of order $p+1$.
    Then, by Corollary~\ref{prop:Pk-normal-index-p1},
    \[
P_r(G)=\frac{1}{p^r}+\left(1-\frac{1}{p^r}\right)\frac{1}{(p+1)^{\,r-1}}.
\]
In particular,
\[
P_2(G)=\frac{1}{p}.
   \]
\end{corollary}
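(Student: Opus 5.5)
The plan is to apply Theorem~\ref{prop:Pk-normal-index-p} with $\omega=p$ and $n=|A|=p+1$, so the only real input is to show $f=|A\cap\Z(G)|=1$.

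First, $[G:A]=p$ is prime, $A$ is abelian and normal, and $G$ is non-abelian. So Corollary~\ref{prop:Pk-normal-index-p1} applies. For $t$ a lift of a generator of $G/A\cong C_p$ it gives $\C_A(t^i)=A\cap\Z(G)$ for $1\le i\le p-1$. This is exactly the hypothesis of Theorem~\ref{prop:Pk-normal-index-p}, and that theorem then yields
\[
P_r(G)=\frac1{p^r}+\Bigl(1-\frac1{p^r}\Bigr)\Bigl(\frac{f}{p+1}\Bigr)^{r-1}.
\]

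The main step is to show $F:=A\cap\Z(G)$ is trivial. Let $\varphi$ be conjugation by $t$; it has order exactly $p$, since $G$ is non-abelian. The orbits of $\langle\varphi\rangle$ on $A$ therefore have size $1$ or $p$. The fixed points are exactly $F$, so
\[
p+1=|A|\equiv |F| \pmod p .
\]
Since $F$ is a subgroup of $A$, $|F|$ divides $p+1$ and $|F|\equiv 1\pmod p$. Such a divisor is either $1$ or $p+1$. The value $p+1$ would give $F=A$, that is, $t$ centralizes $A$, which is impossible. Hence $f=1$.

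Substituting $f=1$ gives the stated formula. For $r=2$ a direct computation gives
\[
\frac1{p^2}+\frac{p^2-1}{p^2(p+1)}=\frac{1+(p-1)}{p^2}=\frac1p.
\]
I expect no real obstacle beyond the orbit-counting congruence that forces $f=1$.
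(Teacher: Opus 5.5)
Your proof is correct and follows essentially the same route as the paper. It invokes Corollary~\ref{prop:Pk-normal-index-p1} and Theorem~\ref{prop:Pk-normal-index-p} with $\omega=p$ and $n=p+1$, and it gets $f=1$ from the fact that the orbits of $\langle t\rangle$ on $A$ have size $1$ or $p$. The only difference is cosmetic: the paper picks a noncentral $a\in A$ and observes that its orbit of size $p$ together with $\{1\}$ already exhausts $A$, whereas you use the congruence $|A|\equiv|F|\pmod p$ together with $|F|\mid p+1$.
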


\begin{proof}
 Let $t\in G\setminus A$.
 Since $[G:A]=p$, we have $G=\bigcup_{k=0}^{p-1} t^kA$ and hence $G=\langle A,t\rangle$.
 Because $G$ is non-abelian, $A$ is not central, so choose $a\in A\setminus Z(G)$.
 The conjugation action of $t$ on $A$ has order dividing $p$, and the orbit of $a$ under this action has size $p$.
 Thus the $p$ elements
 \[
 \{t^{-k}at^k : 0\le k\le p-1\}
\]
 are distinct and lie in $A$.
 Since $|A|=p+1$, the only element of $A$ fixed by conjugation is $1$, so $A\cap Z(G)=1$.
 Therefore Corollary~\ref{prop:Pk-normal-index-p1} gives the formula for $P_r(G)$.
 The identity $P_2(G)=1/p$ follows by the same computation as in Corollary~\ref{pp-1 groups}.
\end{proof}

\begin{corollary}[Hierarchy collapse for normal abelian index-$\omega$ groups]\label{cor:hierarchy-collapse}
Assume the hypotheses of Theorem~\ref{prop:Pk-normal-index-p} and set $\alpha:=f/n$.
Then for every $r\ge 2$,
\[
P_r(G)=\frac{1}{\omega^r}+\left(1-\frac{1}{\omega^r}\right)\alpha^{r-1},
\qquad\text{and}\qquad
\alpha=\frac{P_2(G)-\omega^{-2}}{1-\omega^{-2}}.
\]
More generally, for each fixed $r\ge 2$ one can recover $\alpha^{r-1}$ from $P_r(G)$ via
\[
\alpha^{r-1}=\frac{P_r(G)-\omega^{-r}}{1-\omega^{-r}}.
\]
Hence, within this family of groups, the commuting hierarchy $\{P_r(G)\}_{r\ge 2}$ is completely determined by any single value $P_r(G)$ (and in particular by $P_2(G)$).
\end{corollary}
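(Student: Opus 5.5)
The plan is to read everything off from Theorem~\ref{prop:Pk-normal-index-p} and then invert the resulting affine relation. Since the hypotheses of that theorem are assumed, formula \eqref{eq:Pk-index-p} holds for every $r\ge 1$. Writing $\alpha=f/n$, that formula for $r\ge 2$ is exactly the first displayed identity of the corollary, so this part needs nothing new.

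Next, fix $r\ge 2$ and view the identity as an affine equation in the unknown $\alpha^{r-1}$. Subtracting $\omega^{-r}$ from both sides gives $P_r(G)-\omega^{-r}=(1-\omega^{-r})\alpha^{r-1}$. To divide, I need $1-\omega^{-r}\neq 0$, i.e.\ $\omega\ge 2$. This is the one point needing care. If $\omega=1$ then $G=A$ is abelian and the formula degenerates to $P_r=1$. So I will note that the interesting case is $\omega\ge2$ (for instance, $G$ non-abelian forces $\omega\ge 2$), and assume $\omega\ge 2$ throughout. Dividing then gives the formula for $\alpha^{r-1}$, and the case $r=2$ is the stated expression for $\alpha$ in terms of $P_2(G)$.

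For the determination statement, I will note that $\alpha=f/n$ is a positive real number. So from $\alpha^{r-1}$ one recovers $\alpha$ uniquely as its positive $(r-1)$-th root. Substituting this $\alpha$ back into the first identity then yields $P_s(G)$ for every $s\ge 2$. Thus any single value $P_r(G)$, together with the known index $\omega$, determines the whole hierarchy $\{P_s(G)\}_{s\ge2}$ within this family.

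I expect no real obstacle. The only subtleties are the two just handled: excluding $\omega=1$ so the division is legitimate, and using positivity of $\alpha$ so the root extraction is unique.
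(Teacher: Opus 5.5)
Your proposal is correct and matches the paper. The paper gives no separate proof of this corollary: it is an immediate rewriting of \eqref{eq:Pk-index-p}, followed by solving the affine relation for $\alpha^{r-1}$. Your two added checks are exactly the details the paper leaves implicit: that $\omega\ge 2$, so $1-\omega^{-r}\neq 0$, and that $\alpha=f/n>0$, so the $(r-1)$-th root is unique. You are also right to read ``within this family'' as ``with $\omega$ known.''
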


\begin{corollary}\label{cor:P3-index-p-limit}
Fix a prime $p$, and let $(G_n)_{n\ge 1}$ be a sequence of finite groups with $|G_n|=pn$ such that $p$ is the smallest prime divisor of $|G_n|$
and $G_n$ contains an abelian subgroup $A_n$ of order $n$.
Set $f_n:=|A_n\cap \Z(G_n)|$.
Then for every $r\ge 2$,
\[
P_r(G_n)=\frac{1}{p^r}+\left(1-\frac{1}{p^r}\right)\left(\frac{f_n}{n}\right)^{r-1}.
\]
In particular, for each fixed $r\ge 2$ one has
\[
\lim_{n\to\infty} P_r(G_n)=\frac{1}{p^r}
\quad\Longleftrightarrow\quad
\frac{f_n}{n}\longrightarrow 0.
\]
More generally, if $f_n/n\to \alpha\in[0,1]$, then for each fixed $r\ge 2$,
\[
\lim_{n\to\infty} P_r(G_n)
=\frac{1}{p^r}+\left(1-\frac{1}{p^r}\right)\alpha^{r-1}.
\]
\end{corollary}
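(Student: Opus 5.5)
The statement to prove is Corollary~\ref{cor:P3-index-p-limit}. The plan is to reduce it to Theorem~\ref{prop:Pk-normal-index-p} through Corollaries~\ref{cor:index-p-normality} and~\ref{prop:Pk-normal-index-p1}, and then pass to the limit.

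Fix $n$. Since $[G_n:A_n]=p$ and $p$ is the smallest prime divisor of $|G_n|$, Corollary~\ref{cor:index-p-normality} shows that $A_n\trianglelefteq G_n$, so $G_n/A_n\cong C_p$ is cyclic. We split into two cases.

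\emph{Case $G_n$ non-abelian.} Corollary~\ref{prop:Pk-normal-index-p1} verifies the equal-fixed-subgroup hypothesis $\C_{A_n}(t^i)=A_n\cap\Z(G_n)$ for $1\le i\le p-1$. Theorem~\ref{prop:Pk-normal-index-p}, applied with $\omega=p$, $|A_n|=n$ and $F=A_n\cap\Z(G_n)$, then gives the displayed formula with $f=f_n$.

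\emph{Case $G_n$ abelian.} Here $f_n=n$, and the right-hand side equals $p^{-r}+(1-p^{-r})\cdot 1=1=P_r(G_n)$. So the formula holds in this case as well. (Equivalently, the hypothesis of Theorem~\ref{prop:Pk-normal-index-p} holds trivially, since every $\C_{A_n}(t^j)=A_n=F$.) This boundary case is the only point needing care, because Corollary~\ref{prop:Pk-normal-index-p1} assumes non-abelian.

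For the limit statements, fix $r\ge2$ and write $\alpha_n=f_n/n\in(0,1]$. Then
\[
P_r(G_n)-p^{-r}=(1-p^{-r})\alpha_n^{r-1}.
\]
The factor $1-p^{-r}$ is a positive constant, and $x\mapsto x^{r-1}$ is a continuous, strictly increasing bijection of $[0,1]$ with continuous inverse $y\mapsto y^{1/(r-1)}$ for $r\ge2$. Hence $P_r(G_n)\to p^{-r}$ if and only if $\alpha_n^{r-1}\to0$, which holds if and only if $\alpha_n\to0$. For the general statement, if $\alpha_n\to\alpha$, continuity of $x\mapsto x^{r-1}$ gives $\lim_{n\to\infty}P_r(G_n)=p^{-r}+(1-p^{-r})\alpha^{r-1}$.

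No step here is a real obstacle; the only subtlety is the abelian case handled above.
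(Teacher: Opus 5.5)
Your proposal is correct and follows the route the paper intends. The paper gives no separate proof of this corollary; it treats it as an immediate consequence of Corollary~\ref{cor:index-p-normality}, Corollary~\ref{prop:Pk-normal-index-p1} and Theorem~\ref{prop:Pk-normal-index-p}, followed by the same elementary limit argument you give. Your explicit check of the abelian case, which Corollary~\ref{prop:Pk-normal-index-p1} excludes but the statement does not, is a worthwhile addition.
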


\begin{theorem}[Gap and extremal rigidity for non-abelian index-$p$ abelian extensions]\label{thm:gap-index-p-abelian}
Let $G$ be a finite group with $|G|=pn$, where $p$ is the smallest prime divisor of $|G|$.
Assume that $G$ contains an abelian subgroup $A$ of order $n$ (equivalently, $[G:A]=p$), and suppose that $G$ is non-abelian.
Set
\[
F:=A\cap \Z(G),\qquad f:=|F|.
\]
Then $f\le n/p$, i.e.
\[
\frac{|A\cap \Z(G)|}{|A|}\le \frac{1}{p}.
\]
Consequently, for every $r\ge 2$,
\begin{equation}\label{eq:gap-Pk-index-p}
P_r(G)\le \frac{p^r+p^{r-1}-1}{p^{2r-1}}.
\end{equation}
Moreover, the following are equivalent:
\begin{enumerate}
\item\label{it:extremal-f} $f=n/p$ (equivalently, $n/f=p$);
\item\label{it:extremal-Pk} equality holds in \eqref{eq:gap-Pk-index-p} for some (equivalently, for every) $r\ge 2$;
\item\label{it:extremal-quot} $G/\Z(G)\cong C_p\times C_p$;
\item\label{it:extremal-maxab} $G$ has exactly $p+1$ maximal abelian subgroups containing $\Z(G)$.
Equivalently, these are $A$ and the $p$ distinct centralizers $\C_G(x)$ with $x\in G\setminus A$.
\end{enumerate}
\end{theorem}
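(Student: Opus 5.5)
First, by Corollary~\ref{cor:index-p-normality} the subgroup $A$ is normal in $G$. Corollary~\ref{prop:Pk-normal-index-p1} then shows that Theorem~\ref{prop:Pk-normal-index-p} applies with $\omega=p$. So for every $r\ge2$,
\[
P_r(G)=\frac{1}{p^r}+\Bigl(1-\frac{1}{p^r}\Bigr)\Bigl(\frac fn\Bigr)^{r-1}.
\]
This is strictly increasing in $f/n$. Once $f\le n/p$ is known, inequality \eqref{eq:gap-Pk-index-p} follows by substituting $f/n=1/p$:
\[
p^{-r}+(1-p^{-r})p^{-(r-1)}=\frac{p^{r}+p^{r-1}-1}{p^{2r-1}}.
\]
Strict monotonicity also gives (\ref{it:extremal-f})$\Leftrightarrow$(\ref{it:extremal-Pk}) for each single $r$, which covers both ``some'' and ``every''.

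For the bound $f\le n/p$, I would use the proof of Theorem~\ref{prop:Pk-normal-index-p}. The map $\delta_1(a)=\varphi(a)a^{-1}$ is a homomorphism $A\to A$ with kernel $F$, so $n/f=|\operatorname{im}\delta_1|$. Since $G$ is non-abelian, $\varphi\ne\mathrm{id}$, so $n/f>1$. The remaining task is to show $p\mid n/f$. Because $\varphi^p$ is conjugation by $t^p\in A$, it is trivial on the abelian group $A$. Hence $\varphi$ acts on $A/F$ as an automorphism of order dividing $p$, and its only fixed point is the identity: if $\varphi(a)a^{-1}\in F$, then $\delta_1^{\,p}$-type telescoping $\prod_{i=0}^{p-1}\varphi^i(\varphi(a)a^{-1})=1$, and since $\varphi(a)a^{-1}\in F$ is fixed by $\varphi$, this gives $(\varphi(a)a^{-1})^p=1$. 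That alone does not finish, so I would rather argue by orbits. $\langle\varphi\rangle\cong C_p$ acts on $A$, the fixed set is exactly $F$, and every other orbit has size $p$. Therefore $n\equiv f\pmod p$, so $p\mid n-f$. Since $f\mid n$, we get $f\le n-f$ and hence $n/f\ge2$. To conclude $n/f\ge p$, I would use that every prime divisor of $n/f$ divides $|G|$ and is therefore $\ge p$, while $n/f>1$. This is the key step, and it is where minimality of $p$ is used.

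For (\ref{it:extremal-f})$\Leftrightarrow$(\ref{it:extremal-quot}), note $Z(G)\cap A=F$. Also $Z(G)\not\subseteq A$ is possible, but $AZ(G)/A\le G/A$ has order $1$ or $p$. If $Z(G)\not\le A$, then $G=AZ(G)$ would be abelian, so $Z(G)=F$ and $|G/Z(G)|=pn/f$. Then $f=n/p$ if and only if $|G/Z(G)|=p^2$. Since $G/Z(G)$ is never cyclic and is non-trivial, having order $p^2$ is equivalent to $G/Z(G)\cong C_p\times C_p$.

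For (\ref{it:extremal-quot})$\Leftrightarrow$(\ref{it:extremal-maxab}), I expect more bookkeeping. Assume (\ref{it:extremal-quot}). Every abelian subgroup properly containing $Z(G)$ has image of order $p$ in $G/Z(G)$, since an image of order $p^2$ would make $G$ abelian. The preimages of the $p+1$ subgroups of order $p$ are abelian, being cyclic modulo the centre, and they are maximal. They are exactly $A$ and the centralizers $C_G(x)$ for $x\notin A$, because $|C_G(x)|=pf$ by the proof of Theorem~\ref{prop:Pk-normal-index-p} and $C_G(x)=\langle Z(G),x\rangle$. Conversely, assume (\ref{it:extremal-maxab}). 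The subgroups $C_G(x)$ for $x\notin A$ are abelian of order $pf$, contain $Z(G)=F$, and are maximal abelian. There are $(pn-n)/(pf-f)=n/f$ distinct ones, since they pairwise intersect in $F$ and cover $G\setminus A$. Together with $A$ this gives $n/f+1$ maximal abelian subgroups containing $Z(G)$, so $n/f+1=p+1$ and $f=n/p$.
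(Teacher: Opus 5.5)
Your proof is correct and takes essentially the paper's route: normality from the smallest-prime corollary, the exact index-$p$ formula with strict monotonicity in $f/n$, the fact that $n/f=|\operatorname{im}\delta_1|>1$ has all prime divisors at least $p$, $\Z(G)\le A$ giving $|G:\Z(G)|=pn/f$, and the partition of $G\setminus A$ by the abelian centralizers of order $pf$. The orbit-counting detour is unnecessary, since the prime-divisor argument alone gives $n/f\ge p$; and in (4)$\Rightarrow$(1) you never show that $A$ and the $n/f$ centralizers are all the maximal abelian subgroups containing $\Z(G)$, but this is harmless because $n/f+1\le p+1$ together with $n/f\ge p$ already forces $n/f=p$.
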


\begin{proof}
By Corollary~\ref{cor:index-p-normality}, $A\trianglelefteq G$.
Choose $t\in G\setminus A$, so $G=\langle A,t\rangle$ and conjugation by $t$ defines an automorphism
$\varphi\in \Aut(A)$ of order $p$.
Since $G$ is non-abelian, $\varphi\neq \mathrm{id}$.

Consider the homomorphism
\[
\delta:A\to A,\qquad \delta(a)=a^{-1}\varphi(a)=a^{-1}t a t^{-1}.
\]
Its kernel is $\Fix_A(\varphi)=\C_A(t)=F$, hence $|F|=|A|/|\delta(A)|$.
Since $\varphi\neq \mathrm{id}$, the image $\delta(A)$ is a nontrivial subgroup of $A$.
Every prime divisor of $|A|=n$ is at least $p$ (since $p$ is the smallest prime divisor of $|G|$ and $A\le G$).
Therefore any nontrivial subgroup of $A$ has order divisible by some prime $\ge p$, and hence has order at least $p$.
Thus $|\delta(A)|\ge p$, and therefore $f=|F|\le n/p$.

Now Corollary~\ref{prop:Pk-normal-index-p1} shows that Theorem~\ref{prop:Pk-normal-index-p} applies.
Therefore, for each fixed $r\ge 2$ the function $x\mapsto x^{r-1}$ is increasing on $[0,1]$, so from $f/n\le 1/p$ we get
\[
P_r(G)\le \frac{1}{p^r}+\left(1-\frac{1}{p^r}\right)\left(\frac{1}{p}\right)^{r-1}
=\frac{p^r+p^{r-1}-1}{p^{2r-1}},
\]
which is \eqref{eq:gap-Pk-index-p}.

We next record that in the non-abelian case one has $\Z(G)\subseteq A$, hence $\Z(G)=A\cap \Z(G)=F$.
Indeed, if $z\in \Z(G)\setminus A$ then $zA$ is a nontrivial element of $G/A\cong C_p$, hence a generator.
Thus $G=\langle A,z\rangle$.
Since $z$ is central and $A$ is abelian, this forces $G$ to be abelian, contradicting the hypothesis.

\smallskip
\noindent\emph{(\ref{it:extremal-f}) $\Longleftrightarrow$ (\ref{it:extremal-Pk}).}
By Corollary~\ref{prop:Pk-normal-index-p1}, the exact formula \eqref{eq:Pk-index-p} applies, and for each fixed $r\ge 2$ the value of $P_r(G)$ is a strictly increasing function of $f/n\in(0,1]$.
Since $f/n\le 1/p$, equality holds in \eqref{eq:gap-Pk-index-p} for some (equivalently, for every) $r\ge 2$
if and only if $f/n=1/p$, i.e.\ $f=n/p$.

\smallskip
\noindent\emph{(\ref{it:extremal-f}) $\Longleftrightarrow$ (\ref{it:extremal-quot}).}
If $f=n/p$, then $\Z(G)=F$ and so $|G:\Z(G)|=|G:F|=p\cdot(n/f)=p^2$.
Since $G$ is non-abelian this forces $G/\Z(G)\cong C_p\times C_p$.
Conversely, if $G/\Z(G)\cong C_p\times C_p$ then $|G:\Z(G)|=p^2$ and $\Z(G)=F$ as above, so $f=n/p$.

\smallskip
\noindent\emph{(\ref{it:extremal-f}) $\Longleftrightarrow$ (\ref{it:extremal-maxab}).}
For $x\in G\setminus A$ write $M_x:=\C_G(x)$.
Then $|M_x|=pf$ (see the proof of Theorem~\ref{prop:Pk-normal-index-p}), and in particular $M_x$ is abelian.
If $y\in G\setminus A$ commutes with $x$, then $y\in M_x$ and $M_x\subseteq \C_G(y)=M_y$.
Comparing orders gives $M_x=M_y$.
Thus the sets $M_x\setminus A$ partition $G\setminus A$ into blocks of size $|M_x\setminus A|=(p-1)f$,
so there are exactly
\[
d:=\frac{|G\setminus A|}{(p-1)f}=\frac{(p-1)n}{(p-1)f}=\frac{n}{f}
\]
distinct subgroups among the $M_x$; denote them $M_1,\dots,M_d$.
Every maximal abelian subgroup containing $\Z(G)=F$ is either $A$ or one of the $M_i$:
indeed, if $B$ is abelian, contains $F$, and is not contained in $A$, then choosing $x\in B\setminus A$ gives
$B\le \C_G(x)=M_x$, hence $B=M_x$ by maximality.
Therefore $G$ has exactly $d+1$ maximal abelian subgroups containing $\Z(G)$.
In particular, $G$ has exactly $p+1$ such subgroups if and only if $d=p$, i.e.\ $f=n/p$.
\end{proof}

\medskip
\noindent\textit{Remark.}
In the extremal case $G/\Z(G)\cong C_p\times C_p$, the $p+1$ maximal abelian subgroups containing $\Z(G)$ correspond bijectively to the $p+1$ one-dimensional subspaces of the $2$-dimensional $\mathbb{F}_p$-vector space $G/\Z(G)$.
\medskip

\begin{proposition}[Realizing fixed-point proportions]\phantomsection\label{prop:alpha-realize}
\begin{enumerate}
\item Fix a prime $p$ and an integer $d\ge 1$.
There exists a non-abelian group $G$ of order $p^{2d+1}$ with an abelian subgroup $A$ of index $p$ such that
\[
\frac{|A\cap \Z(G)|}{|A|}=\frac{1}{p^d}.
\]
\item Let $p$ and $q$ be primes with $p\mid (q-1)$, and let $e\ge 1$.
There exists a non-abelian group $G$ of order $pq^e$ with an abelian subgroup $A$ of index $p$ such that
\[
\frac{|A\cap \Z(G)|}{|A|}=\frac{1}{q^e}.
\]
\end{enumerate}
\end{proposition}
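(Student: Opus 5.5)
Both parts are explicit constructions. In each case $A\trianglelefteq G$ is abelian of index $p$, $G=\langle A,t\rangle$, and, since $G$ is non-abelian, $\Z(G)\subseteq A$ (as shown in the proof of Theorem~\ref{thm:gap-index-p-abelian}). So $A\cap\Z(G)=\C_A(t)=\ker\delta$ with $\delta(a)=a^{-1}\varphi(a)$, where $\varphi$ is conjugation by $t$. It therefore suffices to build a semidirect product $A\rtimes\langle t\rangle$ (or a group with $G/A\cong C_p$) in which $\varphi$ has a fixed subgroup of the prescribed order.

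For part (1) we take $A=C_p\times \mathbb F_p^{2d}$, which has order $p^{2d+1}$; the group $G$ will have order $p^{2d+2}$ with $A$ of index $p$. That gives the wrong order, so instead we choose $|A|=p^{2d}$ and $|G|=p^{2d+1}$, and we need $|\C_A(t)|=p^{d}$.

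\begin{itemize}
\item Let $A=\mathbb F_p^{d}\oplus\mathbb F_p^{d}$ (elementary abelian).
\item Let $\varphi(u,v)=(u,v+u)$, a unipotent automorphism of order $p$ with fixed space $\{(0,v)\}$ of order $p^d$.
\item Set $G=A\rtimes_\varphi C_p$, of order $p^{2d+1}$. It is non-abelian since $\varphi\ne\mathrm{id}$.
\end{itemize}

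Then $\C_A(t)=0\oplus\mathbb F_p^d$, so $|A\cap\Z(G)|/|A|=p^d/p^{2d}=p^{-d}$. This is essentially the extraspecial/Heisenberg-type picture. If one prefers to avoid the semidirect product, a group of the form $H_{\mathbb F_p}$ with a Lagrangian $A$ would also do, but the semidirect product is cleaner.

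For part (2), take $A=C_q^{\,e}$ (or $C_{q^e}$) and let $\lambda\in\mathbb F_q^\times$ have order $p$, which exists since $p\mid q-1$. Let $\varphi$ act on $A=\mathbb F_q^{e}$ as scalar multiplication by $\lambda$. This is an automorphism of order $p$ with no nonzero fixed vectors, since $(\lambda-1)$ is invertible. Set $G=A\rtimes_\varphi C_p$, of order $pq^e$. It is non-abelian, and $\C_A(t)=1$, so the ratio is $1/q^e$. The cyclic alternative $A=C_{q^e}$ with $\varphi(a)=a^{m}$, $m$ of order $p$ modulo $q^e$, also works: the fixed points satisfy $a^{m-1}=1$, and $m\not\equiv1\pmod q$ forces $a=1$. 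However, it requires the order-$p$ lift in $(\mathbb Z/q^e)^\times$, so I will use the elementary abelian version.

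The only step needing care is the verification in each case that $A\cap\Z(G)$ equals $\C_A(t)$ and has the stated order, since the centralizer of $A$ is automatic. This is immediate from the explicit linear actions: in (1) it is the kernel of the nilpotent map $\varphi-1$, and in (2) it is the kernel of the invertible map $\lambda-1$. I expect no real obstacle beyond getting the orders to match the statement: in (1), $|A|=p^{2d}$ is forced by $|G|=p^{2d+1}$ and index $p$.
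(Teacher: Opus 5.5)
Your constructions are correct and essentially the paper's. Part (1) is exactly its semidirect product of $(C_p)^{2d}$ by $d$ unipotent Jordan blocks; the opening false start with $|A|=p^{2d+1}$ should simply be deleted. Part (2) differs only in taking $A=C_q^{\,e}$ with scalar action by an order-$p$ element $\lambda\in\mathbb F_q^\times$, rather than the paper's cyclic $A=C_{q^e}$ with $a\mapsto a^u$. This is a slightly more elementary variant: fixed-point-freeness reduces to invertibility of $\lambda-1$, with no appeal to the cyclicity of $(\mathbb Z/q^e\mathbb Z)^\times$.
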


\begin{proof}
(1) Let $A\cong (C_p)^{2d}$ and identify it with the $\mathbb F_p$-vector space $V=\mathbb F_p^{2d}$.
Let $\varphi\in \mathrm{GL}(V)$ be the block diagonal matrix with $d$ Jordan blocks
$\left(\begin{smallmatrix}1&1\\0&1\end{smallmatrix}\right)$.
Then $\varphi$ has order $p$, and its fixed space has dimension $d$, hence $|\Fix_A(\varphi)|=p^d=|A|/p^d$.
Let $G:=A\rtimes_\varphi C_p$.
Then $G$ is non-abelian and satisfies the hypotheses of Corollary~\ref{prop:Pk-normal-index-p1} with $A\cap \Z(G)=\Fix_A(\varphi)$.

(2) Let $A:=C_{q^e}=\langle a\rangle$.
Since $(\mathbb Z/q^e\mathbb Z)^\times$ is cyclic of order $q^{e-1}(q-1)$ and $p\mid (q-1)$, there exists $u\in (\mathbb Z/q^e\mathbb Z)^\times$ of order $p$.
Let $G:=A\rtimes C_p$ where a generator $t$ of $C_p$ acts by $t a t^{-1}=a^u$.
Then $\varphi(a)=a^u$ has order $p$ and is nontrivial modulo $q$, so $\gcd(u-1,q^e)=1$.
Hence $\Fix_A(\varphi)=1$, i.e.\ $|A\cap \Z(G)|=1$, and $G$ is non-abelian.
\end{proof}

\medskip
\noindent\textit{Remark (Counterexamples to a naive limit conjecture).}
Motivated by Corollary~\ref{dihedral-P3-lim}, an earlier draft proposed the following (false) ``naive'' statement:
whenever $|G|=pn$ with $p$ the smallest prime divisor of $|G|$ and $G$ contains an abelian subgroup $A$ of order $n$,
one has $\lim_{n\to\infty}P_3(G)=1/p^3$.
Theorem~\ref{prop:Pk-normal-index-p} shows that, more generally, for each fixed $r\ge 2$ the value of $P_r(G)$ is governed by the
\emph{fixed-point proportion}
\[
\frac{|A\cap \Z(G)|}{|A|},
\]
and without an assumption forcing this ratio to tend to $0$ the limit can be strictly larger than $1/p^r$.
Here are three concrete families illustrating this phenomenon.

\begin{itemize}
\item \emph{Abelian groups.} If $G_n$ is abelian (for instance $G_n\cong C_{pn}$), then $P_r(G_n)=1$ for all $r\ge 2$ and all $n$.

\item \emph{A non-abelian family with constant limit.} Let $p=2$ and set $G_m:=S_3\times C_m$ with $m$ odd.
Then $|G_m|=6m=2(3m)$, and $A_m:=C_3\times C_m\le G_m$ is abelian of order $3m$.
Since $\Z(G_m)=\Z(S_3)\times C_m\cong C_m$, we have $A_m\cap \Z(G_m)\cong C_m$, hence $f_m/n=1/3$.
Theorem~\ref{prop:Pk-normal-index-p} yields, for every $r\ge 2$,
\[
P_r(G_m)
=\frac{1}{2^r}+\left(1-\frac{1}{2^r}\right)\left(\frac13\right)^{r-1}.
\]
In particular, $P_3(G_m)=2/9$ for all $m$, so $\lim_{m\to\infty}P_3(G_m)=2/9\neq 1/8$.

\item \emph{A non-abelian $p$-group family.} Fix a prime $p$ and an integer $e\ge 2$.
Let $G_e:=C_{p^e}\rtimes C_p$ where a generator $t$ of $C_p$ acts on $C_{p^e}=\langle a\rangle$ by
$t a t^{-1}=a^{\,1+p^{e-1}}$.
Then $|G_e|=p^{e+1}=p\cdot p^e$, and $A:=\langle a\rangle$ is an abelian subgroup of order $p^e$.
A direct check shows that $A\cap \Z(G_e)=\langle a^p\rangle$ has order $p^{e-1}$, so $f/n=1/p$.
Therefore Theorem~\ref{prop:Pk-normal-index-p} gives the constant value, for every $r\ge 2$,
\[
P_r(G_e)=\frac{1}{p^r}+\left(1-\frac{1}{p^r}\right)\frac{1}{p^{r-1}},
\]
independent of $e$.
\end{itemize}

\noindent More generally, along any sequence $(G_n,A_n)$ as in Corollary~\ref{cor:P3-index-p-limit}, whenever the limit
\[
\alpha:=\lim_{n\to\infty}\frac{|A_n\cap \Z(G_n)|}{|A_n|}
\]
exists in $[0,1]$, one necessarily has
\[
\lim_{n\to\infty}P_r(G_n)
=\frac{1}{p^r}+\left(1-\frac{1}{p^r}\right)\alpha^{r-1}
\qquad (r\ge 2\ \text{fixed}).
\]
It would be interesting to understand which values of $\alpha$ can actually occur; Proposition~\ref{prop:alpha-realize}
shows that at least $\alpha=p^{-d}$ (for any $d\ge 1$) and $\alpha=q^{-e}$ (for primes $q\equiv 1\ (\mathrm{mod}\,p)$) are realizable.

\section{Sharp bounds for \texorpdfstring{$P_r$}{Pr} and extremal groups}\label{sec:bounds}

\subsection{A sharp bound for \texorpdfstring{$P_2(G)$}{P2(G)} (general \texorpdfstring{$p$}{p})}
Let $p$ be the smallest prime dividing $|G|$.

\begin{lemma}\label{lem:centerindex}
If $G$ is finite and non-abelian, then $|G:\Z(G)|\ge p^2$, hence $|\Z(G)|/|G|\le 1/p^2$.
\end{lemma}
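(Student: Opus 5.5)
The plan is to combine the classical fact that $G/\Z(G)$ cannot be a nontrivial cyclic group with the fact that every prime dividing $|G:\Z(G)|$ divides $|G|$, and hence is at least $p$.

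\emph{Step 1: the quotient is not cyclic.} Suppose $G/\Z(G)$ were cyclic, generated by $g\Z(G)$. Then every element of $G$ has the form $g^k z$ with $z\in\Z(G)$. Any two such elements commute, so $G$ would be abelian, contrary to hypothesis. Hence $G/\Z(G)$ is a non-cyclic group, and in particular it is nontrivial.

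\emph{Step 2: the index is not prime.} Set $m:=|G:\Z(G)|$, which divides $|G|$. Every group of prime order is cyclic, so Step~1 rules out $m$ being prime. Also $m\neq 1$, since $G$ is non-abelian. Therefore $m$ has at least two prime factors counted with multiplicity, say $m=q_1q_2\cdots$ with $q_i$ prime. Each $q_i$ divides $|G|$, so $q_i\ge p$ because $p$ is the smallest prime divisor of $|G|$. Hence $m\ge q_1q_2\ge p^2$.

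\emph{Step 3: conclusion.} We now have $|\Z(G)|/|G|=1/m\le 1/p^2$. Nothing here is really an obstacle. The only point needing care is Step~2: one must observe that a composite index forces at least two prime factors, each bounded below by $p$, rather than merely asserting that $m>p$.
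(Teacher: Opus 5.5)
Your proposal is correct and follows the same approach as the paper: $G/\Z(G)$ cannot be cyclic, and a nontrivial non-prime divisor of $|G|$ is at least $p^2$. Your Step~2 spells out the prime-factor count that the paper compresses into ``any group of order $p$ is cyclic.'' That is, you note explicitly that every prime factor of the index is at least $p$ and that there are at least two of them.
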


\begin{proof}
The quotient $G/\Z(G)$ is cyclic if and only if $G$ is abelian.
If $G$ is non-abelian, then $G/\Z(G)$ is a nontrivial \emph{noncyclic} finite group.
The smallest order of a noncyclic finite group whose order divides $|G|$ is at least $p^2$,
since any group of order $p$ is cyclic.
Thus $|G:\Z(G)|\ge p^2$.
\end{proof}

\begin{lemma}\label{lem:centralizerindexp}
For every noncentral $x\in G$, the index $[G:\C_G(x)]$ is a nontrivial divisor of $|G|$,
hence $[G:\C_G(x)]\ge p$, so $|\C_G(x)|\le |G|/p$.
\end{lemma}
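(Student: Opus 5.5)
The plan is to use the orbit--stabilizer identity for the conjugation action of $G$ on itself, combined with Lagrange's theorem. Fix a noncentral element $x\in G$. Its conjugacy class $[x]$ is the orbit of $x$ under conjugation, and its stabilizer is $\C_G(x)$. Orbit--stabilizer gives
\[
[G:\C_G(x)]=|[x]|=\frac{|G|}{|\C_G(x)|},
\]
and by Lagrange's theorem this index is a divisor of $|G|$.

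The key step is to show the divisor is nontrivial. Since $x\notin\Z(G)$, some $g\in G$ fails to commute with $x$. Hence $\C_G(x)\ne G$, so $[G:\C_G(x)]>1$. Equivalently, $gxg^{-1}\ne x$, so the class $[x]$ has at least two elements.

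Now let $d:=[G:\C_G(x)]$. This $d$ is a divisor of $|G|$ with $d>1$, so it has some prime divisor $\ell$. Then $\ell$ also divides $|G|$, and since $p$ is the smallest prime dividing $|G|$, we get $\ell\ge p$. Hence $d\ge \ell\ge p$. Rearranging gives $|\C_G(x)|=|G|/d\le |G|/p$.

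I do not expect a genuine obstacle. The only point needing a moment's care is the step from ``nontrivial divisor'' to ``at least $p$''. This works because every integer $d>1$ dividing $|G|$ has a prime factor, and that prime factor divides $|G|$, so it is at least the minimal prime $p$. This is the same reasoning used in Lemma~\ref{lem:centerindex}.
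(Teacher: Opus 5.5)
Your proof is correct and follows essentially the same route as the paper: $\C_G(x)$ is a proper subgroup because $x\notin\Z(G)$, so by Lagrange its index is a divisor of $|G|$ greater than $1$, hence at least the smallest prime divisor $p$. The orbit--stabilizer step is not needed, since Lagrange alone gives divisibility. Your explicit prime-factor argument just spells out the paper's one-line final step.
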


\begin{proof}
$\C_G(x)$ is a proper subgroup iff $x\notin \Z(G)$, so its index is an integer $>1$
dividing $|G|$. The least such integer is at least the smallest prime divisor $p$ of $|G|$.
\end{proof}

\begin{theorem}[Sharp $P_2$ bound]\label{thm:P2bound}
Let $G$ be a finite non-abelian group, and let $p$ be the smallest prime dividing $|G|$.
Then
\[
P_2(G)\le \frac{p^2+p-1}{p^3}.
\]
Equality holds if and only if $G/\Z(G)\cong C_p\times C_p$.
\end{theorem}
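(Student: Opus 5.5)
We will use the standard class-equation bound. Write $P_2(G)=\frac{1}{|G|^2}\sum_{x\in G}|\C_G(x)|$ and split the sum into central and noncentral $x$. Put $z:=|\Z(G)|/|G|$. Each central $x$ contributes $|G|$. Each noncentral $x$ contributes at most $|G|/p$, by Lemma~\ref{lem:centralizerindexp}. Hence
\[
P_2(G)\le z+(1-z)\frac1p=\frac1p+z\Bigl(1-\frac1p\Bigr).
\]
The right-hand side is increasing in $z$, so Lemma~\ref{lem:centerindex} ($z\le 1/p^2$) gives
\[
P_2(G)\le \frac1p+\frac{1}{p^2}-\frac{1}{p^3}=\frac{p^2+p-1}{p^3}.
\]

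For the equality case we track when both inequalities are tight. Equality forces $z=1/p^2$, i.e.\ $|G:\Z(G)|=p^2$. Since $G/\Z(G)$ is noncyclic for non-abelian $G$, this gives $G/\Z(G)\cong C_p\times C_p$. Conversely, suppose $G/\Z(G)\cong C_p\times C_p$ and let $x$ be noncentral. Then $\C_G(x)$ properly contains $\langle x,\Z(G)\rangle$, whose index is $p$. It is also a proper subgroup of $G$, so $|\C_G(x)|=|G|/p$ exactly. Together with $z=1/p^2$, both inequalities become equalities, and equality holds.

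As a cross-check, one could instead note that $G/\Z(G)\cong C_p\times C_p$ yields an abelian subgroup $A=\langle x,\Z(G)\rangle$ of index $p$. Theorem~\ref{thm:gap-index-p-abelian} with $r=2$ would then give the same value, though the direct argument above already suffices.

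The only delicate point is the converse direction: we must verify that every noncentral centralizer has index exactly $p$. This follows from $\langle x\rangle\Z(G)\le \C_G(x)<G$ and the fact that $|G:\langle x\rangle\Z(G)|=p$. Everything else is monotonicity.
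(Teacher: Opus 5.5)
Your proof is correct and follows the paper's argument essentially step for step: split $\sum_{x}|\C_G(x)|$ into central and noncentral parts, bound each noncentral centralizer by $|G|/p$, use $|G:\Z(G)|\ge p^2$ together with monotonicity in $|\Z(G)|/|G|$, and for the converse use $\langle x\rangle\Z(G)\le \C_G(x)<G$ with $|G:\langle x\rangle\Z(G)|=p$. One wording slip: in your second paragraph $\C_G(x)$ does not \emph{properly} contain $\langle x,\Z(G)\rangle$; it equals it, since a subgroup properly containing an index-$p$ subgroup would be all of $G$, as your final paragraph correctly states.
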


\begin{proof}
Write $\alpha:=|\Z(G)|/|G|$. Then
\[
|G|^2P_2(G)=\sum_{x\in G} |\C_G(x)|
=\sum_{x\in \Z(G)}|G|+\sum_{x\notin \Z(G)}|\C_G(x)|.
\]
The first sum equals $|\Z(G)||G|=\alpha |G|^2$.
By Lemma~\ref{lem:centralizerindexp}, for $x\notin\Z(G)$ one has $|\C_G(x)|\le |G|/p$.
Thus
\[
|G|^2P_2(G)\le \alpha |G|^2 + (1-\alpha)|G|^2\cdot \frac{1}{p},
\quad\text{i.e.}\quad
P_2(G)\le \alpha + \frac{1-\alpha}{p}.
\]
By Lemma~\ref{lem:centerindex} we have $\alpha\le 1/p^2$. Since the right-hand side is
increasing in $\alpha$, it is maximized at $\alpha=1/p^2$, giving
\[
P_2(G)\le \frac{1}{p^2}+\Big(1-\frac{1}{p^2}\Big)\frac{1}{p}
=\frac{p^2+p-1}{p^3}.
\]

For equality, we must have $\alpha=1/p^2$, so $|G:\Z(G)|=p^2$.
Also the estimate $|\C_G(x)|\le |G|/p$ must be an equality for all $x\notin\Z(G)$, so
$[G:\C_G(x)]=p$ for all noncentral $x$.
If $|G:\Z(G)|=p^2$, then $G/\Z(G)$ has order $p^2$. If it were cyclic then $G$ would be abelian;
hence $G/\Z(G)\cong C_p\times C_p$.

Conversely, if $G/\Z(G)\cong C_p\times C_p$, then $\alpha=1/p^2$ and every noncentral element
has centralizer of index $p$ (because its image spans a subgroup of order $p$ in the quotient).
Substituting in the above computation gives equality.
\end{proof}

The groups $G$ that have $P_2(G)\geq \frac{1}{2}$ have already been fully classified in \cite{Lescot1995}, where the possible values of $P_2(G)$ are in one of the following cases (i.e., if $G$ is not one of the following types of groups, then $P_2(G)<\frac{1}{2}$).
\begin{itemize}
    \item $P_2(G)=1$ ~if and only if $G$ is abelian,
    \item For a positive integer $k$, ~$P_2(G)=\frac{1} {2}+\frac{1}{2^{2k+1}}$ ~if and only if $G$ is isoclinic to one of the two non-abelian supersolvable 2-groups of order $2^{2k+1}$, with $G/Z(G)\simeq C_2^{2k}$;
    \item $P_2(G)=\frac{1}{2}$ ~if and only if $G$ is isoclinic to $S_3$.
\end{itemize}

\subsection{A general bound for \texorpdfstring{$P_r(G)$}{Pr(G)}}
\begin{lemma}[A one-step inequality]\label{lem:onestep}
Let $G$ be a finite group, $r\ge 2$, and $p$ the smallest prime dividing $|G|$.
With $\alpha=|\Z(G)|/|G|$,
\[
P_r(G)\le \alpha\,P_{r-1}(G) + \frac{1-\alpha}{p^{\,r-1}}.
\]
\end{lemma}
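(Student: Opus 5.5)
We will start from the centralizer recursion of Lemma~\ref{lem:recursion}, written in its unnormalized form
\[
|G|^rP_r(G)=\sum_{x\in G}|\C_G(x)|^{r-1}P_{r-1}(\C_G(x)),
\]
and split the sum according to whether $x$ is central or not, exactly as in the proof of Theorem~\ref{thm:P2bound}.

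\emph{Central elements.} For $x\in\Z(G)$ we have $\C_G(x)=G$. Each such term therefore equals $|G|^{r-1}P_{r-1}(G)$. There are $|\Z(G)|=\alpha|G|$ of them, so their total contribution is $\alpha|G|^rP_{r-1}(G)$.

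\emph{Noncentral elements.} For $x\notin\Z(G)$ we use the trivial bound $P_{r-1}(\C_G(x))\le 1$, since it is a probability. We then use Lemma~\ref{lem:centralizerindexp}, which gives $|\C_G(x)|\le |G|/p$. Each such term is therefore at most $|G|^{r-1}/p^{r-1}$. There are $(1-\alpha)|G|$ noncentral elements, so their total contribution is at most $(1-\alpha)|G|^r/p^{r-1}$.

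Adding the two contributions and dividing by $|G|^r$ yields the claimed inequality. The case $r=2$ is consistent with this, since $P_1\equiv1$ and the bound reduces to the one in Theorem~\ref{thm:P2bound}.

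There is essentially no obstacle in this argument. The only points to check are the following.
\begin{itemize}
\item The recursion is valid for $r=2$ with the convention $P_1=1$. This holds because $\Comm_1(H)=H$.
\item The exponent on $|\C_G(x)|$ is $r-1$, which is what produces the factor $p^{-(r-1)}$.
\end{itemize}
The lemma also holds trivially when $G$ is abelian: then $\alpha=1$, so the right-hand side equals $P_{r-1}(G)=1$.
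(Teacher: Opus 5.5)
Your proposal is correct and follows essentially the same route as the paper: you split the centralizer recursion of Lemma~\ref{lem:recursion} into central and noncentral terms. You then bound the noncentral terms via $|\C_G(x)|\le |G|/p$ from Lemma~\ref{lem:centralizerindexp} together with $P_{r-1}(\C_G(x))\le 1$, and divide by $|G|^r$.
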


\begin{proof}
Use Lemma~\ref{lem:recursion}:
\[
P_r(G)=\frac{1}{|G|^r}\sum_{x\in G} |\C_G(x)|^{r-1}P_{r-1}(\C_G(x)).
\]
If $x\in \Z(G)$ then $\C_G(x)=G$, contributing $|G|^{r-1}P_{r-1}(G)$.
If $x\notin \Z(G)$ then $|\C_G(x)|\le |G|/p$ by Lemma~\ref{lem:centralizerindexp}, while
$P_{r-1}(\C_G(x))\le 1$. Hence the sum is at most
\[
|\Z(G)|\cdot |G|^{r-1}P_{r-1}(G) + (|G|-|\Z(G)|)\cdot (|G|/p)^{r-1}.
\]
Divide by $|G|^r$.
\end{proof}

\begin{proposition}[A two-block inequality]\label{prop:twoblock}
Let $G$ be a finite group, let $p$ be the smallest prime dividing $|G|$, and write
$\alpha:=|\Z(G)|/|G|$.
Then for all integers $n,m\ge 1$ one has the two-sided estimate
\[
\alpha^{n}\,P_m(G)\ \le\ P_{n+m}(G)\ \le\ \alpha^{n}\,P_m(G) + \frac{P_n(G)-\alpha^{n}}{p^{m}}.
\]
Equivalently,
\[
0\le P_{n+m}(G)-\alpha^{n}P_m(G)\le \frac{P_n(G)-\alpha^{n}}{p^{m}}.
\]
The analogous inequalities with the roles of $n$ and $m$ interchanged also hold.
\end{proposition}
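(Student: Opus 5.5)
The plan is to split the count of commuting $(n+m)$-tuples according to the first $n$ coordinates. Write a commuting $(n+m)$-tuple as $(\mathbf x,\mathbf y)$ with $\mathbf x\in\Comm_n(G)$ and $\mathbf y\in G^m$. Such a pair is a commuting $(n+m)$-tuple if and only if $\mathbf y$ is a commuting $m$-tuple lying in the joint centralizer $\C_G(\mathbf x)$. This gives the exact identity
\[
|G|^{n+m}P_{n+m}(G)=\sum_{\mathbf x\in\Comm_n(G)}|\C_G(\mathbf x)|^{m}\,P_m(\C_G(\mathbf x)),
\]
which iterates the argument behind Lemma~\ref{lem:recursion}. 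I then separate the central tuples $\mathbf x\in \Z(G)^n$, which number $|\Z(G)|^n=\alpha^n|G|^n$, from the remaining tuples in $\Comm_n(G)\setminus\Z(G)^n$.

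\textbf{Central block.} For $\mathbf x\in\Z(G)^n$ we have $\C_G(\mathbf x)=G$, so each such tuple contributes exactly $|G|^mP_m(G)$. Their total contribution, divided by $|G|^{n+m}$, is exactly $\alpha^nP_m(G)$. All other terms are nonnegative, which already yields the lower bound $\alpha^nP_m(G)\le P_{n+m}(G)$.

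\textbf{Noncentral block.} If $\mathbf x\notin\Z(G)^n$, then some coordinate $x_i$ is noncentral. Hence $\C_G(\mathbf x)\le \C_G(x_i)$, and Lemma~\ref{lem:centralizerindexp} gives $|\C_G(\mathbf x)|\le |G|/p$. Together with $P_m(\C_G(\mathbf x))\le 1$, each such term is at most $(|G|/p)^m$. The number of these tuples is $|\Comm_n(G)|-|\Z(G)|^n=(P_n(G)-\alpha^n)|G|^n$. Dividing by $|G|^{n+m}$ bounds this block by $(P_n(G)-\alpha^n)/p^m$, which gives the upper bound.

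\textbf{Interchanging $n$ and $m$.} The symmetric version follows by running the same argument with the roles of the blocks swapped, conditioning on the first $m$ coordinates instead. The $n=1$ case of the upper bound recovers Lemma~\ref{lem:onestep}, since $P_1=1$.

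I do not expect a genuine obstacle. The only point needing care is the observation that noncentrality of a single coordinate already forces the joint centralizer to have index at least $p$. This is immediate because a subgroup of a proper subgroup is proper, and its index is therefore an integer greater than $1$ dividing $|G|$.
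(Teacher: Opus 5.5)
Your proof is correct and is essentially the paper's argument, written as a count instead of a probability. The paper conditions on the event that the first block lies in $\Z(G)^n$ and uses independence of the two blocks for the central contribution. On the complement it forces the second block into $\C_G(x_i)^m$ for a noncentral coordinate $x_i$. Your central/noncentral split of $\sum_{\mathbf x\in\Comm_n(G)}|\C_G(\mathbf x)|^m P_m(\C_G(\mathbf x))$ carries out exactly these two steps. The $n\leftrightarrow m$ version is handled the same way; it is just the statement applied to the pair $(m,n)$.
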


\begin{proof}
Let $(x_1,\dots,x_{n+m})$ be uniformly random in $G^{n+m}$ and write
$X:=(x_1,\dots,x_n)$ and $Y:=(x_{n+1},\dots,x_{n+m})$.
Let $E_k$ denote the event that a $k$-tuple commutes pairwise.
Finally, let $A$ be the event that $X\in \Z(G)^n$ (i.e. all coordinates of $X$ are central).

For the lower bound, note that $A\cap E_m(Y)\subseteq E_{n+m}(X,Y)$: if $X$ is central and $Y$ is
pairwise commuting, then the combined $(n{+}m)$-tuple is pairwise commuting.
Since $A$ depends only on $X$ and $E_m(Y)$ depends only on $Y$, they are independent, hence
\[
P_{n+m}(G)=\Prob(E_{n+m}(X,Y))\ge \Prob(A)\Prob(E_m(Y))=\alpha^{n}P_m(G).
\]

For the upper bound, decompose
\[
\Prob(E_{n+m})=\Prob(E_{n+m}\cap A)+\Prob(E_{n+m}\cap A^c).
\]
As above, $\Prob(E_{n+m}\cap A)=\Prob(A)\Prob(E_m(Y))=\alpha^{n}P_m(G)$.

On $A^c\cap E_n(X)$, there exists at least one noncentral coordinate of $X$; fix the least index $i$
such that $x_i\notin \Z(G)$.
If $E_{n+m}$ holds, then every coordinate of $Y$ must commute with $x_i$, hence must lie in $\C_G(x_i)$.
By Lemma~\ref{lem:centralizerindexp} we have $|\C_G(x_i)|/|G|\le 1/p$.
Therefore, for each fixed $X$ in $E_n(X)\cap A^c$,
\[
\Prob(E_{n+m}\mid X)\le \Prob\big(Y\in \C_G(x_i)^m\big)\le \Big(\frac{|\C_G(x_i)|}{|G|}\Big)^m\le \frac{1}{p^{m}}.
\]
Integrating over $X$ gives
\[
\Prob(E_{n+m}\cap A^c)\le \frac{1}{p^{m}}\Prob(E_n(X)\cap A^c)=\frac{P_n(G)-\alpha^{n}}{p^{m}}.
\]
Combining with the $A$-contribution yields the stated upper bound.

Finally, the symmetric statement follows by the same argument with $X$ and $Y$ swapped.
\end{proof}

\begin{remark}
Proposition~\ref{prop:twoblock} refines the one-step inequality in Lemma~\ref{lem:onestep}:
taking $n=1$ gives
$P_{m+1}(G)\le \alpha P_m(G)+(1-\alpha)/p^{m}$.
Moreover, in the extremal situation $G/\Z(G)\cong C_p\times C_p$ (Theorem~\ref{thm:Prbound}), every
noncentral element has abelian centralizer of index $p$, and the upper bound in
Proposition~\ref{prop:twoblock} is attained for many $(n,m)$.
\end{remark}

\begin{corollary}[Exponential approach to the central contribution]\label{cor:expcentral}
With notation as in Proposition~\ref{prop:twoblock}, for all integers $n,m\ge 1$ one has
\[
\bigl|P_{n+m}(G)-\alpha^{n}P_m(G)\bigr|\le \frac{P_n(G)-\alpha^{n}}{p^{m}}.
\]
In particular, for each fixed $n$ the ``noncentral extension'' contribution
\(P_{n+m}(G)-\alpha^{n}P_m(G)\) decays at least exponentially fast in $m$ with base $p$.
Equivalently,
\[
\Prob\bigl(E_{n+m}(X,Y)\cap A^c\bigr)\le \frac{P_n(G)-\alpha^{n}}{p^{m}},
\]
in the notation of Proposition~\ref{prop:twoblock}.
If $\Z(G)=1$ (so $\alpha=0$) this simplifies to
\(
P_{n+m}(G)\le P_n(G)/p^{m}
\),
showing that, for fixed $n$, the probability of a commuting $(n{+}m)$-tuple decays at least like $p^{-m}$.
\end{corollary}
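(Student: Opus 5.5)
The corollary follows directly from Proposition~\ref{prop:twoblock}, so the plan is to read off the two-sided estimate proved there and rewrite it.

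First, the lower bound $\alpha^{n}P_m(G)\le P_{n+m}(G)$ shows that the difference $P_{n+m}(G)-\alpha^{n}P_m(G)$ is nonnegative. Hence its absolute value equals the difference itself. The upper bound of Proposition~\ref{prop:twoblock} then gives
\[
\bigl|P_{n+m}(G)-\alpha^{n}P_m(G)\bigr|\le \frac{P_n(G)-\alpha^{n}}{p^{m}}.
\]
We should also note that $P_n(G)-\alpha^n\ge 0$, since central $n$-tuples commute. This makes the right-hand side a legitimate nonnegative bound. For fixed $n$ the numerator is a constant at most $1$, so the noncentral contribution is $O(p^{-m})$, which is the claimed exponential decay.

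For the probabilistic reformulation, we reuse the decomposition from the proof of Proposition~\ref{prop:twoblock}:
\[
\Prob(E_{n+m})=\Prob(E_{n+m}\cap A)+\Prob(E_{n+m}\cap A^c),
\qquad
\Prob(E_{n+m}\cap A)=\alpha^nP_m(G).
\]
Here the second identity uses the independence of $A$ and $E_m(Y)$, together with the fact that on $A$ the event $E_{n+m}$ coincides with $E_m(Y)$. It follows that $P_{n+m}(G)-\alpha^nP_m(G)=\Prob(E_{n+m}\cap A^c)$. The displayed inequality for $\Prob(E_{n+m}\cap A^c)$ is therefore the same statement, and it was in fact already established directly in that proof.

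Finally, in the case $\Z(G)=1$ we have $\alpha=0$. Since $n\ge 1$, we get $\alpha^n=0$, and the inequality reduces to $P_{n+m}(G)\le P_n(G)/p^m$. There is no real obstacle in this argument. The only care needed is to verify nonnegativity, so that the absolute value is justified, and to observe that $\alpha^n=0$ requires $n\ge1$.
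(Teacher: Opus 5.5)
Your treatment of the two-sided estimate and of the probabilistic reformulation is correct, and it is exactly how the corollary is meant to be read off from Proposition~\ref{prop:twoblock}; the paper gives no separate argument. The gap is in your last step. $\Z(G)=1$ means the center is trivial, so $\alpha=|\Z(G)|/|G|=1/|G|$, not $0$. In fact $\alpha\ge 1/|G|$ always, because the identity is central. (The parenthetical ``so $\alpha=0$'' in the statement contains the same slip.) Substituting $\alpha=1/|G|$ into Proposition~\ref{prop:twoblock} gives only
\[
P_{n+m}(G)\le \frac{P_n(G)}{p^m}+|G|^{-n}\Bigl(P_m(G)-\frac{1}{p^m}\Bigr).
\]
The correction term here is strictly positive, for instance when $m=1$, since $P_1(G)=1$. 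So the centerless bound does not follow from the proposition by specialization, and your remark that the only care needed is $n\ge 1$ misses the real issue.

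The bound $P_{n+m}(G)\le P_n(G)/p^m$ is nevertheless true, but it needs an extra integrality input. Suppose $\Z(G)=1$ and let $G$ act on $\Comm_k(G)$ by diagonal conjugation. The only orbit of size $1$ is the identity tuple. Every other orbit has size $[G:\C_G(X)]>1$, which divides $|G|$ and so is at least $p$. Hence
\[
|\Comm_k(G)|\ge 1+p\bigl(\kappa_k(G)-1\bigr),\qquad\text{so}\qquad p\,\kappa_k(G)\le |\Comm_k(G)|+p-1.
\]
Next, $|\Comm_k(G)|$ is divisible by $|G|$, hence by $p$: this is trivial for $k=1$ and follows from Theorem~\ref{thm:burnside} for $k\ge 2$. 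The only multiple of $p$ in the interval $[|\Comm_k(G)|,\,|\Comm_k(G)|+p-1]$ is $|\Comm_k(G)|$ itself, so $p\,\kappa_k(G)\le |\Comm_k(G)|$. By Theorem~\ref{thm:burnside} this says $P_{k+1}(G)\le P_k(G)/p$. Applying it for $k=n,n+1,\dots,n+m-1$ gives the claim. Without the divisibility step you recover only the weaker $\alpha=1/|G|$ bound displayed above.
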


\begin{remark}[Stability heuristic]\label{rem:stability-heuristic}
Proposition~\ref{prop:twoblock} and Corollary~\ref{cor:expcentral} give a clean ``two-phase'' mechanism
for producing long commuting tuples.
To extend a commuting $n$-tuple by $m$ additional commuting elements, either
(i) the first block lies in $\Z(G)^n$, in which case the cost of the extension is exactly $P_m(G)$, or
(ii) some noncentral element appears in the first block, in which case all $m$ new elements are forced into
a proper centralizer of index at least $p$, costing a factor at most $p^{-m}$.
Thus any noticeable excess
\(
P_{n+m}(G)-\alpha^{n}P_m(G)\ge \varepsilon
\)
forces
\(
p^{m}\le (P_n(G)-\alpha^{n})/\varepsilon
\),
so for fixed $n$ such an excess can persist only for small $m$ or very small $p$.
This is the probabilistic mechanism behind the sharp bound in Theorem~\ref{thm:Prbound} and its rigidity of equality:
the extremal isoclinism class $G/\Z(G)\cong C_p\times C_p$ is exactly the situation in which the ``noncentral''
branch (ii) is as large as possible, since every noncentral centralizer has index $p$ and is abelian.
\end{remark}

\begin{corollary}[Induction step]\label{cor:induction-step}
Taking $n=1$ and $m=r-1$ in Proposition~\ref{prop:twoblock} yields
\[
P_r(G)\le \alpha\,P_{r-1}(G)+\frac{1-\alpha}{p^{r-1}},
\]
which is exactly the one-step estimate used in the inductive proof of Theorem~\ref{thm:Prbound}.
\end{corollary}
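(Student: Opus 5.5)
The statement to be proved is Corollary~\ref{cor:induction-step}: taking $n=1$ and $m=r-1$ in Proposition~\ref{prop:twoblock} gives the one-step estimate
\[
P_r(G)\le \alpha\,P_{r-1}(G)+\frac{1-\alpha}{p^{r-1}}.
\]
The plan is a direct specialization of the proposition, with no new argument needed beyond evaluating $P_1$.

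First, apply the upper bound of Proposition~\ref{prop:twoblock} with $n=1$ and $m=r-1$, which is legitimate since $r\ge 2$ gives $m\ge 1$. This yields
\[
P_r(G)\le \alpha P_{r-1}(G)+\frac{P_1(G)-\alpha}{p^{r-1}}.
\]

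Second, evaluate $P_1(G)$. By the definition in Section~2, $\Comm_1(G)=G$ (the commuting condition on a single element is vacuous), so $P_1(G)=|G|/|G|=1$. Substituting this gives exactly the displayed inequality.

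Third, note that this coincides with Lemma~\ref{lem:onestep}, which was proved independently, so the two derivations are consistent. In the probabilistic proof with $n=1$, the event $A^c\cap E_1(X)$ is simply the event that $x_1\notin\Z(G)$, which has probability $1-\alpha$; this matches the numerator $P_1(G)-\alpha$.

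The only point needing care is the convention $P_1(G)=1$, since the paper defines $P_r$ mainly for $r\ge2$. Section~2, however, defines $\Comm_r$ and $P_r$ for all $r\ge1$, and Proposition~\ref{prop:twoblock} allows $n=1$, so the substitution is justified and there is no real obstacle.
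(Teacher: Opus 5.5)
Your proposal is correct and is exactly the paper's argument, which the corollary statement itself describes: substitute $n=1$, $m=r-1$ into the upper bound of Proposition~\ref{prop:twoblock} and use $P_1(G)=1$, since $\Comm_1(G)=G$. Your observation that this reproduces Lemma~\ref{lem:onestep} also matches the remark the paper makes just before this corollary.
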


\begin{theorem}[Known sharp general $P_r$ bound, recalled]\label{thm:Prbound}
Let $G$ be a finite non-abelian group, and let $p$ be the smallest prime dividing $|G|$.
Then for every $r\ge 2$,
\[
P_r(G)\le \frac{p^r+p^{r-1}-1}{p^{2r-1}}.
\]
Equality holds if and only if $G/\Z(G)\cong C_p\times C_p$.
This is the known sharp bound for the multiple commutativity degree (translated into our notation for $P_r$);
see for instance \cite[Prop.~2.4]{RezaeiNiroomandErfanian2014}. We recall a short proof only to fix notation and equality cases for the later rigidity applications.
\end{theorem}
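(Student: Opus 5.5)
We argue by induction on $r$, with Theorem~\ref{thm:P2bound} as the base case $r=2$ and Corollary~\ref{cor:induction-step} as the inductive step. Write $\alpha:=|\Z(G)|/|G|$ and $B_r:=\frac{p^r+p^{r-1}-1}{p^{2r-1}}$. Lemma~\ref{lem:centerindex} gives $\alpha\le 1/p^2$.

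\emph{Inductive step.} Assume $P_{r-1}(G)\le B_{r-1}$. Lemma~\ref{lem:onestep} then gives
\[
P_r(G)\le \alpha B_{r-1}+\frac{1-\alpha}{p^{r-1}}=\frac{1}{p^{r-1}}+\alpha\Bigl(B_{r-1}-\frac{1}{p^{r-1}}\Bigr).
\]
The coefficient of $\alpha$ is nonnegative, because $B_{r-1}\ge 1/p^{r-1}$; indeed $p^{r-1}+p^{r-2}-1\ge p^{r-2}$. So the right-hand side is nondecreasing in $\alpha$, and we may substitute $\alpha=1/p^2$. It remains to check the identity
\[
\frac{1}{p^2}B_{r-1}+\Bigl(1-\frac1{p^2}\Bigr)\frac{1}{p^{r-1}}=B_r.
\]
Multiplying through by $p^{2r-1}$, the left side becomes $(p^{r-1}+p^{r-2}-1)+(p^2-1)p^{r-2}=p^r+p^{r-1}-1$, which is the numerator of $B_r$. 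This proves the bound.

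\emph{Equality.} Suppose first that $G/\Z(G)\cong C_p\times C_p$. Choose any $x\notin \Z(G)$ and set $A:=\C_G(x)=\langle \Z(G),x\rangle$. This $A$ is abelian of index $p$, and it is normal because it contains $\Z(G)$ and has prime index in a group whose central quotient has order $p^2$. Moreover $A\cap\Z(G)=\Z(G)$, so $f/n=1/p$. Theorem~\ref{prop:Pk-normal-index-p}, applied through Corollary~\ref{prop:Pk-normal-index-p1}, then gives $P_r(G)=p^{-r}+(1-p^{-r})p^{-(r-1)}=B_r$ exactly. Alternatively, one can check directly that equality holds at each step of the induction.

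Conversely, suppose $P_r(G)=B_r$ for some $r\ge 2$. Every inequality in the chain must then be tight. Since the coefficient $B_{r-1}-p^{-(r-1)}$ is strictly positive, tightness of the substitution $\alpha\le 1/p^2$ forces $\alpha=1/p^2$. Then $|G:\Z(G)|=p^2$, and since $G$ is non-abelian the quotient is not cyclic, so $G/\Z(G)\cong C_p\times C_p$. The index and noncyclicity facts alone give this conclusion, so we do not need to push the equality down to $r=2$.

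The main point requiring care is strict positivity of $B_{r-1}-p^{-(r-1)}$, which makes the dependence on $\alpha$ strictly increasing. This holds because $p^{r-2}-1+p^{r-1}>0\cdot$ relative to $p^{r-1}$; explicitly, $p^{2r-3}B_{r-1}-p^{r-2}=p^{r-1}-1>0$. Everything else is routine algebra. The auxiliary claim that $A$ is normal in the converse-of-equality direction can be avoided by invoking the formula already computed in Theorem~\ref{thm:gap-index-p-abelian}.
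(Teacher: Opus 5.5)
Your proof is correct and follows the paper's argument. Both use induction on $r$ via the one-step inequality of Lemma~\ref{lem:onestep}, monotonicity in $\alpha$ together with $\alpha\le 1/p^2$, and, for the converse of the equality case, an abelian normal subgroup $A\supseteq \Z(G)$ of index $p$ with $|A\cap\Z(G)|/|A|=1/p$ fed into Theorem~\ref{prop:Pk-normal-index-p}. You also observe that tightness of the $\alpha$-substitution alone (via strict positivity of $B_{r-1}-p^{-(r-1)}$) already forces $G/\Z(G)\cong C_p\times C_p$; this slightly streamlines the paper's list of equality conditions (i)--(iii), of which only (i) is actually needed.
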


\begin{proof}
We argue by induction on $r\ge 2$.

For $r=2$ this is Theorem~\ref{thm:P2bound}.

Assume the claim for $r-1$ and let $G$ be non-abelian. Set
\[
M_{r-1}:=\frac{p^{r-1}+p^{r-2}-1}{p^{2r-3}}.
\]
By the induction hypothesis, $P_{r-1}(G)\le M_{r-1}$ for every non-abelian $G$.
Lemma~\ref{lem:onestep} gives
\[
P_r(G)\le \alpha\,P_{r-1}(G)+\frac{1-\alpha}{p^{r-1}}
\le \alpha\,M_{r-1}+\frac{1-\alpha}{p^{r-1}}.
\]
Since $M_{r-1}>1/p^{r-1}$, the right-hand side is increasing in $\alpha$.
By Lemma~\ref{lem:centerindex}, $\alpha\le 1/p^2$ for non-abelian $G$, hence
\[
P_r(G)\le \frac{1}{p^2}M_{r-1}+\Big(1-\frac{1}{p^2}\Big)\frac{1}{p^{r-1}}
=\frac{p^r+p^{r-1}-1}{p^{2r-1}},
\]
a direct simplification.

For equality we need equality in each step:
(i) $\alpha=1/p^2$, so $|G:\Z(G)|=p^2$ and $G/\Z(G)\cong C_p\times C_p$ as before;
(ii) $P_{r-1}(G)=M_{r-1}$, which by induction forces the same structure;
(iii) in Lemma~\ref{lem:onestep}, for each $x\notin\Z(G)$ we need $|\C_G(x)|=|G|/p$ and
$P_{r-1}(\C_G(x))=1$, i.e.\ $\C_G(x)$ is abelian. When $G/\Z(G)\cong C_p\times C_p$, every
proper subgroup containing $\Z(G)$ has cyclic image in the quotient, hence is abelian,
so these conditions hold.

Conversely, if $G/\Z(G)\cong C_p\times C_p$, choose a subgroup $A$ with $\Z(G)\subseteq A$ and $[G:A]=p$.
Then $A$ is abelian and normal, and
\[
\frac{|A\cap \Z(G)|}{|A|}=\frac{|\Z(G)|}{|G|/p}=\frac1p.
\]
Hence Corollary~\ref{prop:Pk-normal-index-p1} applies, and the earlier exact formula \eqref{eq:Pk-index-p} from Theorem~\ref{prop:Pk-normal-index-p} gives
\[
P_r(G)=\frac{1}{p^r}+\left(1-\frac{1}{p^r}\right)\frac{1}{p^{r-1}}
=\frac{p^r+p^{r-1}-1}{p^{2r-1}}.
\]
Equivalently, in the present induction one has $\alpha=1/p^2$, every noncentral centralizer has order $|G|/p$
and is abelian, so the preceding inequalities are all equalities.
\end{proof}

\begin{corollary}[Quantitative deficit from the extremal bound]\label{cor:deficit-alpha}
Let $G$ be finite non-abelian and let $p$ be the smallest prime dividing $|G|$.
Write $\alpha:=|\Z(G)|/|G|$ and set
\[
M_{r-1}(p):=\frac{p^{r-1}+p^{r-2}-1}{p^{2r-3}}.
\]
Then for every $r\ge 2$,
\[
P_r(G)\le \alpha\,M_{r-1}(p)+\frac{1-\alpha}{p^{r-1}}.
\]
In particular, since $\alpha\le 1/p^2$ for non-abelian $G$, one has the explicit deficit estimate
\[
\frac{p^r+p^{r-1}-1}{p^{2r-1}}-P_r(G)
\ \ge\
\left(\frac{1}{p^2}-\alpha\right)\frac{p^{r-1}-1}{p^{2r-3}}.
\]
\end{corollary}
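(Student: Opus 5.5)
The plan is to combine Lemma~\ref{lem:onestep} with the induction hypothesis exactly as in the proof of Theorem~\ref{thm:Prbound}, and then to do one subtraction. The point of interest is that this time $\alpha$ is kept as a variable rather than replaced at once by $1/p^2$.

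\emph{The bound in $\alpha$.} For $r\ge 3$, Lemma~\ref{lem:onestep} gives
\[
P_r(G)\le \alpha P_{r-1}(G)+\frac{1-\alpha}{p^{r-1}}.
\]
Since $G$ is non-abelian, Theorem~\ref{thm:Prbound} applied at rank $r-1$ gives $P_{r-1}(G)\le M_{r-1}(p)$. Because $\alpha\ge 0$, substituting this into the previous inequality yields the first displayed bound.

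For $r=2$ the symbol $M_1(p)$ needs interpretation. The formula gives $M_1(p)=(p+1-1)/p=1$, and $P_1(G)=1$ holds trivially. So the same argument applies, with Lemma~\ref{lem:onestep} read at $r=2$ (equivalently, the first estimate in the proof of Theorem~\ref{thm:P2bound}). I would note this convention explicitly.

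\emph{The deficit estimate.} First rewrite the right-hand side as
\[
\frac{1}{p^{r-1}}+\alpha\Bigl(M_{r-1}(p)-\frac{1}{p^{r-1}}\Bigr).
\]
Next compute
\[
M_{r-1}(p)-\frac{1}{p^{r-1}}=\frac{p^{r-1}+p^{r-2}-1-p^{r-2}}{p^{2r-3}}=\frac{p^{r-1}-1}{p^{2r-3}}\ge 0.
\]
Evaluating the same affine expression at $\alpha=1/p^2$ gives exactly $\frac{p^r+p^{r-1}-1}{p^{2r-1}}$; this is the simplification already carried out in Theorem~\ref{thm:Prbound}. Subtracting the two values of the affine function, the extremal value minus $P_r(G)$ is at least
\[
\Bigl(\frac1{p^2}-\alpha\Bigr)\frac{p^{r-1}-1}{p^{2r-3}}.
\]
Lemma~\ref{lem:centerindex} guarantees $\alpha\le 1/p^2$, so this lower bound is nonnegative.

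\emph{Main obstacle.} There is no real difficulty here. The only points needing care are the algebraic verification that the slope equals $(p^{r-1}-1)/p^{2r-3}$ and the degenerate case $r=2$ handled by the convention $M_1=1$. For $r=2$ the slope vanishes, so the deficit estimate is trivially true.
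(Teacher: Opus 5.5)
Your proposal is correct and is essentially the paper's proof: Lemma~\ref{lem:onestep}, then Theorem~\ref{thm:Prbound} at rank $r-1$ to replace $P_{r-1}(G)$ by $M_{r-1}(p)$, then the affine identity with slope $M_{r-1}(p)-p^{-(r-1)}=(p^{r-1}-1)/p^{2r-3}$. Your explicit handling of $r=2$ via $M_1(p)=1=P_1(G)$ is more careful than the paper, which silently invokes Theorem~\ref{thm:Prbound} at rank $1$. One slip in your last line: at $r=2$ the slope is $(p-1)/p$, not $0$, so the deficit estimate there is not trivial. It does, however, follow from the same affine computation you already gave, so simply drop that remark.
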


\begin{proof}
Lemma~\ref{lem:onestep} gives $P_r(G)\le \alpha\,P_{r-1}(G)+(1-\alpha)p^{-(r-1)}$.
Apply Theorem~\ref{thm:Prbound} to bound $P_{r-1}(G)\le M_{r-1}(p)$, obtaining the first inequality.
The second inequality is the identity
\[
\Big(\frac{1}{p^2}M_{r-1}(p)+\Big(1-\frac{1}{p^2}\Big)\frac{1}{p^{r-1}}\Big)-\Big(\alpha\,M_{r-1}(p)+\frac{1-\alpha}{p^{r-1}}\Big)
=\left(\frac{1}{p^2}-\alpha\right)\left(M_{r-1}(p)-\frac{1}{p^{r-1}}\right),
\]
together with $M_{r-1}(p)-p^{-(r-1)}=(p^{r-1}-1)/p^{2r-3}$.
\end{proof}

\subsection{Specialization to commuting triples}
\begin{corollary}[Sharp bound for $P_3$]\label{cor:P3bound}
Let $G$ be a finite non-abelian group and $p$ the smallest prime dividing $|G|$.
Then
\[
P_3(G)\le \frac{p^3+p^2-1}{p^5}.
\]
In particular, if $|G|$ is even then $P_3(G)\le 11/32$.
Equality holds if and only if $G/\Z(G)\cong C_p\times C_p$.
\end{corollary}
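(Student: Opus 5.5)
This is the case $r=3$ of Theorem~\ref{thm:Prbound}, so the plan is to specialize that theorem and then check the arithmetic and the even-order consequence. There is essentially nothing new to prove.

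First, take $r=3$ in Theorem~\ref{thm:Prbound}. The bound becomes
\[
\frac{p^r+p^{r-1}-1}{p^{2r-1}}\Big|_{r=3}=\frac{p^3+p^2-1}{p^5},
\]
which gives the displayed inequality. The equality characterization $G/\Z(G)\cong C_p\times C_p$ carries over verbatim from the equality clause of that theorem.

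For the statement about groups of even order, note the following.
\begin{itemize}
\item If $|G|$ is even, the smallest prime divisor of $|G|$ is $p=2$.
\item Substituting $p=2$ gives $(8+4-1)/32=11/32$.
\end{itemize}
So non-abelian groups of even order satisfy $P_3(G)\le 11/32$. Equality holds exactly when $G/\Z(G)\cong C_2\times C_2$, for example $D_8$ or $Q_8$.

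As an independent check of the equality case, one can apply Corollary~\ref{prop:Pk-normal-index-p1} with a maximal subgroup $A\supseteq\Z(G)$ of index $p$. This gives $P_3=p^{-3}+(1-p^{-3})p^{-2}$, which agrees with the bound.

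The only possible obstacle would be to read ``in particular'' as an absolute bound for all groups of even order, rather than for those with $p=2$. It is not needed, because $p=2$ is forced whenever $|G|$ is even.
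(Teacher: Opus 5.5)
Your proposal is correct and is exactly the paper's argument: it specializes Theorem~\ref{thm:Prbound} to $r=3$ and substitutes $p=2$ for even order, getting $(8+4-1)/32=11/32$. Your extra check of the equality case via Corollary~\ref{prop:Pk-normal-index-p1} simply repeats the converse direction already contained in the proof of Theorem~\ref{thm:Prbound}.
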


\begin{proof}
This is Theorem~\ref{thm:Prbound} with $r=3$.
For even $|G|$, the smallest prime is $p=2$, giving $(8+4-1)/32=11/32$.
\end{proof}

\subsection{Specialization to commuting quadruples}
\begin{corollary}[Sharp bound for $P_4$]\label{cor:P4bound}
Let $G$ be a finite non-abelian group and let $p$ be the smallest prime dividing $|G|$.
Then
\[
P_4(G)\le \frac{p^4+p^3-1}{p^7}.
\]
In particular, if $|G|$ is even then $P_4(G)\le 23/128$.
Moreover, among all finite non-abelian groups the maximal possible value of $P_4(G)$ is $23/128$.
Equality holds if and only if $G/\Z(G)\cong C_p\times C_p$ (and in the global extremal case necessarily $p=2$).
\end{corollary}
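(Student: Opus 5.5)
This corollary is the case $r=4$ of Theorem~\ref{thm:Prbound}, together with a monotonicity argument in $p$. First I substitute $r=4$ into the bound $(p^r+p^{r-1}-1)/p^{2r-1}$, which gives $(p^4+p^3-1)/p^7$. The equality criterion $G/\Z(G)\cong C_p\times C_p$ comes directly from the equality clause of Theorem~\ref{thm:Prbound}. For $p=2$ the bound evaluates to $(16+8-1)/128=23/128$.

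For the global maximum I will show that the function
\[
h(p):=\frac{p^4+p^3-1}{p^7}=p^{-3}+p^{-4}-p^{-7}
\]
is strictly decreasing for real $p\ge 2$. Its derivative is
\[
h'(p)=-3p^{-4}-4p^{-5}+7p^{-8}=p^{-8}\bigl(7-3p^4-4p^3\bigr).
\]
This is negative for $p\ge 2$, since $3p^4+4p^3\ge 80>7$. It follows that every non-abelian $G$ with smallest prime $p\ge3$ satisfies $P_4(G)\le h(3)<h(2)=23/128$. Every non-abelian $G$ with $p=2$ satisfies $P_4(G)\le 23/128$. So $23/128$ is an upper bound over all finite non-abelian groups.

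To see that the value is attained I need a group with $G/\Z(G)\cong C_2\times C_2$. I will take $D_8$ (equivalently $Q_8$). Its centre is $\{1,a^2\}$ and the quotient has order $4$ and is noncyclic. As a check, Corollary~\ref{cor:dihedral-all-Pr} with $n=4$ and $f=2$ gives
\[
P_4(D_8)=\tfrac1{16}+\tfrac{15}{16}\cdot\tfrac18=\tfrac{23}{128}.
\]

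Finally, the parenthetical claim is that global extremality forces $p=2$. Suppose $P_4(G)=23/128$ and the smallest prime of $|G|$ is $p$. If $p$ were odd, then $P_4(G)\le h(p)\le h(3)<23/128$, which is impossible. Hence $p=2$, and the equality case of Theorem~\ref{thm:Prbound} then gives $G/\Z(G)\cong C_2\times C_2$.

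The only step that is not immediate is this strict monotonicity of $h$, which the derivative computation above settles. Everything else is direct substitution into Theorem~\ref{thm:Prbound}.
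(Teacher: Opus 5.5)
Your proposal is correct and follows the paper's proof: specialize Theorem~\ref{thm:Prbound} to $r=4$, then compare $p=2$ with $p\ge 3$. The differences are cosmetic. Where you show $h$ is decreasing via its derivative, the paper simply bounds $p^{-3}+p^{-4}-p^{-7}\le \frac{1}{27}+\frac{1}{81}<\frac{23}{128}$ for $p\ge 3$. You also exhibit $D_8$ explicitly to show that $23/128$ is attained, which the paper leaves implicit in the equality clause.
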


\begin{proof}
This is Theorem~\ref{thm:Prbound} with $r=4$.
If $|G|$ is even then $p=2$ and $(2^4+2^3-1)/2^7=23/128$.
For the global maximum over non-abelian groups, note that for $p\ge 3$,
\[
\frac{p^4+p^3-1}{p^7}= \frac{1}{p^3}+\frac{1}{p^4}-\frac{1}{p^7}\le \frac{1}{27}+\frac{1}{81}<\frac{23}{128},
\]
so the largest value occurs at $p=2$.
\end{proof}

\begin{corollary}[Universal sharp constant and extremal isoclinism class]\label{cor:universal-bound}
Let $G$ be a finite non-abelian group. Then for every $r\ge 2$,
\[
P_r(G)\le c_r:=\frac{2^r+2^{r-1}-1}{2^{2r-1}}=\frac{3\cdot 2^{r-1}-1}{2^{2r-1}}.
\]
Equality holds if and only if $G/\Z(G)\cong C_2\times C_2$, i.e.\ if and only if $G$ lies in the isoclinism
class of $D_8$ (equivalently of $Q_8$).

In particular,
\[
c_2=\frac58,\qquad c_3=\frac{11}{32},\qquad c_4=\frac{23}{128},\qquad c_5=\frac{47}{512},\ \dots
\]
\end{corollary}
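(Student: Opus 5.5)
The plan is to reduce everything to Theorem~\ref{thm:Prbound} by maximizing the bound $M_r(p):=\frac{p^r+p^{r-1}-1}{p^{2r-1}}$ over the prime $p$. For a non-abelian $G$ with smallest prime divisor $p$, Theorem~\ref{thm:Prbound} gives $P_r(G)\le M_r(p)$. So it suffices to show two things: $M_r(p)<M_r(2)=c_r$ for every odd prime $p$, and equality $P_r(G)=c_r$ forces $p=2$ together with $G/\Z(G)\cong C_2\times C_2$.

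For the inequality, write
\[
M_r(p)=\frac{1}{p^{r-1}}+\frac{1}{p^{r}}-\frac{1}{p^{2r-1}}.
\]
The function $x\mapsto x^{-(r-1)}+x^{-r}-x^{-(2r-1)}$ is strictly decreasing for $x\ge 2$. To see this, note that the derivative is
\[
-(r-1)x^{-r}-rx^{-r-1}+(2r-1)x^{-2r}.
\]
The positive term $(2r-1)x^{-2r}$ is dominated by $rx^{-r-1}$ as soon as $x^{r-1}\ge (2r-1)/r$. This holds for $x\ge 2$ and $r\ge 2$, since $2^{r-1}\ge 2>(2r-1)/r$.

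A cruder route avoids calculus and mirrors the proof of Corollary~\ref{cor:P4bound}. For $p\ge 3$,
\[
M_r(p)<\frac{1}{3^{r-1}}+\frac{1}{3^r}=\frac{4}{3^r},
\qquad
c_r=\frac{3\cdot 2^{r-1}-1}{2^{2r-1}}\ge \frac{2\cdot 2^{r-1}}{2^{2r-1}}=\frac{2}{2^{r-1}}=\frac{4}{2^{r}}.
\]
Since $4/3^r<4/2^r$, we get $M_r(p)<c_r$. I would use this cruder version, as it is cleaner.

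Hence $P_r(G)\le M_r(p)\le c_r$, with the second inequality strict when $p$ is odd. If equality $P_r(G)=c_r$ holds, then $p=2$ and $P_r(G)=M_r(2)$. The equality case of Theorem~\ref{thm:Prbound} then gives $G/\Z(G)\cong C_2\times C_2$. Conversely, if $G/\Z(G)\cong C_2\times C_2$, then $|G|$ is even, so $p=2$ and Theorem~\ref{thm:Prbound} gives equality.

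For the isoclinism phrasing, I would use that $D_8/\Z(D_8)\cong C_2\times C_2$ and that the commutator map on $(C_2\times C_2)^2\to G'\cong C_2$ is the unique nondegenerate alternating form. Concretely, for any $G$ with $G/\Z(G)\cong C_2^2$, $G'$ has order $2$: it is generated by the single commutator $[x,y]$ of two generator lifts, and $[x,y]^2=[x^2,y]=1$ because $x^2\in\Z(G)$. So any two such groups are isoclinic, and this class contains $D_8$ and $Q_8$. The converse holds because isoclinism preserves $G/\Z(G)$ by definition.

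The displayed values $c_2,\dots,c_5$ follow by direct substitution into $c_r$. I expect no real obstacle. The only point needing care is the strictness of $M_r(p)<c_r$ for odd $p$, since this is what pins the equality case to $p=2$, and the estimate above handles it uniformly in $r$.
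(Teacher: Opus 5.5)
Your overall structure matches the paper's proof. You bound $P_r(G)$ by $M_r(p)$ via Theorem~\ref{thm:Prbound}, show $M_r(p)<M_r(2)=c_r$ for odd $p$ so that equality forces $p=2$, and identify the equality class through $[x,y]^2=[x^2,y]=1$ together with the uniqueness of the nondegenerate alternating form on $\mathbb F_2^2$. Your derivative argument is also correct. It is exactly the monotonicity step the paper uses, where the derivative is written as $p^{-2r}\bigl(2r-1-(r-1)p^r-rp^{r-1}\bigr)<0$, and you actually verify the sign.

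However, the ``cruder route'' you say you would use contains a false step. In fact $\frac{2\cdot 2^{r-1}}{2^{2r-1}}=\frac{2^r}{2^{2r-1}}=2^{1-r}=\frac{2}{2^r}$, not $\frac{2}{2^{r-1}}=\frac{4}{2^r}$. Moreover $c_r=\frac{3}{2^r}-\frac{1}{2^{2r-1}}<\frac{4}{2^r}$ for every $r$; for example, $c_2=\frac58<1=\frac{4}{2^2}$. So the chain $M_r(p)<\frac{4}{3^r}<\frac{4}{2^r}\le c_r$ fails at its last link, and it is precisely the strictness step that pins the equality case to $p=2$.

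The conclusion survives with the correct lower bound. You have $c_r\ge \frac{2}{2^r}$, and $\frac{4}{3^r}<\frac{2}{2^r}$ because $(3/2)^r\ge \frac94>2$ for $r\ge 2$. Alternatively, simply keep your derivative argument. With either repair, $M_r(p)<c_r$ holds strictly for $p\ge 3$, and the rest of your proof goes through exactly as in the paper.
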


\begin{proof}
Let $p$ be the smallest prime dividing $|G|$.
By Theorem~\ref{thm:Prbound},
\[
P_r(G)\le \frac{p^r+p^{r-1}-1}{p^{2r-1}}.
\]
Since $p\ge 2$ and
\[
\frac{p^r+p^{r-1}-1}{p^{2r-1}}=p^{-(r-1)}+p^{-r}-p^{-(2r-1)}
\]
is strictly decreasing in $p$ for $p\ge 2$ (indeed, its derivative equals
$p^{-2r}\bigl(2r-1-(r-1)p^r-rp^{r-1}\bigr)<0$), we obtain
\[
P_r(G)\le \frac{2^r+2^{r-1}-1}{2^{2r-1}}=c_r.
\]
If equality holds, then necessarily $p=2$ and equality holds in Theorem~\ref{thm:Prbound}, forcing
$G/\Z(G)\cong C_2\times C_2$.
Conversely, if $G/\Z(G)\cong C_2\times C_2$, then Theorem~\ref{thm:Prbound} gives equality for all $r$.

Finally, assume $G/\Z(G)\cong C_2\times C_2$.
Then $G/\Z(G)$ is elementary abelian of rank $2$, so $G$ has class $2$ and $G'$ is generated by a single nontrivial commutator $[x,y]$.
For lifts $x,y$ of a basis of $G/\Z(G)$ one has $x^2,y^2\in \Z(G)$, and the class-$2$ identity
\[
[x^2,y]=[x,y]^2
\]
shows that every commutator has order dividing $2$.
Hence $G'\cong C_2$, and the commutator pairing
\[
G/\Z(G)\times G/\Z(G)\longrightarrow G'
\]
is the unique nondegenerate alternating bilinear form on a $2$-dimensional $\mathbb F_2$-vector space.
The same is true for both $D_8$ and $Q_8$, so $G$ is isoclinic to $D_8$ (equivalently to $Q_8$).
\end{proof}

\section{A stability gap near \texorpdfstring{$11/32$}{11/32}}

The value $11/32$ is classical in ordinary commuting-probability theory: Rusin classified finite groups with ordinary commuting probability greater than $11/32$ \cite{Rusin1979}. In the present higher-commutativity setting, the same number appears as the extremal value of $P_3$ for the family of non-abelian groups with
$G/\Z(G)\cong C_2\times C_2$ (e.g.\ $D_8$ and $Q_8$ and their central products with abelian groups).

The following proposition shows that, for commuting triples, to get \emph{close} to this value one must already be in the same extremal isoclinism family.

\begin{proposition}[Gap stability for commuting triples]\label{prop:gap}
Let $G$ be a finite non-abelian group and let $\alpha=|\Z(G)|/|G|$.
Then
\[
P_3(G)\le \frac{1}{4}+\frac{\alpha}{4}+\frac{\alpha^2}{2}.
\]
Consequently, if $P_3(G)>\frac{11}{36}$ then $|G:\Z(G)|=4$ and hence $G/\Z(G)\cong C_2\times C_2$.
\end{proposition}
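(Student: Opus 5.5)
The plan is to chain the one-step inequality of Lemma~\ref{lem:onestep} twice, replacing the smallest prime $p$ by the worst case $p=2$ throughout, and then use the arithmetic of the central index.

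\emph{Step 1 (bound on $P_2$).} As in the proof of Theorem~\ref{thm:P2bound}, split $|G|^2P_2(G)=\sum_x|\C_G(x)|$ into central and noncentral $x$. Lemma~\ref{lem:centralizerindexp} gives $|\C_G(x)|\le |G|/p\le |G|/2$ for noncentral $x$, so
\[
P_2(G)\le \alpha+\frac{1-\alpha}{2}.
\]

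\emph{Step 2 (bound on $P_3$).} Lemma~\ref{lem:onestep} with $r=3$, again using $p\ge 2$, gives $P_3(G)\le \alpha P_2(G)+(1-\alpha)/4$. Substituting Step~1,
\[
P_3(G)\le \alpha\Big(\alpha+\frac{1-\alpha}{2}\Big)+\frac{1-\alpha}{4}
=\frac14+\frac{\alpha}{4}+\frac{\alpha^2}{2},
\]
which is the displayed inequality. The right-hand side is increasing in $\alpha\ge 0$.

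\emph{Step 3 (the gap).} This is the only point needing care: we must find the right threshold on $\alpha$ when $|G:\Z(G)|\neq 4$. Since $G$ is non-abelian, $G/\Z(G)$ is not cyclic. Hence $|G:\Z(G)|$ is not $1$, not a prime ($2,3,5$), and by hypothesis not $4$. Therefore $|G:\Z(G)|\ge 6$, so $\alpha\le 1/6$. Plugging this in gives
\[
P_3(G)\le \frac14+\frac1{24}+\frac1{72}=\frac{22}{72}=\frac{11}{36}.
\]
Consequently $P_3(G)>11/36$ forces $|G:\Z(G)|=4$. A noncyclic group of order $4$ is $C_2\times C_2$, which finishes the proof.
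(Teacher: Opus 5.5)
Your proposal is correct and follows the paper's proof essentially step for step. It uses the same $P_2$ estimate $P_2(G)\le \alpha+(1-\alpha)/2$, the same application of Lemma~\ref{lem:onestep} with $r=3$ and $p\ge 2$, and the same monotonicity of $\tfrac14+\tfrac{\alpha}{4}+\tfrac{\alpha^2}{2}$ together with its value $11/36$ at $\alpha=1/6$. The only cosmetic difference is that you argue the gap contrapositively (index $\neq 4$ forces index $\ge 6$), whereas the paper first deduces $\alpha>1/6$ and then rules out the indices $1,2,3,5$.
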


\begin{proof}
We first show the bound.
Let $p$ be the smallest prime divisor of $|G|$.
By Lemma~\ref{lem:onestep},
\[
P_3(G)\le \alpha\,P_2(G)+\frac{1-\alpha}{p^2}
\le \alpha\,P_2(G)+\frac{1-\alpha}{4},
\]
since $p\ge 2$.
Also,
\[
P_2(G)=\frac{1}{|G|^2}\sum_{x\in G}|\C_G(x)|
\le \alpha+\frac{1}{|G|^2}\sum_{x\notin\Z(G)}\frac{|G|}{p}
=\alpha+\frac{1-\alpha}{p}
\le \alpha+\frac{1-\alpha}{2}
=\frac{1}{2}+\frac{\alpha}{2}.
\]
Substituting this into the previous inequality gives
\[
P_3(G)\le \alpha\Big(\frac12+\frac{\alpha}{2}\Big)+\frac{1-\alpha}{4}
=\frac14+\frac{\alpha}{4}+\frac{\alpha^2}{2}.
\]

Now suppose $P_3(G)>11/36$.
The function $f(\alpha)=\frac14+\frac{\alpha}{4}+\frac{\alpha^2}{2}$ is increasing for $\alpha\ge 0$,
and $f(1/6)=11/36$. Hence $P_3(G)>11/36$ forces $\alpha>1/6$, i.e.\ $|G:\Z(G)|<6$.
If $|G:\Z(G)|=1$ then $G$ is abelian, excluded; if $|G:\Z(G)|=2,3,5$ then $G/\Z(G)$ is cyclic so $G$
would be abelian. Therefore $|G:\Z(G)|=4$, and non-abelianity forces $G/\Z(G)\cong C_2\times C_2$.
\end{proof}

\begin{remark}[A general stability mechanism]\label{rem:general-stability}
The proof of Theorem~\ref{thm:Prbound} yields more than the sharp extremal constant: it gives a quantitative
deficit in terms of $\alpha=|\Z(G)|/|G|$; see Corollary~\ref{cor:deficit-alpha}.
Thus, for fixed $r$ and $p$, being within $\varepsilon$ of the extremal bound forces $\alpha$ to be within
$O(\varepsilon)$ of $1/p^2$.
The remaining step in a sharp stability theorem is to control further deficits coming from the distribution of
centralizer indices among noncentral elements.
\end{remark}

\begin{remark}
The numerical gap
\[
\frac{11}{32}-\frac{11}{36}=\frac{11}{288}\approx 0.03819
\]
is not claimed to be optimal; it comes from a short inequality that only uses
index-$2$ information about noncentral centralizers.
It nevertheless provides a clean \emph{rigidity window}: any non-abelian group whose
commuting-triple probability exceeds $11/36$ must already have central quotient of order $4$.
\end{remark}

\begin{proposition}[A $p$-group rigidity ladder for $P_r$]\label{prop:pgroup-ladder-Pr}
Let $p$ be a prime, let $G$ be a finite non-abelian $p$-group, and fix $r\ge 2$.
Write $|G:\Z(G)|=p^d$ with $d\ge 2$, so $\alpha:=|\Z(G)|/|G|=p^{-d}$.
Then
\begin{equation}\label{eq:pgroup-ladder}
P_r(G)\le U_{p,r}(d):=p^{-(r-1)}+(p-1)\sum_{j=1}^{r-1}p^{-(r+(d-1)j)}
=p^{-(r-1)}+(p-1)p^{-(r+d-1)}\frac{1-p^{-(d-1)(r-1)}}{1-p^{-(d-1)}}.
\end{equation}
Moreover, the sequence $d\mapsto U_{p,r}(d)$ is strictly decreasing for $d\ge 2$ and satisfies
$\lim_{d\to\infty}U_{p,r}(d)=p^{-(r-1)}$.
In particular, if $d\ge d_0$ then $P_r(G)\le U_{p,r}(d_0)$, and if $P_r(G)>U_{p,r}(d_0)$ then
$|G:\Z(G)|\le p^{d_0-1}$.
\end{proposition}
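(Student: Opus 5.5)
The plan is to unroll the one-step inequality of Lemma~\ref{lem:onestep} all the way down to $P_1(G)=1$, with $\alpha=p^{-d}$ fixed throughout. For a $p$-group the smallest prime divisor of $|G|$ is $p$ itself, so Lemma~\ref{lem:onestep} gives, for every $s\ge 2$,
\[
P_s(G)\le \alpha P_{s-1}(G)+(1-\alpha)p^{-(s-1)}.
\]
Here the same $G$ and the same $\alpha$ appear at every level, so no induction over subgroups is needed. Iterating from $s=r$ down to $s=2$ and using $P_1(G)=1$, I expect
\[
P_r(G)\le \alpha^{r-1}+(1-\alpha)\sum_{j=0}^{r-2}\alpha^{j}p^{-(r-1-j)}.
\]
This step is a routine induction on $r$: the coefficient $\alpha\ge0$ preserves the inequality at each stage.

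Next I substitute $\alpha=p^{-d}$, which gives $\alpha^jp^{-(r-1-j)}=p^{-(r-1)-(d-1)j}$, and rewrite the bound as a telescoped sum. Expanding $(1-\alpha)$ splits it into
\[
\sum_{j=0}^{r-2}p^{-(r-1)-(d-1)j}\;-\;\sum_{j=0}^{r-2}p^{-(r-1)-(d-1)j-d}\;+\;p^{-d(r-1)}.
\]
\begin{itemize}
\item In the second sum, reindex by $j'=j+1$. The exponent becomes $-r-(d-1)j'$, so the sum runs over $j'=1,\dots,r-1$.
\item The isolated term $p^{-d(r-1)}=p^{-(r-1)-(d-1)(r-1)}$ is exactly the missing $j=r-1$ term of the first sum.
\end{itemize}
After these two adjustments, the $j=0$ term $p^{-(r-1)}$ stands alone. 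The remaining terms pair up as
\[
p^{-(r-1)-(d-1)j}-p^{-r-(d-1)j}=(p-1)p^{-(r+(d-1)j)},\qquad j=1,\dots,r-1.
\]
This is precisely $U_{p,r}(d)$. The closed form on the right of \eqref{eq:pgroup-ladder} then follows by summing the geometric series with ratio $p^{-(d-1)}<1$, which is valid since $d\ge2$.

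For monotonicity I argue term by term. Each summand $p^{-(r+(d-1)j)}$ with $j\ge1$ is strictly decreasing in $d$. Since $r\ge2$, the sum contains at least the $j=1$ term, so $U_{p,r}(d)$ is strictly decreasing in $d$. As $d\to\infty$, each of the finitely many summands tends to $0$, which gives the limit $p^{-(r-1)}$. The final two assertions are then immediate.
\begin{itemize}
\item If $d\ge d_0$, then $P_r(G)\le U_{p,r}(d)\le U_{p,r}(d_0)$.
\item If $P_r(G)>U_{p,r}(d_0)$, then by contraposition $d\le d_0-1$, that is, $|G:\Z(G)|\le p^{d_0-1}$.
\end{itemize}

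The only step needing care is the reindexing bookkeeping that matches the unrolled bound to the stated form of $U_{p,r}(d)$, in particular absorbing the boundary term $\alpha^{r-1}$. I would verify the small case $r=2$ directly as a check: both sides equal $p^{-1}+(p-1)p^{-(d+1)}$, which agrees with the bound $\alpha+(1-\alpha)/p$ from the proof of Theorem~\ref{thm:P2bound}.
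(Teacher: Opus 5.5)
Your proposal is correct and matches the paper's proof: the paper also unrolls Lemma~\ref{lem:onestep} through the recursive bound $f_k(\alpha)=\alpha f_{k-1}(\alpha)+(1-\alpha)p^{-(k-1)}$ with $f_1=1$, arrives at the same expression $\alpha^{r-1}+(1-\alpha)\sum_{j=0}^{r-2}\alpha^{j}p^{-(r-1-j)}$, substitutes $\alpha=p^{-d}$, and gets monotonicity and the limit term by term. Your explicit reindexing just writes out the simplification the paper leaves to the reader, and your check at $r=2$ is consistent with it.
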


\begin{proof}
Let $\alpha:=|\Z(G)|/|G|=p^{-d}$.
Since $G$ is a $p$-group, $p$ is the smallest prime divisor of $|G|$.

For $k\ge 1$ define $f_k(\alpha)$ recursively by $f_1(\alpha)=1$ and, for $k\ge 2$,
\[
f_k(\alpha):=\alpha\,f_{k-1}(\alpha)+\frac{1-\alpha}{p^{k-1}}.
\]
Lemma~\ref{lem:onestep} implies by induction on $k$ that $P_k(G)\le f_k(\alpha)$ for all $k\ge 1$.
Unwinding the recursion gives
\[
f_k(\alpha)=\alpha^{k-1}+(1-\alpha)\sum_{j=0}^{k-2}\frac{\alpha^{j}}{p^{k-1-j}}
=\frac{1}{p^{k-1}}+\frac{p-1}{p}\alpha^{k-1}+(p-1)\sum_{j=1}^{k-2}\frac{\alpha^{j}}{p^{k-j}}.
\]
In particular, $f_k(\alpha)$ is a polynomial in $\alpha$ with nonnegative coefficients, hence is increasing on $[0,1]$.

Setting $k=r$ and $\alpha=p^{-d}$ and simplifying yields $f_r(p^{-d})=U_{p,r}(d)$ as in \eqref{eq:pgroup-ladder},
so $P_r(G)\le U_{p,r}(d)$.
The monotonicity in $d$ is immediate from the expression in \eqref{eq:pgroup-ladder}, since each summand is strictly decreasing in $d$.
Finally, the limit as $d\to\infty$ follows from \eqref{eq:pgroup-ladder} since the finite sum tends to $0$.
\end{proof}

\begin{corollary}[A $p$-group rigidity window for $P_r$]\label{cor:pgroup-gap-Pr}
Let $p$ be a prime, let $G$ be a finite non-abelian $p$-group, and fix $r\ge 2$.
Then either $G/\Z(G)\cong C_p\times C_p$, in which case
\[
P_r(G)=\frac{p^r+p^{r-1}-1}{p^{2r-1}},
\]
or else
\begin{equation}\label{eq:pgroup-gap-Pr}
P_r(G)\le B_{p,r}:=\frac{p^{2r-1}+p^{2r-3}-p^{2r-4}+\cdots+p-1}{p^{3r-2}}
=\frac{p^r+p^{r-1}-1}{p^{2r-1}}-\frac{(p^r-1)(p^{r-1}-1)}{(p+1)p^{3r-2}}.
\end{equation}
In particular, if $P_r(G)>B_{p,r}$ then necessarily $|G:\Z(G)|=p^2$ and $G/\Z(G)\cong C_p\times C_p$.
\end{corollary}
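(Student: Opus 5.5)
The plan is a two-case split on $d$, where $|G:\Z(G)|=p^d$, followed by an explicit evaluation of the ladder bound of Proposition~\ref{prop:pgroup-ladder-Pr} at $d=3$.

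\emph{Case $d=2$.} Since $G$ is a non-abelian $p$-group, Lemma~\ref{lem:centerindex} gives $d\ge 2$. If $d=2$, then $G/\Z(G)$ has order $p^2$ and cannot be cyclic, because $G$ is non-abelian. Hence $G/\Z(G)\cong C_p\times C_p$. The exact value of $P_r(G)$ is then the equality case of Theorem~\ref{thm:Prbound}, or directly Theorem~\ref{prop:Pk-normal-index-p} via Corollary~\ref{prop:Pk-normal-index-p1} with $f/n=1/p$.

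\emph{Case $d\ge 3$.} Proposition~\ref{prop:pgroup-ladder-Pr} gives $P_r(G)\le U_{p,r}(d)$. Since $d\mapsto U_{p,r}(d)$ is strictly decreasing, this yields $P_r(G)\le U_{p,r}(3)$. The final sentence of the corollary is then the contrapositive: if $P_r(G)>B_{p,r}$, then $d\ge 3$ is impossible, so $d=2$ and the first case applies.

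\emph{Identifying $U_{p,r}(3)$ with $B_{p,r}$.} The only real content is checking $U_{p,r}(3)=B_{p,r}$ in both displayed forms. First sum the geometric series:
\[
U_{p,r}(3)=p^{-(r-1)}+(p-1)p^{-r}\sum_{j=1}^{r-1}p^{-2j}
=p^{-(r-1)}+\frac{p^{-r}\bigl(1-p^{-2(r-1)}\bigr)}{p+1}.
\]
Next subtract this from $M_r:=p^{-(r-1)}+p^{-r}-p^{-(2r-1)}$. After clearing denominators, the difference should reduce to
\[
\frac{p^{-r}\bigl(p-(p+1)p^{1-r}+p^{2-2r}\bigr)}{p+1}.
\]
Multiplying numerator and denominator by $p^{2r-2}$ turns the numerator into $p^{2r-1}-p^r-p^{r-1}+1=(p^r-1)(p^{r-1}-1)$ over $(p+1)p^{3r-2}$. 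This is the second expression for $B_{p,r}$.

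For the first expression, put $U_{p,r}(3)$ over the common denominator $p^{3r-2}$. One gets
\[
p^{2r-1}+\frac{p-1}{p+1}\bigl(p^{2r-2}-1\bigr)\cdot\frac{p^{2}}{p^{2}-1}\cdot\frac{1}{1}\Big|_{\text{rescaled}},
\]
and it is cleaner to expand $\sum_{j=1}^{r-1}(p-1)p^{2r-2-2j}$ directly. This telescopes to the alternating sum $p^{2r-3}-p^{2r-4}+\cdots+p-1$.

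The main (though mild) obstacle is this bookkeeping, in particular getting the indices of the alternating tail right. Everything else is an immediate consequence of earlier results.
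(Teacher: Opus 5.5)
Your proposal is correct and is essentially the paper's proof: you split on $d$ with $|G:\Z(G)|=p^d$, treat $d=2$ via the equality case of Theorem~\ref{thm:Prbound}, and for $d\ge 3$ apply Proposition~\ref{prop:pgroup-ladder-Pr} with $d_0=3$. Your computation of $M_r-U_{p,r}(3)$ and the expansion $p^{3r-2}U_{p,r}(3)=p^{2r-1}+\sum_{i=0}^{r-2}(p^{2i+1}-p^{2i})$ supply algebra that the paper only asserts; the one display with the factor $\frac{p^{2}}{p^{2}-1}$ and the ``rescaled'' annotation is wrong as written (the correct middle term is $\frac{p^{2r-2}-1}{p+1}$), so delete it and keep the direct expansion that follows.
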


\begin{proof}
Write $|G:\Z(G)|=p^d$ with $d\ge 2$.
If $d=2$ then $G/\Z(G)$ has order $p^2$ and is not cyclic (otherwise $G$ would be abelian), hence
$G/\Z(G)\cong C_p\times C_p$, and Theorem~\ref{thm:Prbound} gives the stated value.
If $d\ge 3$ then Proposition~\ref{prop:pgroup-ladder-Pr} with $d_0=3$ gives
\[
P_r(G)\le U_{p,r}(3),
\]
and evaluating \eqref{eq:pgroup-ladder} at $d=3$ yields
\[
U_{p,r}(3)
=p^{-3(r-1)}+\Big(1-p^{-3}\Big)p^{-(r-1)}\sum_{j=0}^{r-2}p^{-2j}
=p^{-3(r-1)}+\Big(1-p^{-3}\Big)p^{-(r-1)}\frac{1-p^{-2(r-1)}}{1-p^{-2}},
\]
which simplifies to the explicit expressions in \eqref{eq:pgroup-gap-Pr}.
\end{proof}

\section{Class-\texorpdfstring{$2$ exponent-$p$}{2 exponent-p} groups and Heisenberg-type families}\label{sec:pgroup_orderp}

For class-$2$ exponent-$p$ $p$-groups, commutators are central and both $G/\Z(G)$ and $G'$ are naturally
$\mathbb F_p$-vector spaces, so the commutator induces an alternating bilinear map
$\,G/\Z(G)\times G/\Z(G)\to G'\,$ (equivalently, a linear map $\Lambda^2(G/\Z(G))\to G'$).
We begin with the extremal regime $|G'|=p$, where this map is symplectic and yields a clean recursion whose
output depends only on the single parameter $|G:\Z(G)|$.
We then record a general ``commutator tensor'' reduction valid for all class-$2$ exponent-$p$ groups and a
uniform $q$-symplectic recursion for Heisenberg-type groups over $\mathbb F_{p^m}$ (so $|G'|=p^m$), culminating in a
closed all-$r$ formula, explicit pole data, and rigidity inside that family.

\subsection{Symplectic reduction}

\begin{lemma}[Centrality and exponent-\texorpdfstring{$p$}{p} quotient]\label{lem:orderp-commutator-class2}
Let $G$ be a finite $p$-group with $|G'|=p$.
Then $G'\le \Z(G)$ and $G/\Z(G)$ is elementary abelian.
\end{lemma}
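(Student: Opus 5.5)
We prove the two assertions in turn, using only that $G'$ is a normal subgroup of prime order in a $p$-group.

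\emph{Step 1: $G'\le \Z(G)$.} Since $G'\trianglelefteq G$ and $|G'|=p$, conjugation gives a homomorphism $G\to\Aut(G')\cong C_{p-1}$. Its image is a $p$-group whose order divides $p-1$, so the image is trivial. Hence $G$ centralizes $G'$, i.e.\ $G'\le\Z(G)$, and $G$ has nilpotency class at most $2$. (Equivalently: a nontrivial normal subgroup of a $p$-group meets the center nontrivially, and $G'$ has prime order.)

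\emph{Step 2: $G/\Z(G)$ is elementary abelian.} It is abelian because $G'\le\Z(G)$ by Step~1. For exponent $p$, we use the class-$2$ identity already invoked in the proof of Corollary~\ref{cor:universal-bound}: since commutators are central, the map $y\mapsto[x,y]$ is a homomorphism $G\to G'$ for each fixed $x$, and likewise in the first variable. Therefore
\[
[x^p,y]=[x,y]^p=1,
\]
because $G'$ has exponent $p$. So $x^p$ commutes with every $y$, giving $x^p\in\Z(G)$ for all $x\in G$. Thus $G/\Z(G)$ is abelian of exponent dividing $p$, hence elementary abelian.

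\emph{Expected obstacle.} The only point needing care is bilinearity of the commutator map, which requires centrality of $G'$. The relevant identity is $[xy,z]=x[y,z]x^{-1}[x,z]$ (in the paper's convention $[x,y]=xyx^{-1}y^{-1}$, or its analogue in the other convention). Once $G'$ is central, this reduces to $[xy,z]=[x,z][y,z]$, and iterating gives $[x^p,y]=[x,y]^p$. So Step~1 must come first; everything else is routine.
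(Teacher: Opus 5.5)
Your proposal is correct and follows essentially the same route as the paper's proof: centrality of $G'$ comes from the trivial action of the $p$-group $G$ on $\Aut(G')\cong C_{p-1}$, and exponent $p$ of $G/\Z(G)$ comes from the class-$2$ identity $[x^p,y]=[x,y]^p=1$. You also state explicitly that $G/\Z(G)$ is abelian because $G'\le \Z(G)$, and you verify the bilinearity identity; the paper leaves both points implicit.
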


\begin{proof}
Conjugation induces a homomorphism $G\to \Aut(G')$.
Since $G$ is a $p$-group and $\Aut(G')\cong \Aut(C_p)$ has order $p-1$ coprime to $p$,
this homomorphism is trivial.
Thus $G'$ is central.

In a class-$2$ group one has $[x^p,y]=[x,y]^p$.
Here $[x,y]\in G'$ has order $p$, hence $[x^p,y]=1$ for all $y$, so $x^p\in \Z(G)$.
Therefore $(x\Z(G))^p=\Z(G)$ for every $x$, i.e.\ $G/\Z(G)$ has exponent $p$.
\end{proof}

\begin{lemma}[The commutator form]\label{lem:commutator-symplectic}
Let $G$ be as in Lemma~\ref{lem:orderp-commutator-class2}.
Fix an identification $G'\cong \mathbb F_p$.
Then the commutator induces a well-defined alternating bilinear form
\[
\beta: V\times V\to \mathbb F_p,
\qquad
V:=G/\Z(G),
\qquad
\beta(x\Z(G),y\Z(G))=[x,y].
\]
Moreover $\beta$ is nondegenerate, hence $\dim_{\mathbb F_p}V$ is even:
\(
|G:\Z(G)|=p^{2n}
\)
for some $n\ge 1$.
\end{lemma}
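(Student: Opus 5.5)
The plan is to verify, in order, that $\beta$ is well defined, that it is bilinear, that it is alternating, and that it is nondegenerate, and then to read off the parity of $\dim V$ from the standard structure theory of alternating forms over $\mathbb F_p$.

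By Lemma~\ref{lem:orderp-commutator-class2}, $G'\le \Z(G)$, so $G$ has class at most $2$ and $V=G/\Z(G)$ is an elementary abelian $p$-group, i.e.\ an $\mathbb F_p$-vector space.

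\emph{Well-definedness.} If $z,w\in \Z(G)$, then $[xz,yw]=[x,y]$ by the commutator expansion identities, since central factors drop out. Hence the value depends only on the cosets $x\Z(G)$ and $y\Z(G)$.

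\emph{Bilinearity.} In a class-$2$ group one has $[xx',y]=[x,y][x',y]$ and $[x,yy']=[x,y][x,y']$, because the correction terms are conjugates of commutators by elements, and commutators are central. So $\beta$ is additive in each variable. Additivity already gives compatibility with scalar multiplication by elements of $\mathbb F_p$: scalars are integers mod $p$, and $[x^k,y]=[x,y]^k$.

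\emph{Alternating.} We have $[x,x]=1$, so $\beta(v,v)=0$ for every $v\in V$.

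\emph{Nondegeneracy.} Suppose $v=x\Z(G)$ lies in the radical, so that $[x,y]=1$ for all $y\in G$. Then $x\in \Z(G)$, hence $v=0$.

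\emph{Parity of the dimension.} A nondegenerate alternating form on a finite-dimensional vector space forces even dimension, by the standard symplectic basis argument. Induct on $\dim V$: pick $v\neq 0$, use nondegeneracy to find $w$ with $\beta(v,w)=1$, and note that $H=\langle v,w\rangle$ is a hyperbolic plane. Then $V=H\oplus H^{\perp}$, and $\beta$ restricted to $H^\perp$ is again nondegenerate and alternating. Alternatively, the Gram matrix $M$ is skew-symmetric with zero diagonal, and for odd dimension $\det M=\det(-M^{T})=(-1)^{\dim V}\det M$ forces $\det M=0$ when $p$ is odd. For $p=2$ one uses the basis argument instead, which is why the basis argument is preferable in general.

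This gives $|G:\Z(G)|=p^{2n}$. We have $n\ge 1$ because $G$ is non-abelian: $|G'|=p$ means $G'\neq 1$.

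There is no serious obstacle. The only points needing care are the characteristic-$2$ case in the parity argument, which the hyperbolic-plane splitting handles, and remembering that the identification $G'\cong\mathbb F_p$ makes $\beta$ $\mathbb F_p$-valued.
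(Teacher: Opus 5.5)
Your proof is correct and follows essentially the same route as the paper: bilinearity from centrality of commutators, alternation from $[x,x]=1$, nondegeneracy because an element of the radical commutes with all of $G$ and so lies in $\Z(G)$, and even dimension from the standard theory of symplectic spaces. You also spell out three things the paper leaves implicit: well-definedness on cosets, the hyperbolic-plane splitting (which works for $p=2$ as well), and the reason $n\ge 1$.
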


\begin{proof}
Since $G'$ is central, commutators are bilinear:
$[x_1x_2,y]=[x_1,y][x_2,y]$ and $[x,y_1y_2]=[x,y_1][x,y_2]$.
Thus $\beta$ is bilinear over $\mathbb F_p$ (Lemma~\ref{lem:orderp-commutator-class2} gives that $V$ is an $\mathbb F_p$-vector space).
It is alternating because $[x,x]=1$.

If $v=x\Z(G)\in V$ lies in the radical, then $[x,y]=1$ for all $y\in G$, hence $x\in \Z(G)$ and $v=0$.
Thus $\beta$ is nondegenerate, so $V$ is a symplectic space and $\dim V=2n$ for some $n$.
\end{proof}

\subsection{A recursion and explicit formulas}

Fix $n\ge 0$ and let $V_n=\mathbb F_p^{2n}$ equipped with any nondegenerate alternating form
$\langle\cdot,\cdot\rangle$ (unique up to change of basis).
For $r\ge 1$ define
\[
I_r(n):=\big|\{(v_1,\dots,v_r)\in V_n^r:\ \langle v_i,v_j\rangle=0\ \forall i<j\}\big|,
\qquad
P_r^{(p)}(n):=\frac{I_r(n)}{p^{2nr}}.
\]
Thus $P_r^{(p)}(n)$ is the probability that $r$ random vectors in $V_n$ are pairwise orthogonal.

\begin{theorem}[Symplectic recursion for \texorpdfstring{$|G'|=p$}{|G'|=p}]\label{thm:symplectic-recursion}
Let $G$ be a finite $p$-group with $|G'|=p$ and write $|G:\Z(G)|=p^{2n}$.
Then for every $r\ge 1$,
\[
P_r(G)=P_r^{(p)}(n),
\]
so $P_r(G)$ depends only on $p$ and $n$ (and not on $|\Z(G)|$).

Moreover, for every $n\ge 1$ and $r\ge 2$ one has the recursion
\[
I_r(n)=I_{r-1}(n)+(p^{2n}-1)p^{r-1}I_{r-1}(n-1),
\]
equivalently
\[
P_r^{(p)}(n)=p^{-2n}P_{r-1}^{(p)}(n)+p^{1-r}\bigl(1-p^{-2n}\bigr)P_{r-1}^{(p)}(n-1),
\]
with initial conditions $P_r^{(p)}(0)=1$ and $P_1^{(p)}(n)=1$.
\end{theorem}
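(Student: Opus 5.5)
The proof splits into two parts. First we reduce $P_r(G)$ to the symplectic count $I_r(n)$. Then we prove the recursion for $I_r(n)$ by a fiber count.

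\emph{Reduction to the symplectic count.} By Lemmas~\ref{lem:orderp-commutator-class2} and \ref{lem:commutator-symplectic}, $V=G/\Z(G)\cong\mathbb F_p^{2n}$ carries a nondegenerate alternating form $\beta$, and $[x,y]=1$ if and only if $\beta(\bar x,\bar y)=0$, where $\bar x$ denotes the image in $V$. Hence an $r$-tuple $(x_1,\dots,x_r)\in G^r$ commutes pairwise if and only if its image $(\bar x_1,\dots,\bar x_r)\in V^r$ is pairwise orthogonal. The projection $G^r\to V^r$ has all fibers of size $|\Z(G)|^r$, so
\[
|\Comm_r(G)|=|\Z(G)|^r I_r(n).
\]
Dividing by $|G|^r=|\Z(G)|^r p^{2nr}$ gives $P_r(G)=P_r^{(p)}(n)$. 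This also shows that $P_r(G)$ is independent of $|\Z(G)|$. The initial conditions are immediate. For $n=0$ the space is $V_0=0$, so $I_r(0)=1$. For $r=1$ there are no orthogonality conditions, so $I_1(n)=p^{2n}$.

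\emph{The recursion.} Split the count by the first vector $v_1$.
\begin{itemize}
\item If $v_1=0$, the remaining $(r-1)$-tuple is an arbitrary pairwise orthogonal tuple. This contributes $I_{r-1}(n)$.
\item If $v_1\neq0$ (there are $p^{2n}-1$ choices), each of $v_2,\dots,v_r$ must lie in $v_1^\perp$, a $(2n-1)$-dimensional space. Its radical is the line $L=\langle v_1\rangle$, since $v_1\in v_1^\perp$ and the form is alternating. The quotient $v_1^\perp/L$ is a nondegenerate symplectic space of dimension $2n-2$, that is, a copy of $V_{n-1}$. Orthogonality among $v_2,\dots,v_r$ depends only on their classes modulo $L$, because $L$ is in the radical of $\beta|_{v_1^\perp}$. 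The map $(v_1^\perp)^{r-1}\to V_{n-1}^{r-1}$ has fibers of size $p^{r-1}$. So the number of valid $(v_2,\dots,v_r)$ is $p^{r-1}I_{r-1}(n-1)$.
\end{itemize}
Summing the two cases gives $I_r(n)=I_{r-1}(n)+(p^{2n}-1)p^{r-1}I_{r-1}(n-1)$. Dividing by $p^{2nr}$ and using $p^{2nr}=p^{2n}\cdot p^{2n(r-1)}=p^{2n}\cdot p^{2}\,p^{2(n-1)}\cdots$ yields the normalized form: the second term becomes
\[
(1-p^{-2n})\,p^{r-1}\,p^{2(n-1)(r-1)}\,p^{-2n(r-1)}=p^{1-r}(1-p^{-2n}),
\]
multiplied by $P^{(p)}_{r-1}(n-1)$.

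The main point needing care is the second case. One must check that $v_1^\perp/\langle v_1\rangle$ is nondegenerate: its radical is $(v_1^\perp)^\perp\cap v_1^\perp=\langle v_1\rangle$, using nondegeneracy of $\beta$ on $V$. One must also check that the orthogonality conditions descend to this quotient. Everything else is bookkeeping.
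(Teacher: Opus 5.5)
Your proposal is correct and follows the paper's proof essentially step for step: you reduce to $V=G/\Z(G)$ using fibers of size $|\Z(G)|^r$, then split on whether $v_1=0$ and lift orthogonal $(r-1)$-tuples from $v_1^\perp/\langle v_1\rangle$, each with $p^{r-1}$ lifts. Your extra checks (that $\langle v_1\rangle$ is exactly the radical of $v_1^\perp$, so the quotient is a nondegenerate $(2n-2)$-dimensional symplectic space, and that orthogonality descends to it) fill in details the paper leaves implicit; the chain ``$p^{2nr}=p^{2n}\cdot p^{2}\,p^{2(n-1)}\cdots$'' is garbled, but the normalized coefficient $p^{1-r}(1-p^{-2n})$ you end up with is right.
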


\begin{proof}
By Lemma~\ref{lem:commutator-symplectic}, commuting in $G$ is detected on $V=G/\Z(G)$ by the symplectic form:
$(x_1,\dots,x_r)$ commutes pairwise if and only if $\beta(x_i\Z(G),x_j\Z(G))=0$ for all $i<j$.
Each $v\in V$ has exactly $|\Z(G)|$ lifts to $G$, so
\[
|\Comm_r(G)|=|\Z(G)|^r\,I_r(n).
\]
Since $|G|=|\Z(G)|\,p^{2n}$, dividing by $|G|^r$ gives $P_r(G)=I_r(n)/p^{2nr}=P_r^{(p)}(n)$.

For the recursion, fix $n\ge 1$ and count $I_r(n)$ by the first coordinate $v_1$.
If $v_1=0$, then $(v_2,\dots,v_r)$ is any orthogonal $(r-1)$-tuple in $V_n$, giving $I_{r-1}(n)$ choices.
If $v_1\neq 0$, then each $v_i$ must lie in the orthogonal hyperplane $v_1^\perp$, whose quotient
$v_1^\perp/\langle v_1\rangle$ is a symplectic space of dimension $2n-2$.
Every orthogonal $(r-1)$-tuple in $v_1^\perp/\langle v_1\rangle$ has exactly $p^{r-1}$ lifts to $v_1^\perp$,
and there are $p^{2n}-1$ choices for $v_1\neq 0$.
This gives the stated formula for $I_r(n)$.
The recursion for $P_r^{(p)}(n)$ is obtained by dividing by $p^{2nr}$.
\end{proof}

\begin{corollary}[Closed formulas for $P_2$ and $P_3$]\label{cor:orderp-P2P3}
Let $G$ be a finite $p$-group with $|G'|=p$ and $|G:\Z(G)|=p^{2n}$.
Then
\[
P_2(G)=\frac{1}{p}+\left(1-\frac{1}{p}\right)p^{-2n},
\qquad
P_3(G)=\frac{p^{2n}+p^3-1}{p^{2n+3}}.
\]
Equivalently,
\[
\kappa_2(G)=|G|^2P_3(G)=|G|^2\,\frac{p^{2n}+p^3-1}{p^{2n+3}}.
\]
\end{corollary}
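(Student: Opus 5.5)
The plan is to deduce both formulas from Theorem~\ref{thm:symplectic-recursion}, which already gives $P_r(G)=P_r^{(p)}(n)$, together with the recursion
\[
P_r^{(p)}(n)=p^{-2n}P_{r-1}^{(p)}(n)+p^{1-r}\bigl(1-p^{-2n}\bigr)P_{r-1}^{(p)}(n-1).
\]
So it suffices to compute $P_2^{(p)}(n)$ and $P_3^{(p)}(n)$ in closed form for all $n\ge 0$. I will do this by two successive applications of the recursion, starting from the initial conditions $P_1^{(p)}(n)=1$ and $P_r^{(p)}(0)=1$.

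\emph{Step 1 ($r=2$).} Put $r=2$ and use $P_1^{(p)}\equiv 1$. The recursion gives, for $n\ge1$,
\[
P_2^{(p)}(n)=p^{-2n}+p^{-1}(1-p^{-2n})=\frac1p+\Bigl(1-\frac1p\Bigr)p^{-2n}.
\]
This expression also equals $1$ at $n=0$, so it agrees with the initial condition $P_2^{(p)}(0)=1$. Hence it holds for every $n\ge 0$, which matters because Step~2 uses it at $n-1$.

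\emph{Step 2 ($r=3$).} Put $r=3$ and insert the Step~1 formula at both $n$ and $n-1$:
\[
P_3^{(p)}(n)=p^{-2n}\Bigl(p^{-1}+(p-1)p^{-2n-1}\Bigr)+(1-p^{-2n})\Bigl(p^{-3}+(p-1)p^{-2n-1}\Bigr).
\]
On expanding, the two terms $\pm(p-1)p^{-4n-1}$ cancel. The terms $p^{-2n-1}+(p-1)p^{-2n-1}$ combine to $p^{-2n}$, leaving
\[
P_3^{(p)}(n)=p^{-3}+p^{-2n}-p^{-2n-3}=\frac{p^{2n}+p^3-1}{p^{2n+3}}.
\]
This is the claimed formula. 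It is valid for $n\ge1$, which is the only case needed, since $G$ is non-abelian whenever $|G'|=p$; it also happens to give $1$ at $n=0$.

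\emph{Step 3.} The statement about $\kappa_2(G)$ is then immediate from Theorem~\ref{thm:burnside} with $r=2$, which gives $\kappa_2(G)=|G|^2P_3(G)$. Alternatively one may cite Theorem~\ref{thm:P3formula}.

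There is no genuine obstacle here. The only points needing care are the bookkeeping of the cancellation in Step~2 and checking that the $P_2$ formula holds at $n=0$, so that the recursion may legitimately be applied at $n=1$.
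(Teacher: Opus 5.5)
Your proposal is correct and follows the paper's proof: the paper also computes $P_2^{(p)}(n)$ and $P_3^{(p)}(n)$ from the recursion of Theorem~\ref{thm:symplectic-recursion}, using the initial conditions $P_1^{(p)}(n)=1$ and $P_r^{(p)}(0)=1$, and obtains $\kappa_2(G)$ from Theorem~\ref{thm:burnside} with $r=2$. Your algebra checks out, and your observation that the $P_2$ formula also holds at $n=0$ is exactly what is needed to apply the recursion at $n=1$.
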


\begin{proof}
Compute $P_2^{(p)}(n)$ and $P_3^{(p)}(n)$ from Theorem~\ref{thm:symplectic-recursion} using $P_1^{(p)}(n)=1$
and $P_r^{(p)}(0)=1$.
The formula for $\kappa_2(G)$ is Theorem~\ref{thm:burnside} for $r=2$.
\end{proof}

\begin{theorem}[Rank-one commutator: commuting probabilities determine isoclinism]\label{thm:rank1-isoclinism}
Let $p$ be a prime and let $G,H$ be finite $p$-groups with $|G'|=|H'|=p$.
Write $|G:\Z(G)|=p^{2n}$ and $|H:\Z(H)|=p^{2m}$.
Then the following are equivalent:
\begin{enumerate}
\item $G$ and $H$ are isoclinic;
\item $n=m$ (equivalently $|G:\Z(G)|=|H:\Z(H)|$);
\item $P_2(G)=P_2(H)$;
\item $P_3(G)=P_3(H)$.
\end{enumerate}
In this case $P_r(G)=P_r(H)$ for all $r\ge 2$.

Moreover, the parameter $n$ (and hence the isoclinism class) is recovered from $P_2(G)$ and $P_3(G)$ by
\[
p^{2n}=\frac{p-1}{pP_2(G)-1}=\frac{p^3-1}{p^3P_3(G)-1}.
\]
\end{theorem}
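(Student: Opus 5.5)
Our plan is to go around the cycle (1)$\Rightarrow$(3),(4), then (3)$\Rightarrow$(2) and (4)$\Rightarrow$(2), and close with (2)$\Rightarrow$(1).

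The implications (1)$\Rightarrow$(3) and (1)$\Rightarrow$(4) are immediate from Proposition~\ref{prop:isoclinism-invariant}. The same proposition gives the final clause $P_r(G)=P_r(H)$ for all $r$ once (1) holds. Alternatively, (2) already yields the final clause via Theorem~\ref{thm:symplectic-recursion}, since $P_r(G)=P^{(p)}_r(n)$ depends only on $p$ and $n$.

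For (3)$\Rightarrow$(2) and (4)$\Rightarrow$(2), together with the recovery formulas, we use Corollary~\ref{cor:orderp-P2P3}.
\begin{itemize}
\item From $P_2(G)=\frac1p+(1-\frac1p)p^{-2n}$ we get $pP_2(G)-1=(p-1)p^{-2n}$, so $p^{2n}=(p-1)/(pP_2(G)-1)$.
\item From $P_3(G)=p^{-3}+(1-p^{-3})p^{-2n}$ we get $p^3P_3(G)-1=(p^3-1)p^{-2n}$, so $p^{2n}=(p^3-1)/(p^3P_3(G)-1)$.
\end{itemize}
Both denominators are nonzero because $n\ge1$ (Lemma~\ref{lem:commutator-symplectic}). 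Both maps $n\mapsto P_2$ and $n\mapsto P_3$ are strictly decreasing, hence injective. So equality of $P_2$, or of $P_3$, forces $n=m$.

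The main step is (2)$\Rightarrow$(1): building an isoclinism from $n=m$.
\begin{itemize}
\item By Lemma~\ref{lem:orderp-commutator-class2} and Lemma~\ref{lem:commutator-symplectic}, $V_G=G/\Z(G)$ and $V_H=H/\Z(H)$ are $\mathbb F_p$-spaces of dimension $2n$. Each carries a nondegenerate alternating form $\beta_G$, $\beta_H$, valued in $G'\cong\mathbb F_p\cong H'$.
\item Fix any isomorphism $\psi:G'\to H'$. Then $\psi\circ\beta_G$ and $\beta_H$ are both nondegenerate alternating forms on $2n$-dimensional spaces.
\item By uniqueness of symplectic spaces up to isometry (existence of symplectic bases), there is a linear isomorphism $\phi:V_G\to V_H$ with $\beta_H(\phi u,\phi v)=\psi(\beta_G(u,v))$.
\item This is exactly the intertwining condition $\psi\circ\kappa_G=\kappa_H\circ(\phi\times\phi)$ of Definition~\ref{def:isoclinism}. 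Lifts do not matter because commutators only depend on cosets mod the center, and $G'$ is central.
\end{itemize}

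The one subtlety is that $\phi$ must be a group isomorphism $G/\Z(G)\to H/\Z(H)$. Linear maps between elementary abelian groups are group isomorphisms, so this holds automatically. We therefore expect no real obstacle, beyond stating carefully that the nondegenerate alternating form is unique up to isometry.
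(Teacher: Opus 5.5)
Your proposal is correct and follows essentially the paper's proof. The closed formulas of Corollary~\ref{cor:orderp-P2P3} recover $p^{2n}$ and give (3)$\Leftrightarrow$(2)$\Leftrightarrow$(4), and uniqueness of nondegenerate alternating forms up to isometry gives (2)$\Rightarrow$(1). The only difference is how you bring (1) into the cycle: you use (1)$\Rightarrow$(3),(4) via Proposition~\ref{prop:isoclinism-invariant}, whereas the paper uses the more elementary (1)$\Rightarrow$(2), read off directly from $G/\Z(G)\cong H/\Z(H)$.
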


\begin{proof}
$(1)\Rightarrow(2)$ is immediate since isoclinism gives an isomorphism $G/\Z(G)\cong H/\Z(H)$.

$(2)\Rightarrow(1)$: by Lemma~\ref{lem:commutator-symplectic}, fixing identifications $G'\cong\mathbb F_p$ and
$H'\cong\mathbb F_p$, the commutators define nondegenerate alternating forms
$\beta_G$ on $V_G:=G/\Z(G)$ and $\beta_H$ on $V_H:=H/\Z(H)$.
If $n=m$ then $V_G$ and $V_H$ have the same $\mathbb F_p$-dimension $2n$, and any two nondegenerate alternating forms
on a $2n$-dimensional $\mathbb F_p$-vector space are isometric.
Thus there exists an isomorphism $\phi:V_G\to V_H$ and an isomorphism $\psi:G'\to H'$ such that
$\psi\circ \beta_G=\beta_H\circ(\phi\times\phi)$, which is exactly the isoclinism condition
(Definition~\ref{def:isoclinism}).

$(2)\Leftrightarrow(3)$ and the first displayed formula follow from Corollary~\ref{cor:orderp-P2P3}.
Similarly, $(2)\Leftrightarrow(4)$ and the second formula follow by solving the $P_3$-identity in
Corollary~\ref{cor:orderp-P2P3} for $p^{2n}$.
Finally, if $n=m$ then Theorem~\ref{thm:symplectic-recursion} gives $P_r(G)=P_r^{(p)}(n)=P_r^{(p)}(m)=P_r(H)$ for all $r\ge 2$.
\end{proof}

\begin{corollary}[A closed formula for $P_4$]\label{cor:orderp-P4}
Under the hypotheses of Corollary~\ref{cor:orderp-P2P3},
\[
P_4(G)
=\frac{1}{p^6}+\frac{p^5+p^3-p^2-1}{p^6}\,p^{-2n}
+\frac{p^4-p^3-p+1}{p^4}\,p^{-4n}.
\]
In particular, if $n=1$ then $P_4(G)=(p^4+p^3-1)/p^7$, i.e.\ the extremal value from Corollary~\ref{cor:P4bound}.
\end{corollary}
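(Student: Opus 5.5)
The plan is to apply the symplectic recursion of Theorem~\ref{thm:symplectic-recursion} once more, now with $r=4$, and feed in the closed form for $P_3$ from Corollary~\ref{cor:orderp-P2P3}. Since $|G'|=p$ forces $G$ to be non-abelian with $|G:\Z(G)|=p^{2n}$ and $n\ge 1$, Theorem~\ref{thm:symplectic-recursion} gives $P_4(G)=P_4^{(p)}(n)$ together with
\[
P_4^{(p)}(n)=p^{-2n}P_3^{(p)}(n)+p^{-3}\bigl(1-p^{-2n}\bigr)P_3^{(p)}(n-1).
\]
So the proof reduces to substituting an explicit expression for $P_3^{(p)}$ at $n$ and at $n-1$ and collecting powers of $p^{-2n}$.

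First I would rewrite Corollary~\ref{cor:orderp-P2P3} in the convenient form $P_3^{(p)}(m)=p^{-3}+(1-p^{-3})p^{-2m}$. I need this for $m=n$ and for $m=n-1$. The case $m=n-1=0$ requires a separate check, since the corollary is stated for groups; but the formula gives $P_3^{(p)}(0)=1$, which agrees with the initial condition $P_r^{(p)}(0)=1$. So the substitution is legitimate for all $n\ge1$. Writing $P_3^{(p)}(n-1)=p^{-3}+(p^2-p^{-1})p^{-2n}$ makes the dependence on $x:=p^{-2n}$ explicit. The right-hand side is then a quadratic polynomial in $x$, and I would read off its three coefficients:
\begin{itemize}
\item the constant term is $p^{-6}$;
\item the linear coefficient is $p^{-3}+p^{-1}-p^{-4}-p^{-6}=(p^5+p^3-p^2-1)/p^6$;
\item the quadratic coefficient is $(1-p^{-3})-p^{-3}(p^2-p^{-1})=(p^4-p^3-p+1)/p^4$.
\end{itemize}
These match the stated formula.

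For the final clause, I would set $n=1$, put everything over $p^8$, and obtain numerator $p^2+(p^5+p^3-p^2-1)+(p^4-p^3-p+1)=p^5+p^4-p$. This gives $(p^4+p^3-1)/p^7$, which is the extremal value in Corollary~\ref{cor:P4bound}. As a consistency check, $n=1$ means $G/\Z(G)\cong C_p\times C_p$, the equality case of Theorem~\ref{thm:Prbound}.

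There is no real obstacle here. The only points needing care are the justification of the $n-1=0$ boundary case and bookkeeping of the $p^{2}$ factor that arises from shifting $p^{-2(n-1)}=p^2\,p^{-2n}$.
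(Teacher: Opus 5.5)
Your proposal is correct and follows the paper's own proof. The paper likewise applies the symplectic recursion at $r=4$, substitutes the closed form of $P_3$ at $n$ and $n-1$, and collects powers of $p^{-2n}$. Your coefficient computations and the $n=1$ specialization check out, and your explicit verification of the $n-1=0$ boundary case is a point of care the paper leaves implicit.
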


\begin{proof}
Apply Theorem~\ref{thm:symplectic-recursion} with $r=4$ and substitute the explicit expression for $P_3(G)$
from Corollary~\ref{cor:orderp-P2P3}.
A short simplification yields the stated polynomial in $p^{-2n}$.
The specialization $n=1$ follows by direct simplification (or by Corollary~\ref{cor:P4bound}).
\end{proof}

\begin{remark}[Large symplectic rank limit]\label{rem:symplectic-limit}
For fixed $r$ and $p$, letting $n\to\infty$ in the recursion of Theorem~\ref{thm:symplectic-recursion} shows that
\[
\lim_{n\to\infty} P_r^{(p)}(n)=p^{-\binom{r}{2}}.
\]
\end{remark}

\subsection{Beyond \texorpdfstring{$|G'|=p$}{|G'|=p}}\label{subsec:commutator-tensor}

The symplectic reduction above uses the strongest possible uniformity: a $1$-dimensional commutator space and a
nondegenerate alternating form.
More generally, many $p$-groups (including all groups of nilpotency class $2$ and exponent $p$) admit a linear-algebra
model in which the commutator is an alternating bilinear map with values in an $\mathbb F_p$-vector space.
This viewpoint turns higher commuting probabilities into \emph{isotropic tuple counts} for that bilinear map.

\begin{definition}[Class-$2$ exponent-$p$ groups and the commutator map]\label{def:class2-exp-p}
Let $p$ be a prime and let $G$ be a finite $p$-group of nilpotency class $2$ and exponent $p$.
(If $p=2$ then exponent $p$ forces $G$ abelian, so the non-abelian case is relevant only for odd $p$.)
Set
\[
V:=G/\Z(G),\qquad W:=G'.
\]
Then $V$ and $W$ are naturally $\mathbb F_p$-vector spaces, and the commutator induces an alternating $\mathbb F_p$-bilinear map
\[
\beta:V\times V\to W,\qquad \beta(x\Z(G),y\Z(G))=[x,y],
\]
equivalently a linear map of $\mathbb F_p$-vector spaces
\[
\widetilde\beta:\Lambda^2 V\to W.
\]
We call $\widetilde\beta$ (or $\beta$) the \emph{commutator tensor} of $G$.
\end{definition}

\begin{proposition}[Reduction to isotropic tuples]\label{prop:class2-reduction}
Let $G$ be a finite $p$-group of class $2$ and exponent $p$, with commutator tensor $\widetilde\beta:\Lambda^2V\to W$
as in Definition~\ref{def:class2-exp-p}.
For $r\ge 1$ set
\[
N_r(V,\beta):=\big|\{(v_1,\dots,v_r)\in V^r:\ \beta(v_i,v_j)=0\ \forall i<j\}\big|.
\]
Then
\[
|\Comm_r(G)|=|\Z(G)|^r\,N_r(V,\beta),
\qquad\text{and hence}\qquad
P_r(G)=\frac{N_r(V,\beta)}{|V|^r}.
\]
In particular, $P_r(G)$ depends only on the alternating bilinear map $\beta$ on $V$ (and not on $|\Z(G)|$).
\end{proposition}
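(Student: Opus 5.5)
The plan is to mirror the first half of the proof of Theorem~\ref{thm:symplectic-recursion}, with the single-valued form replaced by the commutator tensor. First I will check that $\beta$ is well defined and bilinear. Since $G$ has class $2$, we have $G'\le \Z(G)$. This gives the identities $[x_1x_2,y]=[x_1,y][x_2,y]$ and $[x,y_1y_2]=[x,y_1][x,y_2]$. Hence $[xz,yz']=[x,y]$ for all $z,z'\in\Z(G)$, so $[x,y]$ depends only on the cosets $x\Z(G),y\Z(G)$.

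Next I will check that $V$ and $W$ are $\mathbb F_p$-vector spaces. Exponent $p$ makes $V=G/\Z(G)$ an abelian group of exponent $p$; it is abelian because $G'\le \Z(G)$. Likewise $W=G'$ is abelian of exponent $p$, being central. Biadditivity then gives $\mathbb F_p$-bilinearity automatically, since scalars are integers mod $p$. Alternation follows from $[x,x]=1$.

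The core step is the commutation criterion. For $x_1,\dots,x_r\in G$, the tuple is pairwise commuting if and only if $[x_i,x_j]=1$ for all $i<j$. By definition this says $\beta(x_i\Z(G),x_j\Z(G))=0$ for all $i<j$, so membership in $\Comm_r(G)$ depends only on the image $(v_1,\dots,v_r)\in V^r$. The projection $G^r\to V^r$ has all fibres of size $|\Z(G)|^r$. Therefore $\Comm_r(G)$ is exactly the preimage of the isotropic-tuple set, and we get
\[
|\Comm_r(G)|=|\Z(G)|^r N_r(V,\beta).
\]
Dividing by $|G|^r=|\Z(G)|^r|V|^r$ gives $P_r(G)=N_r(V,\beta)/|V|^r$. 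This expression visibly involves only $(V,\beta)$.

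There is no real obstacle here. The only point needing care is the well-definedness and $\mathbb F_p$-linearity in the first step, and that uses only class $2$ and exponent $p$. In fact the counting argument uses only class $2$ and the fibre count, so exponent $p$ serves merely to make the vector-space language legitimate.
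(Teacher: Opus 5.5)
Your proposal is correct and follows essentially the same route as the paper: membership in $\Comm_r(G)$ depends only on the image tuple in $V^r$ through $\beta$, each tuple in $V^r$ has exactly $|\Z(G)|^r$ lifts, and dividing by $|G|^r=(|\Z(G)|\,|V|)^r$ gives $P_r(G)$. Your extra checks of well-definedness and $\mathbb F_p$-bilinearity are folded by the paper into Definition~\ref{def:class2-exp-p}, and your remark that exponent $p$ only serves to justify the vector-space language is accurate.
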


\begin{proof}
Since $G$ has class $2$, commutators are central, and if $z\in \Z(G)$ then $[xz,y]=[x,y]$ and $[x,yz]=[x,y]$.
Thus whether an ordered tuple $(x_1,\dots,x_r)\in G^r$ is pairwise commuting depends only on the cosets
$(x_1\Z(G),\dots,x_r\Z(G))\in V^r$ and is equivalent to the isotropy conditions $\beta(v_i,v_j)=0$.

Each $r$-tuple $(v_1,\dots,v_r)\in V^r$ has exactly $|\Z(G)|^r$ lifts to $G^r$.
Therefore $|\Comm_r(G)|=|\Z(G)|^r\,N_r(V,\beta)$.
Dividing by $|G|^r=(|\Z(G)|\,|V|)^r$ gives $P_r(G)=N_r(V,\beta)/|V|^r$.
\end{proof}

\begin{proposition}[A rank-distribution formula for $P_2$]\label{prop:P2-rankdistribution}
Let $G$ be a finite $p$-group of class $2$ and exponent $p$ with commutator map $\beta:V\times V\to W$ as in
Definition~\ref{def:class2-exp-p}.
For $v\in V$ let $\beta_v:V\to W$ be the linear map $\beta_v(w):=\beta(v,w)$.
Then
\[
P_2(G)=\frac{1}{|V|}\sum_{v\in V} p^{-\mathrm{rk}(\beta_v)}
=\mathbf E_{v\in V}\big[p^{-\mathrm{rk}(\beta_v)}\big].
\]
In particular, if $\mathrm{rk}(\beta_v)=\dim W$ for all $v\neq 0$, then
\[
P_2(G)=p^{-\dim V}+\left(1-p^{-\dim V}\right)p^{-\dim W}.
\]
\end{proposition}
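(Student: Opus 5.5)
The plan is to start from Proposition~\ref{prop:class2-reduction} with $r=2$, which gives
\[
P_2(G)=\frac{N_2(V,\beta)}{|V|^2},\qquad N_2(V,\beta)=\big|\{(v,w)\in V^2:\ \beta(v,w)=0\}\big|.
\]
I will count $N_2$ by fibering over the first coordinate. For fixed $v\in V$, the set of admissible $w$ is exactly $\ker(\beta_v)$, where $\beta_v:V\to W$ is $\mathbb F_p$-linear. This uses bilinearity of $\beta$, as established in Lemma~\ref{lem:commutator-symplectic} and Definition~\ref{def:class2-exp-p}.

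By rank--nullity,
\[
|\ker\beta_v|=p^{\dim V-\mathrm{rk}(\beta_v)}=|V|\,p^{-\mathrm{rk}(\beta_v)}.
\]
Summing over $v$ gives $N_2=|V|\sum_{v}p^{-\mathrm{rk}(\beta_v)}$. Dividing by $|V|^2$ then yields the expectation formula.

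For the special case, I will split off $v=0$. There $\beta_0=0$, so the rank is $0$ and the contribution is $1/|V|=p^{-\dim V}$. Each of the remaining $|V|-1$ vectors contributes $p^{-\dim W}/|V|$ by the hypothesis $\mathrm{rk}(\beta_v)=\dim W$. Adding these gives
\[
P_2(G)=p^{-\dim V}+\left(1-p^{-\dim V}\right)p^{-\dim W}.
\]

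There is no real obstacle here. The only point needing care is that $\beta_v$ is genuinely $\mathbb F_p$-linear on the vector space $V=G/\Z(G)$. This follows from the class-$2$ bilinearity identities together with the fact that $V$ and $W$ have exponent $p$.
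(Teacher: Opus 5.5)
Your proposal is correct and matches the paper's proof: you count $N_2(V,\beta)$ by fibering over the first coordinate, use $|\ker\beta_v|=p^{\dim V-\mathrm{rk}(\beta_v)}$, divide by $|V|^2$ via Proposition~\ref{prop:class2-reduction}, and split off $v=0$ for the uniform-rank case. One small citation point: Lemma~\ref{lem:commutator-symplectic} is stated only for $|G'|=p$, but its bilinearity argument (commutators are central in class $2$) carries over verbatim to the general class-$2$ exponent-$p$ setting.
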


\begin{proof}
For fixed $v\in V$, the number of $w\in V$ with $\beta(v,w)=0$ is $|\ker \beta_v|=p^{\dim V-\mathrm{rk}(\beta_v)}$.
Thus
\[
N_2(V,\beta)=\sum_{v\in V} |\ker \beta_v|
=\sum_{v\in V} p^{\dim V-\mathrm{rk}(\beta_v)}.
\]
Divide by $|V|^2=p^{2\dim V}$ and use Proposition~\ref{prop:class2-reduction}.
The final identity is the special case where all nonzero $v$ have the same rank.
\end{proof}

\begin{proposition}[Isotropic-span recursion]\label{prop:isotropic-span-recursion}
Let $\beta:V\times V\to W$ be any alternating bilinear map between finite $\mathbb F_p$-vector spaces.
For a subspace $U\le V$ define its orthogonal complement
\[
U^\perp:=\{v\in V:\ \beta(u,v)=0\ \forall u\in U\}.
\]
Then for every $r\ge 1$,
\[
N_{r+1}(V,\beta)=\sum_{(v_1,\dots,v_r)\ \text{\rm isotropic}} \big|\langle v_1,\dots,v_r\rangle^\perp\big|,
\]
where $\langle v_1,\dots,v_r\rangle$ denotes the linear span.
Equivalently,
\[
P_{r+1}(G)=P_r(G)\cdot \mathbf E\!\left[\frac{|\langle v_1,\dots,v_r\rangle^\perp|}{|V|}\ \Big|\ (v_1,\dots,v_r)\ \text{\rm isotropic}\right]
\]
for every class-$2$ exponent-$p$ group $G$ with commutator map $\beta$.
\end{proposition}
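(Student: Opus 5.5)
The identity is a fiber count: sort isotropic $(r{+}1)$-tuples by their first $r$ coordinates. Concretely, for $(v_1,\dots,v_{r+1})\in V^{r+1}$, the condition $\beta(v_i,v_j)=0$ for all $i<j\le r+1$ splits into two parts. The first is that $(v_1,\dots,v_r)$ is isotropic. The second is that $\beta(v_i,v_{r+1})=0$ for $1\le i\le r$.

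The first step handles the second condition. By bilinearity of $\beta$ in its first argument, the conditions $\beta(v_i,v_{r+1})=0$ for all $i\le r$ are equivalent to $\beta(u,v_{r+1})=0$ for every $u$ in the linear span $U:=\langle v_1,\dots,v_r\rangle$. Any such $u$ is an $\mathbb F_p$-linear combination of the $v_i$, and $\beta(\cdot,v_{r+1})$ is linear. This says exactly that $v_{r+1}\in U^\perp$. Since $\beta$ is alternating, the side on which we impose orthogonality does not matter, as $\beta(v,u)=-\beta(u,v)$. Also, $U^\perp$ is the kernel of the linear map $V\to\Hom(U,W)$, so it is a subspace and its cardinality makes sense.

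The second step sums the fiber sizes. For each fixed isotropic $r$-tuple, the number of admissible $v_{r+1}$ is $|\langle v_1,\dots,v_r\rangle^\perp|$. Summing over all isotropic $r$-tuples gives $N_{r+1}(V,\beta)$, which is the first displayed formula.

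For the probabilistic reformulation, divide both sides by $|V|^{r+1}$. By Proposition~\ref{prop:class2-reduction}, $N_{r+1}/|V|^{r+1}=P_{r+1}(G)$ and $N_r/|V|^r=P_r(G)$. The sum over isotropic tuples, divided by $N_r$, is the conditional expectation with respect to the uniform distribution on isotropic $r$-tuples. This yields
\[
P_{r+1}(G)=\frac{N_r}{|V|^r}\cdot\frac{1}{N_r}\sum_{\text{isotropic}}\frac{|U^\perp|}{|V|},
\]
which is the stated identity. The conditioning is legitimate because $N_r\ge 1$, since the zero tuple is isotropic.

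No step is a real obstacle. The only point requiring care is the passage from orthogonality to each individual $v_i$ to orthogonality to their span. That passage is just linearity, and it is valid for an arbitrary alternating bilinear map $\beta$ with values in $W$, with no nondegeneracy assumed.
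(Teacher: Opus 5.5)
Your proposal is correct and follows the paper's proof essentially verbatim: fix an isotropic $r$-tuple, note that the admissible $v_{r+1}$ are exactly the elements of $\langle v_1,\dots,v_r\rangle^\perp$, sum over isotropic tuples, and then divide by $|V|^{r+1}$ using Proposition~\ref{prop:class2-reduction}. The extra details you supply are correct and make explicit what the paper leaves implicit: linearity to pass from the individual $v_i$ to their span, and $N_r\ge 1$ so that the conditioning is well defined.
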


\begin{proof}
Fix an isotropic $r$-tuple $(v_1,\dots,v_r)$.
An element $v_{r+1}\in V$ extends it to an isotropic $(r+1)$-tuple if and only if
$\beta(v_i,v_{r+1})=0$ for all $i$, i.e.\ if and only if $v_{r+1}\in \langle v_1,\dots,v_r\rangle^\perp$.
Summing over isotropic $r$-tuples gives the first identity.
The second identity is obtained by dividing by $|V|^{r+1}$ and rewriting the sum as a conditional expectation.
\end{proof}

\subsection{Heisenberg-type groups over finite fields}\label{subsec:heisenberg}

A particularly uniform higher-rank family arises when the commutator tensor is symplectic over a finite field extension.

\begin{definition}[$\mathbb F_{p^m}$-Heisenberg type]\label{def:heisenberg-type}
Let $p$ be an odd prime and let $q:=p^m$.
We say that a finite $p$-group $G$ is of \emph{$\mathbb F_q$-Heisenberg type of rank $n$} if:
\begin{enumerate}
\item $G$ has nilpotency class $2$ and exponent $p$;
\item $G'=\Z(G)$ is elementary abelian of order $q$ (so $G'\cong \mathbb F_q$ as additive groups);
\item $V:=G/\Z(G)$ carries the structure of an $\mathbb F_q$-vector space of dimension $2n$ such that,
under an identification $G'\cong \mathbb F_q$, the commutator map
\(
\beta:V\times V\to \mathbb F_q
\)
is a nondegenerate alternating $\mathbb F_q$-bilinear form.
\end{enumerate}
Equivalently, at the level of commutator geometry, $G$ is represented by a nondegenerate alternating $\mathbb F_q$-bilinear form on $\mathbb F_q^{2n}$; this determines the associated isoclinism data, not a preferred group isomorphism type.
\end{definition}

For such a group, commuting in $G$ is detected on $V$ by the $\mathbb F_q$-symplectic form.
Define, for $r\ge 1$,
\[
I_r^{(q)}(n):=\big|\{(v_1,\dots,v_r)\in (\mathbb F_q^{2n})^r:\ \langle v_i,v_j\rangle=0\ \forall i<j\}\big|,
\qquad
P_r^{(q)}(n):=\frac{I_r^{(q)}(n)}{q^{2nr}},
\]
where $\langle\cdot,\cdot\rangle$ is any nondegenerate alternating form on $\mathbb F_q^{2n}$.

\begin{theorem}[$q$-symplectic recursion]\label{thm:qsymplectic-recursion}
Let $G$ be of $\mathbb F_q$-Heisenberg type of rank $n$ (Definition~\ref{def:heisenberg-type}).
Then for every $r\ge 1$,
\[
P_r(G)=P_r^{(q)}(n),
\]
so $P_r(G)$ depends only on $q=p^m$ and $n$.

Moreover, for every $n\ge 1$ and $r\ge 2$ one has the recursion
\[
I_r^{(q)}(n)=I_{r-1}^{(q)}(n)+(q^{2n}-1)\,q^{r-1}\,I_{r-1}^{(q)}(n-1),
\]
equivalently
\[
P_r^{(q)}(n)=q^{-2n}P_{r-1}^{(q)}(n)+q^{1-r}\bigl(1-q^{-2n}\bigr)P_{r-1}^{(q)}(n-1),
\]
with initial conditions $P_r^{(q)}(0)=1$ and $P_1^{(q)}(n)=1$.
\end{theorem}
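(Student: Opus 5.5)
The proof will mirror that of Theorem~\ref{thm:symplectic-recursion}, replacing $\mathbb F_p$ by $\mathbb F_q$ throughout. There are two steps: reduce the group count to a count of isotropic tuples, then prove the recursion for those counts.

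\emph{Reduction.} Since $G$ has class $2$ and exponent $p$, Proposition~\ref{prop:class2-reduction} gives
\[
P_r(G)=\frac{N_r(V,\beta)}{|V|^r}.
\]
By Definition~\ref{def:heisenberg-type}, $V$ is a $2n$-dimensional $\mathbb F_q$-space and $\beta$ is a nondegenerate alternating $\mathbb F_q$-bilinear form. After fixing the identification $G'\cong\mathbb F_q$, the condition that $[x_i,x_j]=1$ in $G$ is exactly $\beta(v_i,v_j)=0$ in $\mathbb F_q$. Hence $N_r(V,\beta)=I_r^{(q)}(n)$ and $|V|^r=q^{2nr}$.

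This count does not depend on the choice of form, because nondegenerate alternating forms over $\mathbb F_q$ are classified by dimension: any two such forms on $\mathbb F_q^{2n}$ are related by a change of basis, so all have the same isotropic tuple counts. Consequently $P_r(G)=P_r^{(q)}(n)$.

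\emph{Recursion.} We condition on $v_1$.

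\begin{itemize}
\item If $v_1=0$, the remaining $(v_2,\dots,v_r)$ may be any isotropic $(r-1)$-tuple, giving $I_{r-1}^{(q)}(n)$ choices.
\item If $v_1\neq 0$, there are $q^{2n}-1$ choices. Each $v_i$ ($i\ge2$) must lie in $v_1^\perp$, which is an $\mathbb F_q$-hyperplane (nondegeneracy makes $\beta(v_1,\cdot)$ a nonzero $\mathbb F_q$-linear functional). Because $\beta$ is alternating, $\langle v_1\rangle\subseteq v_1^\perp$ lies in the radical of $\beta|_{v_1^\perp}$. The induced form on $v_1^\perp/\langle v_1\rangle$ is therefore well defined and nondegenerate on a space of dimension $2n-2$. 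Its radical is $v_1^\perp\cap(v_1^{\perp})^{\perp}=v_1^\perp\cap\langle v_1\rangle=\langle v_1\rangle$, using $(U^\perp)^\perp=U$ for a nondegenerate form. Isotropy of $(v_2,\dots,v_r)$ depends only on their images modulo $\langle v_1\rangle$, and each coset has $q$ elements. Each isotropic $(r-1)$-tuple in the quotient thus lifts to exactly $q^{r-1}$ tuples, giving $(q^{2n}-1)q^{r-1}I_{r-1}^{(q)}(n-1)$.
\end{itemize}

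Adding the two cases gives the recursion for $I_r^{(q)}(n)$. Dividing by $q^{2nr}$ gives the normalized recursion for $P_r^{(q)}(n)$, using
\[
\frac{q^{r-1}\,q^{2(n-1)(r-1)}}{q^{2nr}}\,(q^{2n}-1)=q^{1-r}\bigl(1-q^{-2n}\bigr).
\]
The initial conditions are immediate: $V_0=0$, and any $1$-tuple is isotropic.

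The only point that needs care is that linear-algebraic statements must be made over $\mathbb F_q$ rather than $\mathbb F_p$. In particular, $v_1^\perp$ has index $q$, not $p$, because $\beta$ is $\mathbb F_q$-bilinear with values in the one-dimensional $\mathbb F_q$-space $G'$. Hypothesis~(3) of Definition~\ref{def:heisenberg-type} supplies exactly this.
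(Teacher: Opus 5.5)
Your proof is correct and follows the paper's argument essentially verbatim. You reduce via Proposition~\ref{prop:class2-reduction} to counting isotropic tuples in $\mathbb F_q^{2n}$, then condition on whether $v_1=0$, passing to the $(2n-2)$-dimensional symplectic quotient $v_1^\perp/\langle v_1\rangle$ with $q^{r-1}$ lifts per tuple. Your extra details are accurate and fill in steps the paper leaves implicit: the radical computation showing the induced form is nondegenerate, the independence of the count from the choice of form, and the exponent check in the normalization.
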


\begin{proof}
By Proposition~\ref{prop:class2-reduction}, $P_r(G)$ equals the probability that $r$ random vectors in $V\cong\mathbb F_q^{2n}$
are pairwise orthogonal for the symplectic form, which is exactly $P_r^{(q)}(n)$.

The recursion is the same counting argument as in Theorem~\ref{thm:symplectic-recursion}, now over $\mathbb F_q$.
If $v_1=0$ there are $I_{r-1}^{(q)}(n)$ choices for $(v_2,\dots,v_r)$.
If $v_1\neq 0$, there are $q^{2n}-1$ choices for $v_1$, and the remaining vectors must lie in $v_1^\perp$.
The quotient $v_1^\perp/\langle v_1\rangle$ is a symplectic $\mathbb F_q$-space of dimension $2n-2$,
and each tuple in the quotient has $q^{r-1}$ lifts to $v_1^\perp$ (adding independent multiples of $v_1$).
This gives the stated formula for $I_r^{(q)}(n)$.
Dividing by $q^{2nr}$ yields the recursion for $P_r^{(q)}(n)$.
\end{proof}

\begin{theorem}[Closed hierarchy and rigidity in the $\mathbb F_q$-Heisenberg family]\label{thm:heisenberg-full-hierarchy}\label{thm:rank2-heisenberg-isoclinism}
Let $p$ be an odd prime, let $q:=p^m$, and let $G$ be of $\mathbb F_q$-Heisenberg type of rank $n$
in the sense of Definition~\ref{def:heisenberg-type}. Write
\[
V:=G/\Z(G)\cong \mathbb F_q^{2n}.
\]
For $0\le k\le n$, let $L_{n,k}(q)$ denote the number of $k$-dimensional totally isotropic
$\mathbb F_q$-subspaces of $V$. Then
\[
L_{n,k}(q)=\prod_{i=0}^{k-1}\frac{q^{2n-2i}-1}{q^{k-i}-1}.
\]

For every $r\ge 1$ one has
\[
I_r^{(q)}(n)
=
\sum_{k=0}^{\min(n,r)}
L_{n,k}(q)\prod_{i=0}^{k-1}(q^r-q^i),
\]
and hence
\[
|\Comm_r(G)|
=
|\Z(G)|^r
\sum_{k=0}^{\min(n,r)}
L_{n,k}(q)\prod_{i=0}^{k-1}(q^r-q^i),
\]
equivalently
\[
P_r(G)
=
q^{-2nr}
\sum_{k=0}^{\min(n,r)}
L_{n,k}(q)\prod_{i=0}^{k-1}(q^r-q^i).
\]
In particular, the full hierarchy $\{P_r(G)\}_{r\ge2}$ depends only on $(q,n)$.

Now let $q':=p^{m'}$, and let $H$ be another finite $p$-group of $\mathbb F_{q'}$-Heisenberg type of rank $n'$.
Then the following are equivalent:
\begin{enumerate}
\item $G$ and $H$ are isoclinic;
\item $(q,n)=(q',n')$;
\item $P_2(G)=P_2(H)$ and $P_3(G)=P_3(H)$;
\item $P_r(G)=P_r(H)$ for all $r\ge2$.
\end{enumerate}
Moreover, in (3) the parameter $q$ is the unique positive root of
\[
\bigl(P_2(G)-P_3(G)\bigr)X^2+\bigl(P_2(G)-1\bigr)X+\bigl(P_2(G)-1\bigr)=0,
\]
and then
\[
q^{-2n}=\frac{qP_2(G)-1}{q-1}.
\]

Finally, with the manuscript convention
\[
\mathcal P_G(z):=\sum_{r\ge2}P_r(G)z^{r-2},
\]
the series $\mathcal P_G(z)$ is rational and its pole set is contained in
\[
\{q^n,q^{n+1},\dots,q^{2n}\}.
\]
In particular, the pole at $z=q^n$ is present.
\end{theorem}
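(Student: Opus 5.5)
Organize the proof into four blocks: the subspace count, the isotropic tuple count, isoclinism rigidity, and the generating function.

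\emph{Counting $L_{n,k}(q)$ and $I_r^{(q)}(n)$.} First count ordered linearly independent isotropic $k$-tuples $(u_1,\dots,u_k)$ in $V\cong\mathbb F_q^{2n}$. Having chosen $u_1,\dots,u_i$ spanning a totally isotropic $U_i$, the next vector must lie in $U_i^\perp\setminus U_i$. Since $\dim U_i^\perp=2n-i$, there are $q^{2n-i}-q^i$ choices. Dividing by $|\mathrm{GL}_k(\mathbb F_q)|=\prod_{i<k}(q^k-q^i)$ and pairing factors gives
\[
L_{n,k}(q)=\prod_{i=0}^{k-1}\frac{q^{2n-2i}-1}{q^{k-i}-1}.
\]
For $I_r^{(q)}(n)$, observe that an $r$-tuple is pairwise orthogonal if and only if its span $U$ is totally isotropic, because $\beta$ is bilinear and alternating. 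So partition the pairwise orthogonal tuples by their span $U$, with $\dim U=k\le\min(n,r)$. The number of $r$-tuples in $U$ that span $U$ equals the number of surjections $\mathbb F_q^r\to U$, which is $\prod_{i<k}(q^r-q^i)$. Summing over $U$ gives the formula for $I_r^{(q)}(n)$. The statements for $|\Comm_r(G)|$ and $P_r(G)$ then follow from Proposition~\ref{prop:class2-reduction} and Theorem~\ref{thm:qsymplectic-recursion}.

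\emph{Rigidity.} For (1)$\Rightarrow$(2), an isoclinism gives $G'\cong H'$, so $q=q'$. It also gives $|V|=|V'|$, so $q^{2n}=q'^{2n'}$ and hence $n=n'$. For (2)$\Rightarrow$(1), argue as in Theorem~\ref{thm:rank1-isoclinism}. Nondegenerate alternating $\mathbb F_q$-forms of equal dimension are isometric, and an $\mathbb F_q$-isometry composed with the identity $\mathbb F_q\to\mathbb F_q$ intertwines the commutator maps. The implications (2)$\Rightarrow$(4)$\Rightarrow$(3) follow from the closed formula.

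The main step is (3)$\Rightarrow$(2). Put $x=q^{-2n}$. From the formula,
\[
P_2=\frac{1}{q}+\Bigl(1-\frac1q\Bigr)x,
\qquad
P_3=q^{-6n}\bigl(q^{6n}+(q^{2n}-1)(q^3-1)+L_{n,2}(q^3-1)(q^3-q)\bigr).
\]
Equivalently, use the recursion with $r=3$, which gives $P_3=xP_2+q^{-2}(1-x)P_2(n-1)$. Eliminate $x$ using $x=(qP_2-1)/(q-1)$, and eliminate $P_2(n-1)=1/q+(1-1/q)q^2x$. This should produce the stated quadratic in $X=q$, with coefficients depending only on $(P_2,P_3)$. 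Then check that there is a unique positive root: the constant term is $P_2-1<0$, so if the leading coefficient $P_2-P_3$ is positive, the product of the roots is negative. It remains to verify $P_2>P_3$, which holds since $P_3\le P_2$ with strict inequality for non-abelian groups. Once $q$ is known, $x$ is determined, and hence $n$. I expect this elimination to be the main obstacle; I would verify it first on the case $n=1$ to fix the exact form.

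\emph{Generating function.} Write $q^{-2nr}\prod_{i<k}(q^r-q^i)$ as a signed sum of terms $c\,(q^{j-2n})^r$ with $j\in\{0,\dots,k\}$. Then each summand of $\mathcal P_G(z)$ is a geometric series in $zq^{j-2n}$. So the series is rational, with simple poles contained in $\{q^{2n-j}:0\le j\le n\}=\{q^n,\dots,q^{2n}\}$. The exponent $j=k$ appears only from the $k$-th summand, so for $j=n$ only $k=n$ contributes. Its coefficient is $L_{n,n}(q)\ne0$ (the leading coefficient of the product is $1$), so the pole at $q^n$ does not cancel.
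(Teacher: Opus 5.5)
Your proposal is correct and follows the paper's proof essentially step for step: ordered isotropic bases for $L_{n,k}(q)$, partition of isotropic tuples by their span with the surjection count, a symplectic-basis isometry for (2)$\Rightarrow$(1), and the same top-coefficient argument for the pole at $q^n$. The only variation is in (3)$\Rightarrow$(2), where you use the $r=3$ recursion and Vieta instead of reading $P_3=q^{-3}+(1-q^{-3})q^{-2n}$ off the closed formula and using monotonicity of $X^{-1}+X^{-2}$; your elimination does collapse to exactly that linear formula and hence to the stated quadratic. Discard your first displayed expression for $P_3$, which is wrong (the $k=0$ term is $1$, not $q^{6n}$, $L_{n,1}(q)=(q^{2n}-1)/(q-1)$, and the $k=3$ term is missing when $n\ge 3$), so it is not ``equivalent'' to the recursion route.
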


\begin{proof}
Let $\langle\cdot,\cdot\rangle$ be the nondegenerate alternating $\mathbb F_q$-bilinear form on
$V\cong \mathbb F_q^{2n}$ coming from the commutator map.

First we compute $L_{n,k}(q)$.
Let $\mathcal B_{n,k}(q)$ be the set of ordered $k$-tuples
\[
(u_1,\dots,u_k)\in V^k
\]
such that $u_1,\dots,u_k$ are linearly independent and span a totally isotropic subspace.
We count $\mathcal B_{n,k}(q)$ in two ways.

Choose such a tuple inductively.
After choosing $u_1,\dots,u_{j-1}$, their span
\[
U_{j-1}:=\langle u_1,\dots,u_{j-1}\rangle
\]
is totally isotropic of dimension $j-1$, hence $U_{j-1}\subseteq U_{j-1}^{\perp}$ and
\[
\dim_{\mathbb F_q}(U_{j-1}^{\perp})=2n-(j-1).
\]
To keep the enlarged span totally isotropic and linearly independent, the next vector must lie in
$U_{j-1}^{\perp}\setminus U_{j-1}$. Therefore the number of choices for $u_j$ is
\[
|U_{j-1}^{\perp}|-|U_{j-1}|=q^{2n-j+1}-q^{j-1}.
\]
Thus
\[
|\mathcal B_{n,k}(q)|
=
\prod_{j=1}^{k}\bigl(q^{2n-j+1}-q^{j-1}\bigr).
\]

On the other hand, each $k$-dimensional totally isotropic subspace has exactly
\[
\prod_{j=1}^{k}(q^k-q^{j-1})
\]
ordered bases. Hence
\[
L_{n,k}(q)
=
\frac{\prod_{j=1}^{k}(q^{2n-j+1}-q^{j-1})}
     {\prod_{j=1}^{k}(q^k-q^{j-1})}.
\]
Factoring out $q^{j-1}$ from numerator and denominator in each factor gives
\[
L_{n,k}(q)
=
\prod_{j=1}^{k}\frac{q^{2n-2j+2}-1}{q^{k-j+1}-1}
=
\prod_{i=0}^{k-1}\frac{q^{2n-2i}-1}{q^{k-i}-1},
\]
as claimed.

Now let
\[
N_r(V):=\bigl|\{(v_1,\dots,v_r)\in V^r:\ \langle v_i,v_j\rangle=0\ \text{for all }i<j\}\bigr|.
\]
By definition, $N_r(V)=I_r^{(q)}(n)$.
We count $N_r(V)$ by the dimension of the span of the tuple.

If $(v_1,\dots,v_r)$ is pairwise orthogonal, then
\[
U:=\langle v_1,\dots,v_r\rangle
\]
is a totally isotropic subspace of $V$.
Let $k:=\dim U$. Then necessarily $0\le k\le \min(n,r)$.
Conversely, fix a $k$-dimensional totally isotropic subspace $U\le V$.
The pairwise orthogonal $r$-tuples whose span is exactly $U$ are precisely the ordered $r$-tuples in $U^r$
that generate $U$.

Choose an $\mathbb F_q$-linear isomorphism $U\cong \mathbb F_q^k$.
Then ordered generating $r$-tuples of $U$ are in bijection with surjective linear maps
\[
T:\mathbb F_q^r\twoheadrightarrow \mathbb F_q^k,
\qquad
T(e_i)=v_i.
\]
The number of such surjections equals the number of rank-$k$ $k\times r$ matrices over $\mathbb F_q$.
Transposing, this is the number of injective linear maps
\[
\mathbb F_q^k\hookrightarrow \mathbb F_q^r,
\]
which is
\[
\prod_{i=0}^{k-1}(q^r-q^i):
\]
choose the image of the first basis vector in $q^r-1$ ways, the second outside its span in $q^r-q$ ways,
and so on.

Therefore, for each fixed $k$-dimensional totally isotropic subspace $U$, the number of pairwise orthogonal
$r$-tuples spanning $U$ is
\[
\prod_{i=0}^{k-1}(q^r-q^i).
\]
Summing over all such $U$ gives
\[
N_r(V)
=
\sum_{k=0}^{\min(n,r)}
L_{n,k}(q)\prod_{i=0}^{k-1}(q^r-q^i).
\]
Since $N_r(V)=I_r^{(q)}(n)$, this proves the formula for $I_r^{(q)}(n)$.

Now apply Proposition~\ref{prop:class2-reduction}. Because $|V|=q^{2n}$,
\[
|\Comm_r(G)|=|\Z(G)|^r\,N_r(V)
\]
and
\[
P_r(G)=\frac{N_r(V)}{|V|^r}=q^{-2nr}N_r(V),
\]
which yields the displayed formulas for $|\Comm_r(G)|$ and $P_r(G)$.
In particular, $P_r(G)$ depends only on $(q,n)$.

We now prove the rigidity statement.

Assume first that $G$ and $H$ are isoclinic.
Then $G'\cong H'$ as groups, so
\[
|G'|=|H'|,
\]
hence $q=q'$.
Also
\[
G/\Z(G)\cong H/\Z(H),
\]
so
\[
q^{2n}=|G:\Z(G)|=|H:\Z(H)|=(q')^{2n'}=q^{2n'}.
\]
Therefore $n=n'$, proving $(1)\Rightarrow(2)$.

Conversely, assume $(q,n)=(q',n')$.
Choose identifications
\[
G'\cong \mathbb F_q,\qquad H'\cong \mathbb F_q.
\]
Then $V_G:=G/\Z(G)$ and $V_H:=H/\Z(H)$ are both $2n$-dimensional symplectic spaces over $\mathbb F_q$.
Choose symplectic bases
\[
(e_1,\dots,e_n,f_1,\dots,f_n)\quad\text{for }V_G,
\]
\[
(e_1',\dots,e_n',f_1',\dots,f_n')\quad\text{for }V_H.
\]
The unique $\mathbb F_q$-linear map $\phi:V_G\to V_H$ sending $e_i\mapsto e_i'$ and $f_i\mapsto f_i'$
is an isometry of alternating forms.
Let $\psi:G'\to H'$ be the chosen identification.
Then
\[
\psi\circ \beta_G=\beta_H\circ(\phi\times\phi),
\]
so $G$ and $H$ are isoclinic. Thus $(2)\Rightarrow(1)$.

The implication $(2)\Rightarrow(4)$ is immediate from the closed formula for $P_r$,
and $(4)\Rightarrow(3)$ is trivial.

It remains to prove $(3)\Rightarrow(2)$.
Taking $r=2$ and $r=3$ in the closed formula yields
\[
P_2(G)=\frac1q+\Bigl(1-\frac1q\Bigr)q^{-2n},
\qquad
P_3(G)=\frac1{q^3}+\Bigl(1-\frac1{q^3}\Bigr)q^{-2n}.
\]
Set
\[
a:=q^{-2n}.
\]
Then
\[
P_2(G)=q^{-1}+(1-q^{-1})a,\qquad
P_3(G)=q^{-3}+(1-q^{-3})a.
\]
Subtracting and comparing with $1-P_2(G)$ gives
\[
\frac{P_2(G)-P_3(G)}{1-P_2(G)}
=
\frac{q^{-1}-q^{-3}}{1-q^{-1}}
=
q^{-1}+q^{-2}.
\]
Thus $q$ is determined uniquely by $P_2(G)$ and $P_3(G)$, because the function
$x\mapsto x^{-1}+x^{-2}$ is strictly decreasing on $(0,\infty)$.
Equivalently, eliminating $a$ yields the quadratic
\[
\bigl(P_2(G)-P_3(G)\bigr)X^2+\bigl(P_2(G)-1\bigr)X+\bigl(P_2(G)-1\bigr)=0,
\]
whose unique positive root is $X=q$.

Once $q$ is known, the formula for $P_2(G)$ gives
\[
a=q^{-2n}=\frac{qP_2(G)-1}{q-1}.
\]
Since the map $n\mapsto q^{-2n}$ is injective on $\mathbb Z_{\ge0}$, this determines $n$.
Hence equality of $P_2$ and $P_3$ for $G$ and $H$ forces $(q,n)=(q',n')$, proving $(3)\Rightarrow(2)$.
This completes the equivalence of (1)--(4).

Finally, for each fixed $k$ the factor
\[
\prod_{i=0}^{k-1}(q^r-q^i)
\]
is a polynomial in $q^r$ of degree $k$.
Therefore there exist coefficients $c_0,\dots,c_n\in \mathbb Q(q)$, depending only on $(q,n)$, such that
\[
P_r(G)=\sum_{j=0}^{n} c_j\,q^{-(2n-j)r}.
\]
Hence
\[
\mathcal P_G(z)
=
\sum_{r\ge2}P_r(G)z^{r-2}
=
\sum_{j=0}^{n} c_j\sum_{r\ge2}q^{-(2n-j)r}z^{r-2}
=
\sum_{j=0}^{n}\frac{c_j\,q^{-2(2n-j)}}{1-z/q^{\,2n-j}}.
\]
Thus $\mathcal P_G(z)$ is rational and its poles lie among
\[
q^n,q^{n+1},\dots,q^{2n}.
\]
Moreover, the term $k=n$ contributes
\[
L_{n,n}(q)\,q^{-2nr}\cdot q^{nr}=L_{n,n}(q)\,q^{-nr},
\]
and no term with $k<n$ produces $q^{-nr}$.
So the coefficient of $q^{-nr}$ is $L_{n,n}(q)>0$, and the pole at $z=q^n$ is present.
\end{proof}

\begin{corollary}[Low-rank specializations]\label{cor:heisenberg-low-rank}\label{cor:heisenberg-P2P3}\label{cor:heisenberg-P4}
Under the hypotheses of Theorem~\ref{thm:heisenberg-full-hierarchy},
\[
P_2(G)=\frac{1}{q}+\left(1-\frac{1}{q}\right)q^{-2n},
\qquad
P_3(G)=\frac{q^{2n}+q^3-1}{q^{2n+3}},
\]
and
\[
P_4(G)
=\frac{1}{q^6}+\frac{q^5+q^3-q^2-1}{q^6}\,q^{-2n}
+\frac{q^4-q^3-q+1}{q^4}\,q^{-4n}.
\]
\end{corollary}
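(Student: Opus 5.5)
The statement is a direct specialization of the closed hierarchy in Theorem~\ref{thm:heisenberg-full-hierarchy}. The plan is to substitute $r=2,3,4$ into
\[
P_r(G)=q^{-2nr}\sum_{k=0}^{\min(n,r)}L_{n,k}(q)\prod_{i=0}^{k-1}(q^r-q^i)
\]
and simplify. The only inputs are $L_{n,0}=1$ and
\[
L_{n,1}(q)=\frac{q^{2n}-1}{q-1},\qquad
L_{n,2}(q)=\frac{(q^{2n}-1)(q^{2n-2}-1)}{(q^2-1)(q-1)}.
\]
A cleaner alternative, which also serves as a cross-check, is to iterate the recursion of Theorem~\ref{thm:qsymplectic-recursion}, exactly as in the proofs of Corollaries~\ref{cor:orderp-P2P3} and~\ref{cor:orderp-P4} with $p$ replaced by $q$. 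Throughout, write $a:=q^{-2n}$.

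For $P_2$ and $P_3$, the terms $k\le1$ give $q^{-2nr}\bigl(1+(q^{2n}-1)(q^r-1)/(q-1)\bigr)$. For $r=2$ only $k\le 1$ contributes when $n=1$. In general, though, the $k=2$ term must be included, and it is easiest to use the recursion instead.

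\begin{itemize}
\item Since $P_1^{(q)}\equiv 1$, the recursion gives $P_2^{(q)}(n)=a+q^{-1}(1-a)$, which is the claimed formula.
\item Next, $P_3^{(q)}(n)=aP_2^{(q)}(n)+q^{-2}(1-a)P_2^{(q)}(n-1)$, with $P_2^{(q)}(n-1)=q^{-1}+(1-q^{-1})q^2a$. Expanding, the $a^2$ terms cancel: $a^2(1-q^{-1})-a^2(1-q^{-1})=0$. What remains is $q^{-3}+(1-q^{-3})a$, which equals $(q^{2n}+q^3-1)/q^{2n+3}$.
\end{itemize}

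This agrees with the $P_2,P_3$ expressions already used in the rigidity part of the proof of Theorem~\ref{thm:heisenberg-full-hierarchy}. For $n=0$ the formulas give $1$, consistent with the initial conditions.

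For $P_4$, use $P_4^{(q)}(n)=aP_3^{(q)}(n)+q^{-3}(1-a)P_3^{(q)}(n-1)$ with $P_3^{(q)}(n-1)=q^{-3}+(1-q^{-3})q^2a$. Expanding gives a quadratic polynomial in $a$:
\begin{itemize}
\item the constant term is $q^{-6}$;
\item the coefficient of $a$ is $q^{-3}+q^{-1}(1-q^{-3})-q^{-6}$, which equals $(q^5+q^3-q^2-1)/q^6$;
\item the coefficient of $a^2$ is $(1-q^{-3})-q^{-1}(1-q^{-3})=(1-q^{-1})(1-q^{-3})$, which equals $(q^4-q^3-q+1)/q^4$.
\end{itemize}
This matches the statement.

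The only real obstacle is bookkeeping in the $P_4$ expansion. To guard against an error, I would check the case $n=1$ against Corollary~\ref{cor:P4bound}, which predicts $(q^4+q^3-1)/q^7$ when $q=p$. I would also check the $n\to\infty$ limit $q^{-6}=q^{-\binom42}$, matching Remark~\ref{rem:symplectic-limit}. Since the recursion is identical in form to the $|G'|=p$ case, the computation is literally that of Corollary~\ref{cor:orderp-P4} with $p\mapsto q$.
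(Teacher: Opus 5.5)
Your argument is correct, but it takes a different route from the paper. The paper substitutes $r=2,3,4$ directly into the closed isotropic-subspace sum of Theorem~\ref{thm:heisenberg-full-hierarchy}. You instead iterate the recursion of Theorem~\ref{thm:qsymplectic-recursion}.

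Your route is exactly the $p\mapsto q$ version of the paper's own proofs of Corollaries~\ref{cor:orderp-P2P3} and~\ref{cor:orderp-P4}. Every quantity stays a polynomial in $a=q^{-2n}$, and passing to $n-1$ is just the shift $a\mapsto q^2a$. So the cancellation of the $a^2$ term in $P_3$ and the two coefficients of $P_4$ are visible in a couple of lines; your coefficients check out.

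The price is that this is an induction on $n$. The recursion is stated only for $n\ge 1$, so at $n=1$ you substitute the lower formulas at $n-1=0$. Your remark that the $P_2,P_3$ formulas reduce to $1$ at $n=0$ (matching $P_r^{(q)}(0)=1$) is precisely what justifies that step, so keep it explicitly in the write-up.

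The paper's route evaluates each $n$ directly with no induction. It is, however, heavier than your opening sentence suggests. For general $n$ one needs every $L_{n,k}$ with $k\le r$, so $L_{n,3}$ and $L_{n,4}$ as well, not only $k\le 2$. The stated forms also appear only after high powers of $q^{-2n}$ cancel. For $r=3$, for instance, the $q^{-4n}$ and $q^{-6n}$ terms cancel only once the $k=3$ term is included.
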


\begin{proof}
Substitute $r=2,3,4$ into the closed formula of Theorem~\ref{thm:heisenberg-full-hierarchy} and simplify.
\end{proof}

\begin{corollary}[Maximum-order abelian data in the Heisenberg family]\label{cor:heisenberg-maxabelian}
Under the hypotheses of Theorem~\ref{thm:heisenberg-full-hierarchy},
\[
m(G)=q^{n+1}
\qquad\text{and}\qquad
N_{\max}(G)=L_{n,n}(q)=\prod_{i=1}^{n}(q^i+1).
\]
\end{corollary}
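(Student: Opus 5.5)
The notation $m(G)$ and $N_{\max}(G)$ is presumably the maximum order of an abelian subgroup and the number of abelian subgroups attaining that order; I will use these meanings. The plan is to translate abelian subgroups of $G$ into isotropic subsets of $V=G/\Z(G)\cong\mathbb F_q^{2n}$, and then use the linear structure over $\mathbb F_q$.

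\emph{Step 1 (reduction to isotropic subspaces).} Let $\pi:G\to V$ be the quotient map. A subgroup $B\le G$ is abelian if and only if $\beta(\pi(x),\pi(y))=0$ for all $x,y\in B$; this is the criterion of Proposition~\ref{prop:class2-reduction}. Since $\Z(G)$ is central, $B\Z(G)$ is abelian whenever $B$ is. Hence every maximal abelian subgroup contains $\Z(G)$ and equals $\pi^{-1}(\pi(B))$. Its image $\pi(B)$ is an additive subgroup, i.e.\ an $\mathbb F_p$-subspace $S\le V$, with $\beta(S,S)=0$. The correspondence $S\mapsto\pi^{-1}(S)$ is then an order-preserving bijection between such $S$ and abelian subgroups containing $\Z(G)$. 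It satisfies $|\pi^{-1}(S)|=q\,|S|$.

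\emph{Step 2 (upper bound, the main obstacle).} One must show that an additively closed, $\beta$-isotropic set $S$ has $|S|\le q^n$, with equality only when $S$ is an $\mathbb F_q$-subspace. The difficulty is that $S$ is a priori only an $\mathbb F_p$-subspace. I would pass to its $\mathbb F_q$-span: by $\mathbb F_q$-bilinearity of $\beta$, $\beta(\lambda s,\mu t)=\lambda\mu\,\beta(s,t)=0$, so $\mathbb F_q S$ is again totally isotropic. It is also an $\mathbb F_q$-subspace, so by the standard symplectic bound $\dim U\le\dim U^\perp=2n-\dim U$ it has $\mathbb F_q$-dimension at most $n$. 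Thus $|S|\le|\mathbb F_q S|\le q^n$.

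\emph{Step 3 (maximal order and count).} Combining Steps 1 and 2 gives $m(G)\le q\cdot q^n=q^{n+1}$. Lagrangian subspaces exist (take the span of $e_1,\dots,e_n$ in a symplectic basis), so $m(G)=q^{n+1}$. An abelian subgroup of maximal order contains $\Z(G)$, since otherwise $B\Z(G)$ would be strictly larger. Its image $S$ then satisfies $|S|=q^n=|\mathbb F_qS|$, which forces $S=\mathbb F_q S$, a Lagrangian $\mathbb F_q$-subspace. Conversely, each Lagrangian subspace yields a maximal-order abelian subgroup. Hence $N_{\max}(G)=L_{n,n}(q)$.

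\emph{Step 4 (product formula).} Finally, evaluate the product formula of Theorem~\ref{thm:heisenberg-full-hierarchy} at $k=n$:
\[
L_{n,n}(q)=\prod_{i=0}^{n-1}\frac{q^{2(n-i)}-1}{q^{n-i}-1}=\prod_{j=1}^{n}(q^j+1).
\]
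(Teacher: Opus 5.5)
Your proposal is correct and follows essentially the same route as the paper. Both reduce to abelian subgroups containing $\Z(G)$, pass to the $\mathbb F_q$-span of the $\mathbb F_p$-isotropic image to get $|S|\le q^n$, and identify the equality case with Lagrangian $\mathbb F_q$-subspaces. Your explicit remarks that $|S|=|\mathbb F_qS|$ forces $S=\mathbb F_qS$, and that $L_{n,n}(q)=\prod_{j=1}^{n}(q^j+1)$ by setting $j=n-i$, just spell out steps the paper leaves implicit.
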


\begin{proof}
Let $A\le G$ be abelian. Since $G$ has class $2$, the subgroup $A\Z(G)$ is abelian and contains $A$.
Thus, when bounding the order of abelian subgroups, and in particular when counting maximum-order abelian subgroups, we may assume without loss of generality that $\Z(G)\le A$. Indeed, if $A$ has maximum possible order, then $A\Z(G)$ is abelian and $|A\Z(G)|\ge |A|$, so maximality of the order forces $A=A\Z(G)$.

Now set $W:=A/\Z(G)\le V:=G/\Z(G)$. Then $W$ is an $\mathbb F_p$-subspace of $V$ with $\beta(W,W)=0$.
Its $\mathbb F_q$-span $\mathbb F_qW$ is again totally isotropic, because $\beta$ is $\mathbb F_q$-bilinear.
Hence
\[
\dim_{\mathbb F_q}(\mathbb F_qW)\le n,
\]
so
\[
|W|=p^{\dim_{\mathbb F_p}W}\le p^{m\dim_{\mathbb F_q}(\mathbb F_qW)}\le p^{mn}=q^n.
\]
Therefore
\[
|A|=|\Z(G)|\,|W|\le q\cdot q^n=q^{n+1}.
\]
Equality holds exactly when $W$ is a Lagrangian $\mathbb F_q$-subspace of $V$, and distinct Lagrangians give distinct
abelian subgroups containing $\Z(G)$. Thus $m(G)=q^{n+1}$ and $N_{\max}(G)=L_{n,n}(q)$.
\end{proof}

\begin{remark}[Large rank limit]\label{rem:qsymplectic-limit}
For fixed $r$ and $q$, letting $n\to\infty$ in the recursion of Theorem~\ref{thm:qsymplectic-recursion} yields
\[
\lim_{n\to\infty} P_r^{(q)}(n)=q^{-\binom{r}{2}}.
\]
\end{remark}

\subsection{Rank two beyond Heisenberg type}\label{subsec:rank2-ses}

The $\mathbb F_{p^2}$-Heisenberg-type family gives one clean rank-two model where the commutator tensor is essentially
a symplectic form over a field extension.  However, already when $|G'|=p^2$ there are many other class-$2$ exponent-$p$
$p$-groups with ``maximally large'' commutator fibres.  A standard and widely studied condition is that every noncentral
element has full commutator image:
\begin{equation}\label{eq:full-contraction-rank}
x\notin \Z(G)\quad\Longrightarrow\quad [x,G]=G'.
\end{equation}
In the language of bilinear commutator maps, \eqref{eq:full-contraction-rank} says that every nonzero contraction has full
rank (``full contraction rank'').  Groups satisfying \eqref{eq:full-contraction-rank} are precisely the Camina $p$-groups of
nilpotency class $2$, equivalently the \emph{semi-extraspecial} $p$-groups; see Lewis's proof-bearing treatment of generalized Camina groups \cite{Lewis2009Camina} and his published survey \cite[Theorems~5.1--5.2]{Lewis2019Survey}.

\begin{proposition}[Full contraction rank forces uniform centralizers and determines $P_2$]\label{prop:fullcontraction-P2}
Let $p$ be a prime and let $G$ be a finite $p$-group of nilpotency class $2$ and exponent $p$.
Write $V:=G/\Z(G)$ and $W:=G'$, with $|V|=p^d$ and $|W|=p^m$.
Assume $G$ has full contraction rank in the sense of \eqref{eq:full-contraction-rank}.
Then every noncentral element has centralizer index $|W|$:
\[
x\notin \Z(G)\quad\Longrightarrow\quad |G:\C_G(x)|=|G'|=p^m,
\]
and the commuting probability satisfies the explicit formula
\begin{equation}\label{eq:P2-fullcontraction}
P_2(G)=p^{-d}+\bigl(1-p^{-d}\bigr)p^{-m}
=\frac{|\Z(G)|}{|G|}+\Bigl(1-\frac{|\Z(G)|}{|G|}\Bigr)\frac{1}{|G'|}.
\end{equation}
In particular, in the rank-two case $|G'|=p^2$, one has $P_2(G)=p^{-d}+(1-p^{-d})p^{-2}$.
\end{proposition}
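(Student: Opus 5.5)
The plan is to reduce everything to the linear-algebra model of Definition~\ref{def:class2-exp-p} and then apply Proposition~\ref{prop:P2-rankdistribution}. In the class-$2$ setting the commutator map $y\mapsto[x,y]$ is a group homomorphism $G\to G'$. This holds because $G'\le\Z(G)$, which gives $[x,y_1y_2]=[x,y_1][x,y_2]$. The kernel of this map is exactly $\C_G(x)$, and its image is $[x,G]$. The first isomorphism theorem therefore gives
\[
|G:\C_G(x)|=|[x,G]|.
\]
Under hypothesis \eqref{eq:full-contraction-rank}, every noncentral $x$ satisfies $[x,G]=G'$, so $|G:\C_G(x)|=|G'|=p^m$. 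This proves the first claim directly, without any counting.

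For the formula, we pass to $V=G/\Z(G)$ with $v=x\Z(G)$. The contraction $\beta_v:V\to W$ has image $[x,G]$, because commutators depend only on cosets modulo $\Z(G)$. Hence $\mathrm{rk}(\beta_v)=\dim W=m$ for every $v\ne0$, while $\mathrm{rk}(\beta_0)=0$. We are then exactly in the special case recorded at the end of Proposition~\ref{prop:P2-rankdistribution}, which yields
\[
P_2(G)=p^{-\dim V}+\bigl(1-p^{-\dim V}\bigr)p^{-\dim W}=p^{-d}+\bigl(1-p^{-d}\bigr)p^{-m}.
\]
As a cross-check, one can also argue directly: sum $|\C_G(x)|$ over central and noncentral $x$, exactly as in the proof of Theorem~\ref{thm:P2bound}. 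Writing $\alpha=|\Z(G)|/|G|$, this gives $P_2(G)=\alpha+(1-\alpha)p^{-m}$.

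The second expression in \eqref{eq:P2-fullcontraction} follows from the identities $p^{-d}=|\Z(G)|/|G|$ and $p^{-m}=1/|G'|$. The rank-two statement is the specialization $m=2$.

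There is one point where care is needed, and it is the main (mild) obstacle. The definition treats $V$ and $W$ as $\mathbb F_p$-spaces, and we must make sure that $p^d$ and $p^m$ really are the orders $|V|$ and $|W|$. We must also check that the image of $\beta_v$ coincides with $[x,G]$ as a set, which holds because $\beta_v$ is additive with image the subgroup $[x,G]$. Exponent $p$ makes $W=G'$ elementary abelian, and by the remark in Definition~\ref{def:class2-exp-p}, $V$ is also elementary abelian. Hence ranks are $\mathbb F_p$-dimensions, and $p^{\mathrm{rk}\,\beta_v}=|[x,G]|$. Nothing further is required.
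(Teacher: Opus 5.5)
Your proposal is correct and follows essentially the same route as the paper. Both identify $|G:\C_G(x)|$ with the size of the image of the contraction $\beta_v$: you use the homomorphism $y\mapsto[x,y]$ on $G$, while the paper uses $\C_G(x)/\Z(G)\cong\ker\beta_v$. Both then obtain $P_2$ by summing centralizer orders over central and noncentral elements, which is exactly what your appeal to Proposition~\ref{prop:P2-rankdistribution} (or your direct cross-check) amounts to.
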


\begin{proof}
Let $\beta:V\times V\to W$ be the alternating commutator map.
For $x\in G$ with image $v\in V$, the centralizer quotient satisfies
\[
\C_G(x)/\Z(G)\ \cong\ \{u\in V:\beta(v,u)=0\}=\ker(\beta_v),
\]
where $\beta_v:V\to W$ is the contraction $u\mapsto\beta(v,u)$.
If $x\notin\Z(G)$ then $v\neq0$, so by \eqref{eq:full-contraction-rank} the map $\beta_v$ is surjective.
Hence $|\ker(\beta_v)|=|V|/|W|=p^{d-m}$, i.e.\ $|\C_G(x)|=|\Z(G)|\,p^{d-m}=|G|/|W|$, proving the displayed index formula.

Finally,
\[
P_2(G)=\frac{1}{|G|^2}\sum_{x\in G}|\C_G(x)|
=\frac{|\Z(G)|\cdot |G|+\bigl(|G|-|\Z(G)|\bigr)\cdot |G|/|W|}{|G|^2}
=\frac{|\Z(G)|}{|G|}+\Bigl(1-\frac{|\Z(G)|}{|G|}\Bigr)\frac{1}{|W|},
\]
which is \eqref{eq:P2-fullcontraction}.
\end{proof}

\begin{theorem}[Small-order rank-two rigidity in the semi-extraspecial family (literature)]\label{thm:ses-rank2-smallorders}
Let $p$ be a prime.  Among semi-extraspecial $p$-groups with $|G'|=p^2$ (equivalently, Camina $p$-groups of class $2$),
there is a \emph{unique} isoclinism class of groups of order $p^6$, and also a unique isoclinism class of groups of order $p^8$.
\end{theorem}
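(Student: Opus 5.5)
The plan is to turn the statement into a classification of pencils of alternating forms, and then use the Kronecker--Scharlau normal form for such pencils. For a semi-extraspecial $p$-group one has $\Z(G)=G'$. Both $V:=G/\Z(G)$ and $W:=G'$ are elementary abelian; for $W$ this uses $[x,y]^p=[x^p,y]$ and $x^p\in\Z(G)$, and it does not require exponent $p$. So, exactly as in Definition~\ref{def:isoclinism} and the proof of Theorem~\ref{thm:rank1-isoclinism}, the isoclinism class of $G$ is the orbit of its commutator map $\beta:\Lambda^2V\to W$ under $\mathrm{GL}(V)\times\mathrm{GL}(W)$. With $|W|=p^2$, order $p^6$ means $\dim V=4$ and order $p^8$ means $\dim V=6$. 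Existence in both cases is supplied by the $\mathbb F_{p^2}$-Heisenberg type of Definition~\ref{def:heisenberg-type} (rank $1$) and a cubic-field analogue. I therefore only need to prove uniqueness.

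Step 1 (translation). I choose a basis of $W^*$ to write $\beta=(f_1,f_2)$, a pencil of alternating forms on $V$. Condition~\eqref{eq:full-contraction-rank} fails exactly when there are $v\ne0$ and $(a,b)\ne0$ with $v\in\mathrm{rad}(af_1+bf_2)$. Hence full contraction rank is equivalent to every nonzero member $af_1+bf_2$ being nondegenerate. In terms of the Pfaffian, the binary form $\mathrm{Pf}(af_1+bf_2)$ of degree $\tfrac12\dim V$ has no zero on $\mathbb P^1(\mathbb F_p)$. Changing the basis of $W$ acts on the pencil through $\mathrm{GL}_2(\mathbb F_p)$, i.e.\ by Möbius transformations on this binary form.

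Step 2 (normal form). Since $f_1$ is nondegenerate, I set $T:=f_1^{-1}f_2\in\mathrm{End}(V)$, which is self-adjoint for $f_1$. The standard fact for symplectic pairs (Scharlau) is that the pair $(f_1,f_2)$ is determined up to $\mathrm{GL}(V)$ by the similarity class of $T$. Moreover, the elementary divisors of $T$ occur in pairs, so $\chi_T=g^2$ with $\deg g=\tfrac12\dim V$ and $g=\pm\mathrm{Pf}$. The no-root condition makes $g$ root-free over $\mathbb F_p$, and since $\deg g\in\{2,3\}$ this means $g$ is irreducible. The pairing of elementary divisors then forces them to be exactly $g,g$. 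So $T$ acts as a scalar in $\mathbb F_p[X]/(g)$, and the pencil is the $\mathbb F_{p^k}$-symplectic plane, $k=\deg g$, viewed over $\mathbb F_p$. Consequently, for fixed $(f_1,f_2)$-type, the $\mathrm{GL}(V)$-class depends only on $g$.

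Step 3 (removing the dependence on $g$). It remains to show that $\mathrm{PGL}_2(\mathbb F_p)$ is transitive on the relevant $g$.
\begin{itemize}
\item For $\deg g=2$: $\mathrm{PGL}_2(\mathbb F_p)$ acts transitively on $\mathbb P^1(\mathbb F_{p^2})\setminus\mathbb P^1(\mathbb F_p)$, hence on irreducible quadratics.
\item For $\deg g=3$: a nonidentity element of $\mathrm{PGL}_2$ fixes at most $2$ points of $\mathbb P^1$, so any stabilizer of a degree-$3$ point would fix all three of its Galois conjugates and is trivial. Hence the action on the $p^3-p$ points of $\mathbb P^1(\mathbb F_{p^3})\setminus\mathbb P^1(\mathbb F_p)$ is free, and comparing with $|\mathrm{PGL}_2(\mathbb F_p)|=p^3-p$ shows it is transitive. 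This gives transitivity on irreducible cubics.
\end{itemize}
Combining Steps 2 and 3, any two such commutator maps are equivalent under $\mathrm{GL}(V)\times\mathrm{GL}(W)$. By the opening reduction the groups are then isoclinic, which is the claimed uniqueness.

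The main obstacle is Step 2: I must justify the normal-form theorem for pairs of alternating forms over a finite field, and in particular the "paired elementary divisors" fact and that $T$ determines the pencil. I would prove it directly rather than quote it. Since $T$ is $f_1$-self-adjoint, $V$ becomes an $\mathbb F_p[T]$-module on which $f_1$ is an invariant alternating form. When $g$ is irreducible, I would first show $g(T)=0$ by a dimension count on the primary decomposition. Then $V$ is a $2$-dimensional $\mathbb F_{p^k}$-space, and $f_1$ is the trace of a unique $\mathbb F_{p^k}$-alternating form. Uniqueness of nondegenerate alternating forms over $\mathbb F_{p^k}$ then finishes the step. The case $\dim V=6$ needs care that no elementary divisor $g^2$ occurs, which the pairing rules out by degree.
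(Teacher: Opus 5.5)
Once you quote Scharlau's classification of pairs of alternating forms, your argument is correct. It is also a genuinely different route from the paper, which gives no argument of its own: it cites Verardi's classification (as recorded by Lewis--Maglione, \S1, and Yadav, Lemma~5.19) for order $p^6$, and Lewis--Maglione's Theorem~1.1 for order $p^8$. Your route runs as follows.
\begin{enumerate}
\item Reduce isoclinism to $\mathrm{GL}(V)\times\mathrm{GL}(W)$-equivalence of the commutator pencil $(f_1,f_2)$.
\item Translate full contraction rank into the Pfaffian having no $\mathbb F_p$-rational zero. Since its degree is $2$ or $3$, it is then irreducible.
\item Use Scharlau's normal form to see that the $\mathrm{GL}(V)$-class of the pencil depends only on $g$.
\item Finish with transitivity of $\mathrm{PGL}_2(\mathbb F_p)$ on points of exact degree $2$ or $3$. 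Your freeness-plus-counting argument in degree $3$ is correct and clean.
\end{enumerate}
Your route stays inside the commutator-tensor framework of Section~\ref{sec:pgroup_orderp}. It identifies the unique class explicitly as the trace of a symplectic plane over $\mathbb F_{p^2}$, respectively over $\mathbb F_{p^3}$. The latter is a rank-$2$ quotient of the $\mathbb F_{p^3}$-Heisenberg pencil and is not itself of $\mathbb F_q$-Heisenberg type, since $4n=6$ is impossible. It also explains why uniqueness breaks at order $p^{10}$. There a root-free quartic Pfaffian can be the square of an irreducible quadratic $s$, and the elementary-divisor patterns $s^2,s^2$ and $s,s,s,s$ give two non-isoclinic classes with the same Pfaffian. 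The citation route buys brevity and independence from Scharlau.

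The step that would not work as you describe it is your planned direct proof that $g(T)=0$. A dimension count cannot exclude the cyclic module $\mathbb F_p[X]/(g^2)$, which has the same dimension $2k$ as $(\mathbb F_p[X]/(g))^2$. Your closing appeal to ``the pairing'' is circular if the pairing is what you are proving.

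The missing ingredient is that every form $f_1(u,h(T)v)$ is alternating. This holds because $f_1(v,T^{2j}v)=f_1(T^jv,T^jv)=0$ and $f_1(v,T^{2j+1}v)=f_2(T^jv,T^jv)=0$. In characteristic $2$ this genuinely uses that $f_2$ is alternating, since self-adjointness of $T$ only makes $f_2$ symmetric there.

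With this in hand, suppose $g(T)\neq 0$. Then $V\cong\mathbb F_p[X]/(g^2)$, and $f_1(u,g(T)v)$ descends to a nondegenerate alternating form on $V/\ker g(T)\cong K:=\mathbb F_p[X]/(g)$. This form is $K$-balanced, hence equal to $\psi(xy)$ with $\psi(x^2)=0$ for all $x\in K$. Polarization for odd $p$, or surjectivity of Frobenius for $p=2$, forces $\psi=0$, a contradiction. For $k=3$ the odd parity of $\dim_{\mathbb F_p}K$ already gives the contradiction, so it is the case $\dim V=4$, not $\dim V=6$, that needs this argument.

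The same fact gives $\mathrm{Tr}(cF(v,v))=f_1(cv,v)=0$ for all $c\in K$. You need this to know that the $K$-bilinear lift $F$ of $f_1$ is alternating.

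Finally, for $p=2$ existence cannot come from Definition~\ref{def:heisenberg-type}, which assumes $p$ odd. Use $\mathrm{UT}_3(\mathbb F_4)$, and the quotient of $\mathrm{UT}_3(\mathbb F_8)$ by a central subgroup of order $2$, instead.
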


\begin{proof}
For $|G|=p^6$ this follows from a slight extension of Verardi's classification, as recorded in
\cite[\S1]{LewisMaglione2018}; see also \cite[Lemma~5.19]{Yadav2005}.
For $|G|=p^8$ this is Theorem~1.1 of \cite{LewisMaglione2018}.
\end{proof}

\begin{remark}[Beyond $p^8$: many isoclinism classes]\label{rem:ses-many}
The uniqueness in Theorem~\ref{thm:ses-rank2-smallorders} breaks quickly: already at order $p^{10}$ there are
$p+3-\gcd(2,p)$ isoclinism classes of semi-extraspecial groups with derived subgroup of order $p^2$
\cite[Theorem~1.2]{LewisMaglione2018}, and the number grows rapidly with the order \cite[Theorem~1.3]{LewisMaglione2018}.
This illustrates that rank-two commutator tensors admit substantial moduli beyond the genus-$1$ (Heisenberg-type) situation
captured in Theorem~\ref{thm:heisenberg-full-hierarchy}.
\end{remark}

\section{Conclusion}

The hierarchy $\{P_r(G)\}_{r\ge2}$ has a rigid structural side. At the low-rank end, the exact class-number formulas for $P_3(G)$ and $P_4(G)$ recover the untwisted Drinfeld-double and quantum-triple simple counts, understood here purely at the level of simple counts. Exact prime-index formulas then force strong restrictions near the top of the hierarchy, the known universal bound isolates the extremal isoclinism class $G/Z(G)\cong C_p\times C_p$, and the stability gap near $11/32$ shows that commuting triples retain meaningful quantitative rigidity away from the top value.

In class-$2$ exponent-$p$ groups, the symplectic reduction turns higher commutativity into a problem about isotropic tuples. This produces closed formulas and classification consequences ranging from the rank-one commutator case to the full $\mathbb F_q$-Heisenberg family, where the pair $(P_2(G),P_3(G))$ already determines the isoclinism class. A natural next step is to push these rigidity phenomena beyond Heisenberg type, especially in rank two and semi-extraspecial settings, and to understand how much of the full hierarchy survives under broader isoclinism-preserving deformations.

\end{document}